\documentclass[]{amsart}

\usepackage{cite}

\makeatletter
\def\@tocline#1#2#3#4#5#6#7{\relax
  \ifnum #1>\c@tocdepth 
  \else
    \par \addpenalty\@secpenalty\addvspace{#2}%
    \begingroup \hyphenpenalty\@M
    \@ifempty{#4}{%
      \@tempdima\csname r@tocindent\number#1\endcsname\relax
    }{%
      \@tempdima#4\relax
    }%
    \parindent\z@ \leftskip#3\relax \advance\leftskip\@tempdima\relax
    \rightskip\@pnumwidth plus4em \parfillskip-\@pnumwidth
    #5\leavevmode\hskip-\@tempdima
      \ifcase #1
       \or\or \hskip 1em \or \hskip 2em \else \hskip 3em \fi%
      #6\nobreak\relax
    \hfill\hbox to\@pnumwidth{\@tocpagenum{#7}}\par
    \nobreak
    \endgroup
  \fi}
\makeatother

\usepackage{amsmath}
\usepackage{amsfonts}
\usepackage{amssymb}
\usepackage{amsthm}
\usepackage{enumitem}
\usepackage{bm}
\usepackage{hyperref}
\usepackage{tikz}
\usetikzlibrary{positioning}
\usetikzlibrary{arrows}
\usepackage{tkz-berge}
\usepackage{tkz-graph}
\usepackage{tikz-cd}

\usepackage{rotating}
\usepackage{color}

\newtheorem{theorem}{Theorem}
\newtheorem{prop}[theorem]{Proposition}

\newtheorem{corollary}[theorem]{Corollary}

\newtheorem{lemma}[theorem]{Lemma}
\newtheorem{conjecture}[theorem]{Conjecture}
\newtheorem{problem}[theorem]{Problem}
\theoremstyle{definition}
\newtheorem{definition}[theorem]{Definition}
\newtheorem{example}[theorem]{Example}

\theoremstyle{remark}
\newtheorem{remark}[theorem]{Remark}
\numberwithin{theorem}{section}
\numberwithin{equation}{section}

\theoremstyle{citeplain}
\newtheorem{citethm}[theorem]{Theorem}

\DeclareMathOperator{\rk}{rk}

\DeclareMathOperator{\Span}{Span}

\DeclareMathOperator{\Hom}{Hom}
\DeclareMathOperator{\Char}{Char}
\DeclareMathOperator{\Maps}{Maps}
\DeclareMathOperator{\conv}{conv}
\DeclareMathOperator{\Hilb}{Hilb}
\DeclareMathOperator{\Cone}{Cone}

\DeclareMathOperator{\Sym}{Sym}
\DeclareMathOperator{\Spec}{Spec}

\def\AA{\mathbb{A}}
\def\CC{\mathbb{C}}
\def\QQ{\mathbb{Q}}
\def\RR{\mathbb{R}}
\def\ZZ{\mathbb{Z}}
\def\PP{\mathbb{P}}

\def\SS{\mathbb{S}}
\def\Z+{\mathbb{Z}_{\geq 0}}
\def\R+{\mathbb{R}_{\geq 0}}
\def\F{\CC}

\def\bk{\mathbf{k}}
\def\bu{\mathbf{u}}
\def\bv{\mathbf{v}}
\def\ba{\mathbf{a}}
\def\bt{\mathbf{t}}
\def\bb{\mathbf{b}}
\def\be{\mathbf{e}}
\def\pM{\mathcal{M}}
\def\pN{\mathcal{N}}

\newcommand\restr[2]{{
		\left.\kern-\nulldelimiterspace 
		#1 
		\vphantom{\big|} 
		\right|_{#2} 
}}

\def\K{\mathbb{C}}
\def\F{\mathbb{F}}

\title{Flag matroids: algebra and geometry}
\author{Amanda Cameron, Rodica Dinu, Mateusz Micha{\l}ek and Tim Seynnaeve}

\address{AMANDA CAMERON, MAX PLANCK INSTITUTE FOR MATHEMATICS IN THE SCIENCES, INSELSTR. 22, 04103, LEIPZIG, GERMANY, AND DEPARTMENT OF MATHEMATICS AND COMPUTER SCIENCE, EINDHOVEN UNIVERSITY OF TECHNOLOGY, PO BOX 513, 5600 MB EINDHOVEN, NETHERLANDS}
\email{a.r.cameron@tue.nl}

\address{RODICA DINU, FACULTY OF MATHEMATICS AND COMPUTER SCIENCE, UNIVERSITY OF BUCHAREST, STR. ACADEMIEI 14, 010014 BUCHAREST, ROMANIA}
\email{rdinu@fmi.unibuc.ro}

\address{MATEUSZ MICHA{\L}EK, MAX PLANCK INSTITUTE FOR MATHEMATICS IN THE SCIENCES, INSELSTR. 22, 04103, LEIPZIG, GERMANY, AND INSTITUTE OF MATHEMATICS OF THE POLISH ACADEMY OF SCIENCES, UL. \`{S}NIADECKICH 8, 00-656 WARSZAWA, POLAND}
\email{Mateusz.Michalek@mis.mpg.de}

\address{TIM SEYNNAEVE, MAX PLANCK INSTITUTE FOR MATHEMATICS IN THE SCIENCES, INSELSTR. 22, 04103, LEIPZIG, GERMANY}
\email{Tim.Seynnaeve@mis.mpg.de}
\thanks{ The first author was supported by the ERC consolidator grant 617951. Most of the work in this paper was done while the author was employed by the Max Planck Institute. This research was performed while the second author was visiting Max Planck Institute, May-June and September, 2018. The third author was supported by the Polish National
Science  Centre  grant  no.~2015/19/D/ST1/01180}
\subjclass[2010]{05B35, 52B40, 14M15, 14M25, 19E08}
\keywords{Flag matroids, Tutte polynomial, torus orbits, Grassmannians, K-theory of flag varieties }
\begin{document}
\begin{abstract}
Matroids are ubiquitous in modern combinatorics. As discovered by Gelfand, Goresky, MacPherson and Serganova there is a beautiful connection between matroid theory and the geometry of Grassmannians: realizable matroids correspond to torus orbits in Grassmannians. Further, as observed by Fink and Speyer general matroids correspond to classes in the $K$-theory of Grassmannians. This yields in particular a geometric description of the Tutte polynomial.
In this review we describe all these constructions in detail, and moreover we generalise some of them to polymatroids. More precisely, we study the class of flag matroids and their relations to flag varieties. In this way, we obtain an analogue of the Tutte polynomial for flag matroids. 
\end{abstract}

\maketitle
\tableofcontents
\section{Introduction}
The aim of this article is to present beautiful interactions among matroids and algebraic varieties. Apart from discussing classical results, we focus on a special class of polymatroids known
as \emph{flag matroids}. The ultimate result is a definition of a \emph{Tutte polynomial} for flag matroids. Our construction is geometric in nature and follows the ideas of Fink and Speyer for ordinary matroids \cite{FinkSpeyer}. The audience we are aiming at is the union of combinatorists, algebraists and algebraic geometers, not the intersection.

Matroids are nowadays central objects in combinatorics. Just as groups abstract the notion of symmetry, matroids abstract the notion of \emph{independence}. The interplay of matroids and geometry is in fact already a classical subject \cite{GGMS}. Just one of such interactions (central for our article) is the following set of associations:

\begin{center}
matroids/flag matroids/polymatroids $\rightarrow$ lattice polytopes $\rightarrow$ toric varieties.
\end{center}

We describe these constructions in detail. They allow to translate results in combinatorics to and from algebraic geometry. As an example we discuss two ideas due to White:
\begin{itemize}
\item combinatorics of basis covers translates to projective normality of (all maximal) torus orbit closures in arbitrary Grassmannians - Theorem \ref{thm:White},
\item White's conjecture about basis exchanges (Conjecture \ref{conj:White}) is a statement about quadratic generation of ideals of toric subvarieties of Grassmannians.
\end{itemize}
Although the idea to study a matroid through the associated lattice polytope is certainly present already in the works of White and Edmonds, the importance of this approach was only fully discovered by Gelfand, Goresky, MacPherson and Serganova \cite{GGMS}. The construction of associating a toric variety to a lattice polytope can be found in many sources, we refer the reader e.g. to~\cite{Fulton, sturmfels1996grobner, Cox}.

The object we focus on is one of the main invariants of a matroid: the Tutte polynomial. It is an inhomogeneous polynomial in two variables. On the geometric side it may be interpreted as a cohomology class (or a class in $K$-theory or in Chow ring) in a product of two projective spaces.

The applications of algebro-geometric methods are currently flourishing. A beautiful result of Huh confirming a conjecture by Read on unimodality  of chromatic polynomials of graphs is based on Lefschetz theorems \cite{huh2012milnor}. This lead further to a proof of the general Rota-Heron-Welsh conjecture \cite{AdiprasitoHuhKatz} -- which we state in Theorem \ref{thm:AHK}. Although the latter proof is combinatorial in nature, the authors were inspired by geometry, in particular Lefschetz properties.
We would like to stress that the varieties and Chow rings studied by Adiprasito, Huh and Katz are not the same as those we introduce in this article. Still, as the focus of both is related to the Tutte polynomial it would be very interesting to know if their results can be viewed in the setting discussed here.

We finish the article with a few open questions. As the construction of the Tutte polynomial we propose is quite involved it would be very nice to know more direct, combinatorial properties and definitions.


\subsection{Notation}
$E$ will always denote a \emph{finite} set  of cardinality $n$. $\mathcal{P}(E)$ is the set of all subsets of $E$, and ${E\choose{k}}$ is the set of all subsets of $E$ of cardinality $k$.

We use $[n]$ as a shorthand notation for the set $\{1,2,\ldots,n\}$.

We will denote the difference of two sets $X$ and $Y$ by $X-Y$. This does not imply that $Y \subseteq X$. If $Y$ is a singleton $\{e\}$, we write $X-e$ instead of $X-\{e\}$.

\section{Matroids: combinatorics}
For a comprehensive monograph on matroids we refer the reader to \cite{oxley}.
\subsection{Introduction to matroids}
There exist many cryptomorphic definitions of a matroid -- it can be defined in terms of its independent sets, or its rank function, or its dependent sets, amongst others. One of the most relevant definitions for us is that of the rank function: 

\begin{definition}
 \label{rankaxioms}
A matroid $M=(E,r)$ consists of a ground set $E$ and a \emph{rank function} $r:\mathcal{P}(E)\rightarrow \Z+$ such that, for $X,Y\in\mathcal{P}(E)$, the following conditions hold:
\begin{itemize}
\item[{\rm R1}.] $r(X)\leq |X|$,
\item[{\rm R2}.] \emph{(monotonicity)} if $Y\subseteq X$, then $r(Y)\leq r(X)$, and
\item[{\rm R3}.] \emph{(submodularity)} $r(X\cup Y)+r(X\cap Y)\leq r(X)+r(Y)$.
\end{itemize}
\end{definition}

We write $r(M)$ for $r(E)$. When $r(X)=|X|$, we say that $X$ is \emph{independent}, and \emph{dependent} otherwise. A minimal dependent set is called a \emph{circuit}. A matroid is \emph{connected} if and only if any two elements are contained in a common circuit. It can be shown 
that ``being contained in a common circuit" is an equivalence relation on $E$; the equivalence classes are called \emph{connected components}.

If $|X|=r(X)=r(M)$ we call $X$ a \emph{basis} of $M$.
We can use bases to provide an alternative set of axioms with which to define a matroid. We present this as a lemma, but it can just as well be given as the definition. The diligent reader can check that each set of axioms implies the other.
\begin{lemma}
	\label{matroid}
	A \emph{matroid} $M=(E,\mathcal{B})$ can be described by a set $E$ and a collection of subsets $\mathcal{B}\subseteq \mathcal{P}(E)$ such that:
	\begin{itemize}
		\item[{\rm B1}.] $\mathcal{B}\neq\varnothing$, and
		\item[{\rm B2}.] \emph{(basis exchange)} if $B_1,B_2\in\mathcal{B}$ and $e\in B_1-B_2$, there exists $f\in B_2-B_1$ such that $(B_1-e)\cup f\in\mathcal{B}$.
	\end{itemize}
\end{lemma}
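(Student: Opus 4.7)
The plan is to prove the equivalence by constructing a translation in each direction and checking that the axioms correspond. Starting from a matroid $(E,r)$ satisfying R1--R3, I would define $\mathcal{B}$ to be the collection of subsets $B\subseteq E$ with $|B|=r(B)=r(E)$, i.e.\ the maximal independent sets. Conversely, given $(E,\mathcal{B})$ satisfying B1--B2, I would define $r(X):=\max\{|I|:I\subseteq X,\ I\subseteq B\text{ for some }B\in\mathcal{B}\}$. At the end one checks that these two constructions are mutually inverse, which is a bookkeeping task once the axioms on each side are verified.

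For the direction R $\Rightarrow$ B, axiom B1 follows from R1 together with the greedy existence of a maximal independent subset of $E$. Before attacking B2, I would prove two auxiliary statements from R1--R3: (i) all maximal independent sets share a common cardinality $r(E)$, and (ii) an \emph{augmentation} principle, that any independent $I$ with $|I|<r(E)$ can be extended by some element not in $I$. Both follow by iterating submodularity on the sets $I\cup e$ for $e$ drawn from a witness of larger rank. For B2 itself, fix $B_1,B_2\in\mathcal{B}$ and $e\in B_1-B_2$, set $I=B_1-e$, and suppose for contradiction that no $f\in B_2-B_1$ satisfies $I\cup f\in\mathcal{B}$. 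Applying submodularity repeatedly to the pairs $(I\cup B',I\cup f)$ as $f$ ranges over $B_2-B_1$ forces $r(I\cup(B_2-B_1))=|I|$, which contradicts $r(B_2)=r(E)$ via monotonicity.

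For the direction B $\Rightarrow$ R, a first lemma is that all members of $\mathcal{B}$ have the same cardinality; this follows from B2 by induction on $|B_1-B_2|$. Axioms R1 and R2 are then immediate from the definition of $r$. The main obstacle, and the step I expect to be hardest, is R3 (submodularity). The route I would take is first to promote B2 to an \emph{independence augmentation} property --- if $I_1,I_2$ are contained in members of $\mathcal{B}$ with $|I_1|<|I_2|$, then some $e\in I_2-I_1$ satisfies $I_1\cup e\subseteq B$ for some $B\in\mathcal{B}$ --- by applying B2 carefully to bases containing $I_1$ and $I_2$ respectively while tracking the symmetric difference. Once augmentation is in hand, submodularity follows by the classical argument: take a maximal independent $J\subseteq X\cap Y$, extend it via augmentation to a maximal independent $J_{X\cup Y}\subseteq X\cup Y$, and observe that $J_{X\cup Y}\cap X$ and $J_{X\cup Y}\cap Y$ are independent of sizes at most $r(X)$ and $r(Y)$, while their intersection equals $J$ of size $r(X\cap Y)$; counting yields R3 immediately.
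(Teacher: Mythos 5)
The paper does not actually prove this lemma: immediately after stating it, it remarks that ``the diligent reader can check that each set of axioms implies the other,'' treating the rank--basis cryptomorphism as a standard fact. There is therefore no proof in the paper to compare against, and I can only assess your proposal on its own terms.

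On those terms, your plan is correct and is the standard textbook route. The dictionaries you set up (bases $:=$ maximal independent sets; $r(X):=$ maximum size of a subset of $X$ lying in some member of $\mathcal{B}$) are the usual ones, and verifying that they are mutually inverse does indeed reduce to the ingredients you list. For R~$\Rightarrow$~B, your iterated-submodularity argument is right: if $r(I\cup e)=|I|$ for every $e$ in some set $S$, then submodularity applied to $(I\cup F,\,I\cup e)$ as you accumulate $F\subseteq S$ forces $r(I\cup S)=|I|$; this gives both the augmentation/equicardinality facts and B2 by contradiction. For B~$\Rightarrow$~R, equicardinality by induction on $|B_1-B_2|$ works (the base case $B_1\subseteq B_2$ uses B2 with the roles of $B_1,B_2$ swapped to force $B_2-B_1=\varnothing$), and once you have independence augmentation, your counting argument for R3 is exactly right: extend a maximal independent $J\subseteq X\cap Y$ to a maximal independent $J'\subseteq X\cup Y$, note $J'\cap X\cap Y=J$ by maximality of $J$, and then $|J'\cap X|+|J'\cap Y|=|J|+|J'|$ gives submodularity. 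The one step you gloss over is promoting B2 to independence augmentation; the standard way to make ``tracking the symmetric difference'' precise is to fix a basis $B_2\supseteq I_2$, choose among bases $B\supseteq I_1$ one minimizing $|B-(I_1\cup B_2)|$, show via B2 that $B\subseteq I_1\cup B_2$, and then use equicardinality of bases to conclude $(I_2-I_1)\cap B\neq\varnothing$. That step is genuinely the crux and not mere bookkeeping, but it is exactly what your sketch is pointing at, so I do not see a gap---only a spot where you would need to write more.
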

A reader new to matroid theory should not be surprised by the borrowed terminology from linear algebra: matroids were presented as a generalisation of linear independence in vector spaces in the paper by Whitney \cite{whitney} initiating matroid theory. Matroids also have a lot in common with graphs, thus explaining even more of the terminology used. For instance, very important matroid operations are that of \emph{minors}. These are analogous to the graph operations of the same names. As there, deletion is very simple, while contraction requires a bit more work.

\begin{definition}[Deletion and Contraction]
\label{delete}\
\begin{itemize}
\item We can remove an element $e$ of a matroid $M=(E,r)$ by \emph{deleting} it. This yields a matroid $M\backslash e=(E-e,r_{M\backslash e})$, where $r_{M\backslash e}(X)=r_M(X)$ for all $X\subseteq E-e$.
\item We can also remove an element $e$ of a matroid $M=(E,r)$ by \emph{contracting} it. This gives a matroid $M/e=(E-e,r_{M/e})$ where $r_{M/e}(X)=r_M(X\cup e)-r_M(\{e\})$ for all $X\subseteq E-e$.
\end{itemize}
\end{definition}

\begin{remark}
	More generally, if $M=(E,r)$ is a matroid and $S$ is a subset of $E$, we can define the deletion $M\backslash S$ (resp.\ contraction $M/S$) by deleting (resp.\ contracting) the elements of $S$ one by one. We have that $r_{M\backslash S}(X)=r_M(X)$ for all $X\subseteq E-S$ and $r_{M/S}(X)=r_M(X\cup S)-r_M(S)$ for all $X\subseteq E-S$.
\end{remark}

We will now give two examples of classes of matroids which show exactly the relationship matroids have with linear algebra and graph theory. The first one plays a central role in our article. 



 \begin{definition}
 		Let $V$ be a vector space, and $\phi: E \to V$ a map that assigns to every element in $E$ a vector of $V$. For every subset $X$ of $E$, we define $r(X)$ to be the dimension of the linear span of $\phi(X)$, then $(E,r)$ is a matroid, which we say is \emph{representable}.
 \end{definition}
\begin{remark}
	Our definition differs slightly from the one found in literature: typically one identifies $E$ with $\phi(E)$. Our definition does not require $\phi$ to be injective; we can take the same vector several times.
	We also note that the matroid represented by $\phi: E \to V$ only depends on the underlying map $\phi: E \to \PP(V)$, assuming $\phi(E)\subset V\setminus\{0\}$. 
\end{remark}


If $V$ is defined over a field $\F$, we say that $M$ is \emph{$\F$-representable.}
We can describe the bases of a representable matroid: $X \subseteq E$ is a matroid basis  if and only if $\phi(X)$ is a vector space basis of the linear span of $\phi(E)$.
\begin{example}[The non-Pappus matroid] 
\label{eg:nonpappus}
	Here is an example of a non-representable matroid: consider the rank-3 matroid $R$ on $[9]$, whose bases are all 3-element subsets of $[9]$ except for the following:
	\begin{equation*}\label{eq:nonpappus}
		 \{1,2,3\} ,\{4,5,6\}, \{1,5,7\}, \{1,6,8\}, \{2,4,7\}, \{2,6,9\}, \{3,4,8\}, \{3,5,9\}.
	\end{equation*}
	 If $R$ were representable over a field $\F$, there would be a map $[9] \to \PP_{\F}^2: i \to p_i$ such that $p_i,p_j,p_k$ are collinear if and only if $\{i,j,k\}$ is not a basis of $R$. Now, the classical \emph{Pappus' Theorem} precisely says that this is impossible: if the non-bases are all collinear, then so are $p_7,p_8,p_9$.

\begin{figure}[h]
\begin{tikzpicture}
\node (1) at (0,-0.3) {1};
\node (2) at (1.75,-0.3) {2};
\node (3) at (4,-0.3) {3};
\node (4) at (0,2.3) {4};
\node (5) at (2,2.3) {5};
\node (6) at (4,2.3) {6};
\coordinate (a) at (0,0);
\coordinate (b) at (1.75,0);
\coordinate (c) at (4,0);
\coordinate (d) at (1,1);
\node (7) at (0.7,1) {7};
\coordinate (e) at (2,1);
\node (8) at (2,1.3) {8};
\coordinate (f) at (3,1);
\node (9) at (3.3,1) {9};
\coordinate (g) at (0,2);
\coordinate (h) at (2,2);
\coordinate (i) at (4,2);
\draw (a) -- (b) -- (c)--(f)--(h)--(i)--(f)--(b)--(d)--(g)--(h)--(d)--(a)--(e)--(i);
\draw (c)--(e)--(g);
\foreach \pt in {a,b,c,d,e,f,g,h,i} \fill[black] (\pt) circle (3pt);

\end{tikzpicture}
\caption{The non-Pappus matroid}
\end{figure}
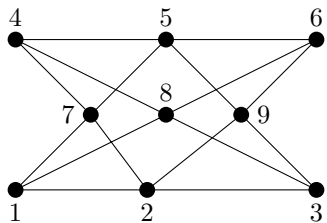
\end{example}

\begin{definition}
Let $G=(V,E)$ be a graph. The \emph{graphic} (or \emph{cycle}) matroid $M$ of $G$ is formed by taking $E(M)=E(G)$, and setting the rank of a set of edges equal to the cardinality of the largest spanning forest contained within it.
\end{definition}
Unfortunately, connectedness of the graph $G$ is not equivalent to connectedness of the matroid $M(G)$. This is obvious from looking at, for example, a tree.


\subsection{The Tutte polynomial}
Further matroid definitions will be given later, but we have covered enough to give the major object of our interest in this paper, namely the Tutte polynomial. This is the most famous matroid (and graph) invariant, and, like matroids themselves, has multiple definitions. These will be mentioned where relevant.  Here, we give the \emph{corank-nullity} formula, two terms which will be defined below.

\begin{definition}\label{dfn:Tutte}
Let $M=(E,r)$ be a matroid with ground set $E$ and rank function $r:\mathcal{P}(E)\rightarrow \Z+$. The \emph{Tutte polynomial} of $M$ is
\begin{displaymath}
    T_M(x,y) = \sum_{S\subseteq E} (x-1)^{r(M)-r(S)}(y-1)^{|S|-r(S)}.
\end{displaymath}
    \end{definition}

The term $r(M)-r(S)$ is called the \emph{corank} while the term $|S|-r(S)$ is called the \emph{nullity}. Readers familiar with matroid theory should be careful not to confuse a mention of corank with dual rank, given the usual naming convention of dual objects. By identifying the rank function of a matroid with the connectivity function of a graph in an appropriate way, one can pass between this formula and the original formulation of the Tutte polynomial which was given for graphs.
\begin{example}
	
	
	For the (matroid of the) complete graph $K_{4}$, there are four subsets with three elements of rank $2$ and all the others subsets with three elements have rank $3$. In this case, the Tutte polynomial is
	\begin{align*}
\begin{split}
T_{M(K_{4})}(x, y) & = x^{3}+3x^{2}+2x+4xy+2y+3y^{2} +y^{3}.
\end{split}
\end{align*}
\end{example}

Readers interested in seeing what the Tutte polynomial looks like for a range of different classes of matroids should consult \cite{tutteexamples}.

The prevalence of the Tutte polynomial in the literature is due to the wide range of applications it has. The simplest of these occurs when we evaluate the polynomial at certain points, these being called \emph{Tutte invariants}.  For instance, $T(1,1)$ gives the number of bases in the matroid (or the number of spanning trees in a graph). In this way we can also count the number of independent sets in a matroid or graph, and the number of acyclic orientations of a graph, as well as some other such quantities. Beyond numerics, the Tutte invariants also include other well-known polynomials, appearing in graph theory (the chromatic polynomial, concerned with graph colourings; see also Theorem \ref{thm:AHK}) and network theory (the flow and reliability polynomials). Extending to further disciplines, one can find multivariate versions of the Tutte polynomial which specialise to the Potts model \cite{potts} from statistical physics and the Jones polynomial \cite{jones} from knot theory. In this paper, we will be looking at the classical Tutte polynomial from an algebraic point of view.

We noted that there are multiple definitions of the Tutte polynomial. One is both so useful and attractive that we would be remiss to not include it. It states that, instead of calculating the full sum above, we can instead simply form a recurrence over minors of our matroid, which can lead to faster calculations. Note that a \emph{coloop} is an element of $E$ which is in every basis of $M$, while a \emph{loop} is an element which is in no basis.

\begin{lemma}[\kern-0.5em \cite{Brylawski1992}]\label{recurrance} Let $T_M(x,y)$ be the Tutte polynomial of a matroid $M=(E,r)$. Then the following statements hold.
\begin{enumerate}[label=\roman*.]
\item $T_M (x,y)=xT_{M/e}(x,y)$ if $e$ is a coloop.
\item $T_M(x,y)=yT_{M\backslash e}(x,y)$ if $e$ is a loop.
\item $T_M(x,y)=T_{M\backslash e}(x,y)+T_{M/e}(x,y)$ if $e$ is neither a loop nor a coloop.
\end{enumerate}
\end{lemma}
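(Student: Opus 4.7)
The plan is to prove all three parts uniformly by splitting the corank-nullity sum in Definition \ref{dfn:Tutte} according to whether the subset $S$ contains $e$ or not. Write
\[
T_M(x,y) \;=\; \sum_{\substack{S\subseteq E \\ e\notin S}} (x-1)^{r(M)-r(S)}(y-1)^{|S|-r(S)} \;+\; \sum_{\substack{S'\subseteq E-e}} (x-1)^{r(M)-r(S'\cup e)}(y-1)^{|S'\cup e|-r(S'\cup e)},
\]
where in the second sum I set $S = S'\cup e$. The goal is then to match each piece to a Tutte polynomial of $M\setminus e$ or $M/e$ by rewriting the rank exponents using Definition \ref{delete}.

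The first piece is easy: for $S\subseteq E-e$ we have $r_M(S)=r_{M\setminus e}(S)$, so it equals $T_{M\setminus e}(x,y)$ multiplied by $(x-1)^{r(M)-r(M\setminus e)}$ (which is $1$ unless $e$ is a coloop). The second piece is where the three cases diverge, and the crux is the behavior of $r_M(S'\cup e)$ as $S'$ varies over $E-e$. I would establish first the three identities
\[
r_{M/e}(S')=r_M(S'\cup e)-r_M(\{e\}), \qquad r(M)-r(M/e)=r_M(\{e\}), \qquad |S'\cup e|=|S'|+1,
\]
and then substitute to rewrite the second sum as $(x-1)^{r(M)-r(M/e)-r_M(\{e\})}(y-1)^{1-r_M(\{e\})}\,T_{M/e}(x,y)$.

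With these rewrites, the three cases become a short calculation.
\emph{Case (ii), $e$ a loop:} here $r_M(\{e\})=0$ and $r(M)=r(M\setminus e)$; the first piece is $T_{M\setminus e}$, the second piece is $(y-1)T_{M/e}$, and since $M/e$ and $M\setminus e$ coincide for a loop (both simply drop $e$ without touching any other rank) the total is $(1+(y-1))T_{M\setminus e}=yT_{M\setminus e}$.
\emph{Case (i), $e$ a coloop:} here $r_M(\{e\})=1$ and $r(M)=r(M\setminus e)+1$; the first piece becomes $(x-1)T_{M\setminus e}$, the second piece becomes $T_{M/e}$, and one checks that a coloop satisfies $M\setminus e=M/e$ (because $r_{M/e}(X)=r_M(X\cup e)-1=r_M(X)$ for all $X\subseteq E-e$, using the key fact below), giving the total $x\,T_{M/e}$.
\emph{Case (iii), $e$ neither:} here $r_M(\{e\})=1$ and $r(M)=r(M\setminus e)$, and the two pieces drop to $T_{M\setminus e}$ and $T_{M/e}$ respectively.

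The step I expect to be the main obstacle — or at least the one needing actual argument rather than pure bookkeeping — is the assertion in the coloop case that $r_M(S'\cup e)=r_M(S')+1$ for every $S'\subseteq E-e$, since a priori the definition of coloop only gives $r_M(E-e)=r_M(E)-1$. I would verify this by extending a maximal independent subset $B_{S'}$ of $S'$ to a basis $B$ of $M$: because $e$ is a coloop it lies in every basis, so $e\in B$, hence $B_{S'}\cup e$ is independent, proving $r_M(S'\cup e)\geq r_M(S')+1$; the reverse inequality is submodularity. Once this is in hand, the identification $M/e = M\setminus e$ for a coloop follows immediately, and all three parts are proved.
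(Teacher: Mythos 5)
The paper states this lemma with a citation to Brylawski and does not supply a proof, so there is no in-paper argument to compare against; your proof is correct and is the standard one: split the corank–nullity sum of Definition~\ref{dfn:Tutte} over subsets $S$ with $e\notin S$ versus $e\in S$, rewrite the exponents using the rank formulas for $M\setminus e$ and $M/e$, and handle the three cases by the elementary identities $r_M(\{e\})\in\{0,1\}$ and $r(M)-r(M\setminus e)\in\{0,1\}$. Two small remarks: the $(x-1)$ prefactor on the second piece, with exponent $r(M)-r(M/e)-r_M(\{e\})$, is identically zero (since $r(M/e)=r(M)-r_M(\{e\})$), so that factor could be dropped from the outset; and the coloop argument implicitly uses the standard fact that any independent set extends to a basis, which is equivalent to the rank axioms R1–R3 but worth flagging as the one nontrivial matroid input, as you correctly anticipated.
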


The Tutte polynomial is in fact universal for such formulae: any formula for matroids (or graphs) involving just deletions and contractions will be an evaluation of the Tutte polynomial.

\subsection{The base polytope} \label{sec:matroidBasePolytope}
We will now 
give two more axiom systems for matroids. The first one, via base polytopes, will play a fundamental role in this paper. 

We first define what the base polytope of a matroid is:
let the set of bases of a matroid $M=(E,r)$ be $\mathcal{B}$. We work in the vector space $\mathbb R^E=\{(r_i \ | \ i\in E)\}$, where $r_i\in\mathbb{R}$.
For a set $U\subseteq E$, ${\be}_U\in\mathbb R^E$ is the indicator vector of $U$, that is, ${\be}_U$ is the sum of the unit vectors ${\be}_i$ for all $i\in U$.  Note that ${\be}_{\{i\}} = {\be}_{i}$. 

\begin{definition}
	\label{basepolytope}
	The \emph{base polytope} of $M$ is $$P(M)=\conv\{{\be}_B \ | \ B\in\mathcal{B}\}.$$
\end{definition}

Note that this is always a lattice polytope.
The dimension of it is equal to $|E|$ minus the number of connected components of the matroid \cite[Proposition 2.4]{feichtner2005matroid}.
We also note that the vertices of $P(M)$ correspond to the bases of $M$. In particular: given $P(M)\subset \RR^{E}$, we can recover $M$.

The following theorem gives a characterisation of which lattice polytopes appear as the base polytope of a matroid. It can be used as an axiom system to define matroids:

\begin{theorem}[\kern-0.5em {\cite{edmonds}}, see also {\cite[Theorem 4.1]{GGMS}}]\label{thm:edges}
	A polytope $P \subset \mathbb{R}^E$ is the base polytope of matroid on $E$ if and only if the following two conditions hold:
	\begin{enumerate}
		\item[{\rm P1}.] every vertex of $P$ is a $0,1$-vector, and
		\item[{\rm P2}.] every edge of $P$ is parallel to $\be_i-\be_j$ for some $i,j \in E$.
	\end{enumerate}
\end{theorem}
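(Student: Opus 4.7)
My plan is to prove the two directions of the equivalence separately. The forward direction reduces to a symmetric-exchange argument at the midpoint of a would-be edge, while the reverse direction will proceed by a coordinate-wise analysis of the tangent cone of $P$ at a vertex.

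\emph{Necessity.} I would first observe that the vertices of $P(M) = \conv\{\be_B : B\in\mathcal B\}$ are exactly the $\be_B$, by a short extremality check: for any $g\notin B$, a convex decomposition of $\be_B$ can only use $\be_{B'}$ with $g\notin B'$, and since all bases have the same cardinality this forces $B'=B$. This gives P1. For P2, I would use the \emph{symmetric basis exchange property}, a standard strengthening of (B2): for any bases $B_1,B_2$ and any $e\in B_1\setminus B_2$ there exists $f\in B_2\setminus B_1$ with both $B_1' := (B_1-e)\cup f$ and $B_2' := (B_2-f)\cup e$ bases. Then $\be_{B_1'}+\be_{B_2'}=\be_{B_1}+\be_{B_2}$ gives the midpoint of $[\be_{B_1},\be_{B_2}]$ a second expression as the midpoint of $[\be_{B_1'},\be_{B_2'}]$. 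Whenever $|B_1\triangle B_2|\ge 4$, the pairs $\{B_1,B_2\}$ and $\{B_1',B_2'\}$ are disjoint, contradicting $[\be_{B_1},\be_{B_2}]$ being a face of $P(M)$; hence $|B_1\triangle B_2|=2$ and the edge is parallel to $\be_f-\be_e$.

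\emph{Sufficiency.} Setting $\mathcal B := \{B\subseteq E : \be_B\text{ is a vertex of }P\}$, I note that B1 is immediate and that all elements of $\mathcal B$ share a cardinality because each edge direction $\be_i-\be_j$ has zero coordinate-sum. To prove B2, I would fix $B_1,B_2\in\mathcal B$ and $e\in B_1\setminus B_2$, and work with the tangent cone of $P$ at $\be_{B_1}$. This cone is generated as a polyhedral cone by the edge directions at $\be_{B_1}$, each of which (by P2) has the form $\be_{j_k}-\be_{i_k}$ with $i_k\in B_1$, $j_k\notin B_1$, and $(B_1-i_k)\cup j_k\in\mathcal B$. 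Since $\be_{B_2}-\be_{B_1}$ lies in the tangent cone, I would write
\[
\be_{B_2}-\be_{B_1}=\sum_k a_k(\be_{j_k}-\be_{i_k}),\qquad a_k\ge 0,
\]
and read off two coordinates. For $g\notin B_1\cup B_2$ the left side is $0$ and the $i_k$-contribution on the right vanishes (since $i_k\in B_1\not\ni g$), giving $\sum_k a_k[j_k=g]=0$; hence $j_k\in B_1\cup B_2$ (equivalently $j_k\in B_2\setminus B_1$, since $j_k\notin B_1$) whenever $a_k>0$. For coordinate $e$, the $j_k$-contribution then vanishes because every relevant $j_k$ lies in $B_2$ and $e\notin B_2$, leaving $-1=-\sum_{k:i_k=e}a_k$. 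Some $k$ therefore has $a_k>0$ and $i_k=e$, and for that $k$, $f := j_k\in B_2\setminus B_1$ with $(B_1-e)\cup f\in\mathcal B$, which is exactly B2.

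\emph{Main obstacle.} The substantive step is this B2 verification. A simple face argument (some edge at $\be_{B_1}$ must exit $\{x_e=1\}$, since otherwise the tangent cone would lie in that hyperplane and $P\subseteq\{x_e=1\}$ would contradict $\be_{B_2}\in P$) produces \emph{some} edge of the form $\be_j-\be_e$ out of $\be_{B_1}$, but pinning down $j\in B_2\setminus B_1$ is what requires the two-coordinate comparison. Both hypotheses are essential here: P1 makes the cancellations for $g\notin B_1\cup B_2$ exact, and P2 provides the conic generators in the rigid form $\be_{j_k}-\be_{i_k}$ needed to translate the final equality into a basis exchange.
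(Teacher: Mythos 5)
Your proof is correct. The paper states this theorem with citations to Edmonds and to \cite[Theorem 4.1]{GGMS} but does not include its own proof, so there is nothing internal to compare against; your argument (symmetric basis exchange giving $|B_1\triangle B_2|=2$ for the necessity of P2, and the tangent-cone coordinate analysis yielding B2 in the sufficiency direction) is the standard one found in those references.
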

	More generally the description of faces of matroid base polytopes is provided in \cite{kim2010flag, feichtner2005matroid}.
The base polytope is a face of the \emph{independent set polytope} of $M$, which is the convex hull of indicator vectors of the independent sets of $M$.

\subsection{Definition via Gale orderings}
We move on to another axiom system: via Gale orderings.
This definition is orginally due to Gale \cite{gale}; our formulation is based on lecture notes by Reiner \cite{Reiner}.
\begin{definition}
	Let $\omega$ be a linear ordering on $E$, which we will denote by $\leq$.
	Then the \emph{dominance ordering} $\leq_{\omega}$ on ${E\choose{k}}$, also called \emph{Gale ordering}, is defined as follows.
	Let $A,B \in {E\choose{k}}$, where
	\[
	A=\{i_1,\ldots,i_k\}, i_1 < \ldots < i_k
	\]
	and
	\[
	B=\{j_1,\ldots,j_k\}, j_1 < \ldots < j_k \text{.}
	\]
	Then we set
	\[
	A \leq_{\omega} B \text{ if and only if } i_1 \leq j_1, \ldots, i_k \leq j_k \text{.}
	\]
\end{definition}
\begin{theorem}[Gale, \cite{gale}] Let $\mathcal{B} \subseteq {E\choose{k}}$, then $\mathcal{B}$ is the set of bases of a matroid if and only if for every linear ordering $\omega$ on $E$, the collection $\mathcal{B}$ has a maximal element under the Gale ordering $\leq_{\omega}$ (i.e.\ there is a unique member $A \in \mathcal{B}$ such that $B \leq_{\omega} A$ for all $B \in \mathcal{B}$).
\end{theorem}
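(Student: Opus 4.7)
The proof splits into two directions. For ($\Rightarrow$), I would identify the Gale-max with the output of the greedy algorithm; for ($\Leftarrow$), I would cook up a tailored linear order whose Gale-max is forced to witness basis exchange (the nonemptiness axiom B1 is automatic, since a Gale-max cannot exist in an empty family).

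For ($\Rightarrow$), fix $\omega$ with $e_1 <_\omega \cdots <_\omega e_n$ and run the matroid greedy algorithm top down: scan $e_n, e_{n-1}, \ldots$ and add each $e_i$ whenever the result can still be extended to a basis of $M$. Call the output $B^*$. To see that $B^*$ is the $\leq_\omega$-maximum, take any $B \in \mathcal{B}$ and suppose by contradiction that the $l$-th sorted $\omega$-rank of $B$ strictly exceeds that of $B^*$, for the largest such $l$. Then the top $k-l+1$ elements of $B$ (in $\omega$-order) form an independent set strictly larger than the set of $B^*$-elements of $\omega$-rank strictly above $b_l^*$. The augmentation consequence of B2 then produces an element that the greedy ought to have added before reaching $b_l^*$, contradicting the construction of $B^*$. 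Uniqueness is then automatic: two Gale-maxima dominate each other, hence share sorted rank sequences, hence coincide.

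For ($\Leftarrow$), given $B_1, B_2 \in \mathcal{B}$ and $e \in B_1 - B_2$, set $F := B_2 - B_1$ (so $|F| \geq 1$) and build $\omega$ by placing $B_1 - e$ at the very top of the order, then the elements of $F$, then $e$, then everything else at the bottom. Let $B^*$ be the hypothesized Gale-max. Reading $B^* \geq_\omega B_1$ position by position pins the top $k-1$ elements of $B^*$ to be exactly $B_1 - e$, leaving one remaining element $f$ whose $\omega$-rank is at least that of $e$. Then $B^* \geq_\omega B_2$ bumps $f$'s $\omega$-rank strictly above that of $e$, forcing $f \in F$. Hence $B^* = (B_1 - e) \cup f$ with $f \in B_2 - B_1$, which is precisely the basis exchange axiom B2. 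The principal obstacle is this last piece of bookkeeping: one must interleave $B_1 - e$, $F$, and $\{e\}$ in $\omega$ so that the two Gale-inequalities $B^* \geq_\omega B_1$ and $B^* \geq_\omega B_2$, read position by position, leave $B^*$ no freedom except to witness a basis exchange by an element of $F$; once the order is set up correctly, the conclusion drops out of a direct comparison of sorted rank sequences.
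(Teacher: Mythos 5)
The paper states this theorem with a citation to Gale's original article and does not include a proof, so there is no in-paper argument to compare against. Your proposal is correct and self-contained: the forward direction correctly identifies the Gale-maximum with the output of the greedy algorithm, and the choice of the \emph{largest} index $l$ at which $B$ beats $B^*$ (with elements sorted increasingly, as in the paper's definition) does make the counting work out — the top $k-l+1$ elements of $B$ are all $\omega$-above $b_l^*$, while only $k-l$ elements of $B^*$ are, so the augmentation property (a standard consequence of B2) yields an element $x>b_l^*$, $x\notin B^*$, such that $\{b\in B^*:b>x\}\cup\{x\}$ is independent, contradicting greedy's rejection of $x$. For the converse, your block ordering ``$B_1-e$ on top, then $F=B_2-B_1$, then $e$, then the rest'' is exactly the right gadget: comparing $B^*$ against $B_1$ coordinatewise forces $B^*\supseteq B_1-e$ with the remaining element in $\{e\}\cup F$, and comparing against $B_2$ in the bottom coordinate rules out $e$, so $B^*=(B_1-e)\cup f$ with $f\in B_2-B_1$, as B2 requires; B1 is indeed automatic since a maximum must be a member. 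The only cosmetic point worth tightening in a written-up version is to state explicitly which sort convention (increasing, matching the paper's Definition of $\leq_\omega$) you are using when you speak of the ``$l$-th sorted $\omega$-rank,'' since ``largest such $l$'' is the correct choice precisely under that convention.
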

As well as considering certain classes of matroids, it is interesting to look at natural generalisations or extensions of matroids as a whole. In Section \ref{subsec:FlagMatroids} we will introduce \emph{flag matroids}. They will be defined via Gale orderings, generalising the above characterisation of matroids. 
In this paper, they will arise quite naturally when generalising our geometric description of matroids given in Section \ref{sec:repmat}.

\subsection{The matroid union theorem}

Next we present one of the central theorems in matroid theory.
\begin{theorem}[The matroid union theorem]\label{thm:MUT}
Let $M_1,\dots,M_k$ be matroids on the same ground set $E$ with respective families of independent sets $\mathfrak{I}_1,\dots,\mathfrak{I}_k$ and rank functions $r_1,\dots,r_k$. Let $$\mathfrak{I}:=\{I\subset E: I=\bigcup_{i=1}^k I_i\text{ for }I_i\in\mathfrak{I}_i\}.$$
Then $\mathfrak{I}$ is also a family of independent sets for a matroid, known as the union of $M_1,\dots,M_k$. Further, the rank of any set $A\subset E$ for the union matroid is given by:
$$r(A)=\min_{B\subset A}\{|A\setminus B|+\sum_{i=1}^k r_i(B)\}.$$
\end{theorem}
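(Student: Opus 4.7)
The plan is to promote the proposed formula to a definition. Set
\[
\tilde r(A) := \min_{B \subseteq A}\Bigl\{|A \setminus B| + \sum_{i=1}^k r_i(B)\Bigr\} \qquad (A \subseteq E),
\]
verify that $\tilde r$ satisfies the rank axioms R1, R2, R3 of Definition \ref{rankaxioms}, and then show that the independent sets $\{A : \tilde r(A) = |A|\}$ of the resulting matroid are exactly $\mathfrak{I}$. Both halves of the theorem follow at once.

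The rank axioms are essentially formal. R1 holds by taking $B = \emptyset$. For monotonicity R2, if $A' \subseteq A$ and $B^*$ achieves the minimum defining $\tilde r(A)$, then $B^* \cap A'$ is admissible for $A'$; using monotonicity of each $r_i$ and $|A' \setminus (B^* \cap A')| \leq |A \setminus B^*|$ gives $\tilde r(A') \leq \tilde r(A)$. For submodularity R3, pick $B_X, B_Y$ achieving the minima for $X$ and $Y$. A short case analysis on whether each element of $X \cup Y$ lies in $B_X$ or $B_Y$ yields the identity
\[
|(X \cup Y) \setminus (B_X \cup B_Y)| + |(X \cap Y) \setminus (B_X \cap B_Y)| = |X \setminus B_X| + |Y \setminus B_Y|,
\]
and summing the submodularity inequalities $r_i(B_X \cup B_Y) + r_i(B_X \cap B_Y) \leq r_i(B_X) + r_i(B_Y)$ over $i$ and adding the displayed identity gives $\tilde r(X \cup Y) + \tilde r(X \cap Y) \leq \tilde r(X) + \tilde r(Y)$.

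The identification of independent sets is the substantive part. One direction is easy: if $A = \bigcup I_i$ with $I_i \in \mathfrak{I}_i$, then for any $B \subseteq A$ we have $B = \bigcup(I_i \cap B)$, so $|B| \leq \sum |I_i \cap B| \leq \sum r_i(B)$, whence $\tilde r(A) = |A|$. The converse is the main obstacle. I would argue by induction on $k$, grouping $M_1, \ldots, M_{k-1}$ into a single matroid via the inductive hypothesis to reduce to $k = 2$. For $k = 2$, take a maximal $A' \subseteq A$ admitting a decomposition $A' = I_1 \cup I_2$ with $I_i \in \mathfrak{I}_i$; if $A' \neq A$, pick $a \in A \setminus A'$ and build an \emph{exchange digraph} on $A$ whose arcs $x \to y$ record that $I_j$ can be modified by removing $x$ and adding $y$ while remaining in $\mathfrak{I}_j$ (exactly the swap licensed by axiom B2 of Lemma \ref{matroid}). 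Either a directed path from $a$ leads to an element not used in the current decomposition, producing a strictly larger one and contradicting the maximality of $A'$, or the set $B$ of vertices reachable from $a$ satisfies $r_1(B) + r_2(B) < |B|$, contradicting the assumption $\tilde r(A) = |A|$. The delicate step is this second implication: translating failure of augmentation into a genuine rank deficit. It hinges on showing that reachable elements form a set closed under the exchange relations of both matroids, so that every $I_j \cap B$ spans $B$ in $M_j$, producing the required inequality. This is where the bulk of the work lies; the rest is bookkeeping.
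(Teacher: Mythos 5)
The paper itself offers no proof of this theorem; after stating it, it refers the reader to Oxley's monograph, so there is no in-paper argument to compare against. Your verifications of R1--R3 and of the easy inclusion $\mathfrak{I}\subseteq\{A:\tilde r(A)=|A|\}$ are correct and complete: the element-by-element identity you invoke for R3 does hold (using $B_X\subseteq X$ and $B_Y\subseteq Y$), and the counting $|B|\leq\sum|I_i\cap B|\leq\sum r_i(B)$ gives the easy inclusion.

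The converse inclusion is where your argument is a plan rather than a proof, and you acknowledge this. Two points need attention. First, the reduction to $k=2$ by induction silently requires checking that the rank formula composes --- that $\min_{C\subseteq A}\{|A\setminus C|+r_{M'}(C)+r_k(C)\}$, with $M'$ the union of $M_1,\dots,M_{k-1}$, collapses to $\min_{B\subseteq A}\{|A\setminus B|+\sum_i r_i(B)\}$. This is true (take $C=B$ for one bound, use $r_k(C)\geq r_k(B)$ for the other), but it is a real step. Second, and more substantively: with arcs $x\to y$ meaning ``drop $x$ from $I_j$ and add $y$,'' the element $a\notin I_1\cup I_2$ has no outgoing arcs, so ``the set of vertices reachable from $a$'' is just $\{a\}$ and the argument stalls immediately. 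You want the opposite orientation, or paths terminating at $a$. This is not cosmetic: the closure property you yourself flag as delicate --- that each $I_j\cap B$ spans $B$ in $M_j$ --- is precisely what one proves by chasing arcs in the correct direction. As written, the augmenting-path step, which is the heart of the matter, is not established.
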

The proof can be found e.g.~in \cite[12.3.1]{oxley}. The following corollary is essentially due to Edmonds.
\begin{corollary}\label{cor:cover}
Let $M_1,\dots,M_k$ be matroids on a ground set $E$ with rank functions respectively $r_1,\dots,r_k$. $E$ can be partitioned into independent sets, one for each matroid, if and only if for all subsets $A\subset E$ we have $|A|\leq  \sum_{i=1}^k r_i(A)$.
\end{corollary}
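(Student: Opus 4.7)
The plan is to deduce this directly from the matroid union theorem (Theorem \ref{thm:MUT}). The key observation is that the existence of a partition $E = I_1 \sqcup \dots \sqcup I_k$ with $I_i$ independent in $M_i$ is equivalent to $E$ itself being independent in the union matroid $M := M_1 \vee \dots \vee M_k$, i.e.\ to $r(E) = |E|$. Applying the explicit rank formula from Theorem \ref{thm:MUT} then translates this equality into the desired numerical condition.

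First I would handle the easy direction ($\Rightarrow$). If $E = I_1 \sqcup \dots \sqcup I_k$ with each $I_i \in \mathfrak{I}_i$, then for any $A \subseteq E$ the sets $A \cap I_i$ partition $A$ and are independent in $M_i$, so
\[
|A| = \sum_{i=1}^k |A \cap I_i| \leq \sum_{i=1}^k r_i(A\cap I_i) \leq \sum_{i=1}^k r_i(A).
\]

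Next, for ($\Leftarrow$), I would note that by Theorem \ref{thm:MUT} the union matroid $M$ has rank function $r(A) = \min_{B \subseteq A}\{|A\setminus B| + \sum_{i=1}^k r_i(B)\}$. Assuming $|B| \leq \sum_{i=1}^k r_i(B)$ for every $B \subseteq E$, and taking $A = E$, we find that every term in the minimum satisfies
\[
|E\setminus B| + \sum_{i=1}^k r_i(B) \geq |E\setminus B| + |B| = |E|,
\]
while of course $r(E) \leq |E|$ by axiom R1. Hence $r(E) = |E|$, so $E$ is independent in $M$. By the definition of $\mathfrak{I}$ in Theorem \ref{thm:MUT}, this means there exist $I_i \in \mathfrak{I}_i$ with $E = \bigcup_{i=1}^k I_i$.

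Finally I would upgrade this covering to a partition: replace each $I_i$ by $I_i' := I_i \setminus (I_1 \cup \dots \cup I_{i-1})$. Since $I_i' \subseteq I_i$ and independent sets are closed under taking subsets, $I_i' \in \mathfrak{I}_i$; by construction the $I_i'$ are pairwise disjoint and their union is still $E$. This produces the desired partition. No step here is really an obstacle, as the content of the statement has been packaged into the matroid union theorem; the only subtlety is the passage from a covering to an actual disjoint partition, which is handled by the elementary shrinking argument above.
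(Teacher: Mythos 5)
Your proof is correct and takes essentially the same route as the paper: apply the rank formula from the matroid union theorem to $E$, observe that the hypothesis forces $r_U(E) = |E|$ (with equality achieved at $B = \emptyset$), and conclude $E$ is independent in the union matroid. You additionally spell out the forward direction and the shrinking step that upgrades the covering $E = \bigcup I_i$ to a genuine partition, both of which the paper leaves implicit.
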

\begin{proof}
The implication $\Rightarrow$ is straightforward.

For the other implication let $U$ be a matroid that is the union of $M_1,\dots,M_k$. 
We compute the rank of $E$ in $U$, applying the matroid union theorem \ref{thm:MUT}:
$$r_U(E)=\min\{|E|-|B|+\sum_{i=1}^k r_i(B)\}.$$
By assumption for any $B\subset E$ we have $|E|-|B|+\sum_{i=1}^k r_i(B)\geq |E|$. Further, equality holds for $B=\emptyset$. Hence, $r_U(E)=|E|$. Thus, $E$ is an independent set of $U$. By definition it is a union of $k$ independent sets, one in each of the $M_i$'s.
\end{proof}

\section{Polymatroids: combinatorics}
Consider what happens if we drop one of the rank axioms, namely that which states $r(X)\leq |X|$. What object do we get, and what relation does it have to matroids?  This object was originally studied by Edmonds  \cite{edmonds} (although in a different guise,
see Definition \ref{verySoon}), and dubbed a \emph{polymatroid}. The class of polymatroids includes, naturally, the class of matroids, and is greatly important in the field of combinatorial optimisation.

\begin{definition}
A \emph{polymatroid} $\pM=(E,r)$ consists of a ground set $E$ and a rank function $r:\mathcal{P}(E)\rightarrow \Z+$. The rank function $r$ satisfies conditions R2 (monotonicity) and R3 (submodularity) of Definition \ref{rankaxioms}, while condition R1 is relaxed to
%
$r(\emptyset)=0$.

A polymatroid is called a \emph{$k$-polymatroid} if all singletons have rank at most $k$. 
In particular, a matroid is a $1$-polymatroid.
\end{definition}
\begin{remark}
As we assume that our rank function take only integral values, the object we defined is sometimes referred in literature a \emph{discrete polymatroid} \cite{herzog}.
\end{remark}

One vital difference between matroids and polymatroids is that polymatroids do not have well-defined properties of deletion and contraction. 
One problem behind this is that, in a matroid, every basis (or the basis minus the element) is either in the deletion or contraction of any given element of the ground set. This is not true in polymatroids. In consequence, the Tutte polynomial is not directly applicable to polymatroids. In restricted cases, this can be somewhat solved: this is done by Oxley and Whittle \cite{whittle} for 2-polymatroids, 
where the corank-nullity polynomial is still universal for a form of
deletion-contraction recurrence. In \cite{alex3}, the authors strengthen the notion of ``deletion-contraction invariant" to more general combinatorial objects via the use of coalgebras, and compare their results to that of Oxley and Whittle. In their strengthening, the corank-nullity polynomial is indeed still universal, and furthermore, out of the polynomials found in \cite{whittle}, the corank-nullity one is optimal, under the norms used in \cite{alex3}.

Cameron and Fink \cite{myself} construct a version of the Tutte polynomial for all polymatroids which specialises to an evaluation of the  classical Tutte polynomial when applied to a matroid. This will be discussed below. In order to describe it, we first have to explain bases and base polytopes for polymatroids.


\begin{definition}
	Let $\pM=(E,r)$ be a polymatroid. An integer vector ${\bf x} \in \Z+^E$ is called an \emph{independent vector} if ${\bf x}\cdot{\be}_U\leq r(U) \ \mathrm{for \ all} \ U\subseteq E$. If in addition ${\bf x}\cdot{\be}_E= r(E)$, then $\bf x$ is called a \emph{basis}.
\end{definition}
Analoguously to the matroid case, we can give an axiom system for polymatroids in terms of their bases.
\begin{lemma}[\kern-0.5em {\cite[Theorem 2.3]{herzog}}]
	A nonempty finite set $\mathcal{B} \subset \Z+^E$ is the set of bases of a polymatroid on $E$ if and only if $\mathcal{B}$ satisfies
	\begin{enumerate}
		\item all $\bu \in \mathcal{B}$ have the same modulus (sum of entries), and 
		\item if $\bu = (u_1, \ldots, u_n)$ and $\bv = (v_1, \ldots, v_n)$ belong to $\mathcal{B}$ with $u_i > v_i$ then there is $j \in E$ with $u_j < v_j$ such that $\bu-\be_i+\be_j \in \mathcal{B}$.
	\end{enumerate}
\end{lemma}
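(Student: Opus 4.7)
I would establish the two implications separately.

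\emph{Forward direction.} If $\mathcal{B}$ is the basis set of a polymatroid $(E, r)$, then condition (1) is immediate, since $\bu \cdot \be_E = r(E)$ for every $\bu \in \mathcal{B}$. For condition (2), fix $\bu, \bv \in \mathcal{B}$ and $i$ with $u_i > v_i$, and set $J = \{j : u_j < v_j\}$, which is nonempty because $\bu$ and $\bv$ have equal moduli. For each $j \in J$, the vector $\bu - \be_i + \be_j$ has the correct modulus, and a quick case analysis on the position of $i$ and $j$ relative to a test set $U$ shows that the only possible obstruction to its independence is a $\bu$-\emph{tight} set $U_j$ (i.e.\ $\bu \cdot \be_{U_j} = r(U_j)$) with $j \in U_j$ and $i \notin U_j$. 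The standard consequence of submodularity is that the collection of $\bu$-tight sets is closed under union, so if no $j \in J$ gives a valid exchange, then $U := \bigcup_{j \in J} U_j$ is $\bu$-tight, contains $J$, and avoids $i$. But then $\bv \cdot \be_U \leq r(U) = \bu \cdot \be_U$, combined with $\sum_k (u_k - v_k) = 0$, forces $\{k : u_k > v_k\} \subseteq U$, contradicting $i \notin U$.

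\emph{Reverse direction.} Given $\mathcal{B}$ satisfying (1) and (2), define the candidate rank function
\[
r(U) := \max_{\bb \in \mathcal{B}} \bb \cdot \be_U, \qquad r(\emptyset) = 0.
\]
Monotonicity is immediate from $\be_U \leq \be_V$ for $U \subseteq V$, and $r(E)$ equals the common modulus of $\mathcal{B}$ by (1). The core step is submodularity. Given $\bb^1, \bb^2 \in \mathcal{B}$ achieving $r(A \cup B)$ and $r(A \cap B)$, I would iterate axiom (2) to morph the pair $(\bb^1, \bb^2)$ into bases $(\tilde\bb^1, \tilde\bb^2)$ whose weights on $A$ and $B$ sum to at least the original weights on $A \cup B$ and $A \cap B$. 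The idea is that each invocation of (2) lets one swap a single unit of mass inside $A \triangle B$ between $\bb^1$ and $\bb^2$ without changing total weight, and an inductive minimality argument (e.g.\ minimizing $\|\bb^1 - \bb^2\|_1$ over pairs achieving the two maxima) produces a pair to which the desired inequality applies directly. Once $(E, r)$ is a polymatroid, $\mathcal{B}$ is contained in its basis set by construction. For the reverse inclusion, any candidate basis $\bx$ at minimum $\ell^1$-distance from $\mathcal{B}$ that lies outside $\mathcal{B}$ admits a strictly closer neighbour in $\mathcal{B}$ produced via axiom (2) applied to a nearest $\bb \in \mathcal{B}$ and some $\bb^\ast \in \mathcal{B}$ with $b^\ast_k \geq x_k > b_k$, contradicting minimality.

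\emph{Main obstacle.} The forward direction reduces cleanly to the union-of-tight-sets lemma. The difficulty lies in the reverse direction, and specifically in submodularity: axiom (2) only produces \emph{one-sided} exchanges, whereas submodularity naturally wants a \emph{symmetric} exchange modifying $\bb^1$ and $\bb^2$ in tandem. Extracting symmetric exchanges from the one-sided axiom via a minimality argument is the technical heart of the proof, and is also what drives the final $\ell^1$-closest-point argument identifying $\mathcal{B}$ with the basis set of $(E, r)$.
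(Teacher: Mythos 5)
The paper does not actually prove this lemma; it is quoted verbatim from Herzog--Hibi \cite[Theorem~2.3]{herzog}, so there is no in-paper argument to compare against. Evaluating your attempt on its own terms: the forward direction is correct and essentially complete. The observation that the only obstruction to $\bu-\be_i+\be_j$ being a basis is a $\bu$-tight set $U_j$ with $j\in U_j$, $i\notin U_j$ is right, the family of $\bu$-tight sets is indeed closed under union by submodularity, and taking $U=\bigcup_{j\in J}U_j$ with $J=\{j:u_j<v_j\}\subseteq U$ and $i\notin U$ forces $\sum_{k\notin U}(u_k-v_k)\le 0$ while $u_i-v_i>0$ and $u_k\ge v_k$ for all $k\notin U$ --- a genuine contradiction.

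The reverse direction, however, contains two real gaps rather than one ``technical heart.'' First, the submodularity of $r(U)=\max_{\bb\in\mathcal{B}}\bb\cdot\be_U$ is only gestured at: ``iterate axiom (2) to morph the pair'' and ``an inductive minimality argument produces a pair to which the desired inequality applies directly'' is not an argument, and in fact extracting a symmetric exchange lemma from the one-sided axiom (2) is itself a nontrivial claim that needs proof (you would need something like: if $\bu,\bv\in\mathcal{B}$ and $u_i>v_i$, then there is $j$ with $u_j<v_j$ such that \emph{both} $\bu-\be_i+\be_j$ and $\bv+\be_i-\be_j$ lie in $\mathcal{B}$; that is strictly stronger than (2)). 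Second, the closing $\ell^1$-minimality argument does not close as stated. Take $\bx$ a basis of $(E,r)$ not in $\mathcal{B}$, $\bb$ nearest in $\mathcal{B}$, and $\bb^*\in\mathcal{B}$ with $b^*_k\ge x_k>b_k$. Applying axiom (2) to the pair $(\bb,\bb^*)$ gives an exchange of the form $\bb-\be_i+\be_j$ with $b_i>b^*_i$ and $b_j<b^*_j$, or $\bb^*-\be_k+\be_j$ with $b^*_j<b_j$; in neither case is there any guarantee that $b_i>x_i$ and $b_j<x_j$ (resp.\ that the new vector is $\ell^1$-closer to $\bx$ than $\bb$ was), so the exchange may move \emph{away} from $\bx$ and the minimality contradiction does not follow. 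Both gaps are fixable --- Herzog--Hibi's own argument is more careful about which indices and which auxiliary basis to choose --- but as written the reverse direction is not a proof.
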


\begin{definition} \label{verySoon}
Let $\pM=(E,r)$ be a polymatroid. Let $\mathcal{I}\subseteq \Z+^E$ be the set of independent vectors, and $\mathcal{B}\subseteq \Z+^E$ be the set of bases. We have the independent set polytope, which is also referred to as the \emph{extended polymatroid} of $r$:
$$EP(\pM)=\conv \mathcal{I}=\{{\bf x}\in\mathbb{R}_{\geq 0}^E \ | \  x\cdot{\be}_U\leq r(U) \ \mathrm{for \ all} \ U\subseteq E\}.$$
We also have the polymatroid \emph{base polytope}:
\[P(\pM) = \conv \mathcal{B} = EP(\pM)\cap\{x\in\mathbb{R}^E| \  x\cdot{\be}_E= r(E)\}
.\]
\end{definition}

 The  base polytope is in fact what was originally defined to be a polymatroid, by Edmonds.
As before, the base polytope is a face of the extended polymatroid. When the polymatroid considered is a matroid, these definitions coincide exactly with those from Section \ref{sec:matroidBasePolytope}.

Theorem \ref{thm:edges} generalises to the case of polymatroids, giving us another equivalent definition of polymatroids in terms of their base polytopes:
\begin{theorem}[{See \cite[Theorem 3.4]{herzog}}]
	A polytope $P \subset \mathbb{R}^n$ is the base polytope of a polymatroid on $[n]$ if and only if the following two conditions hold:
	\begin{enumerate}
		\item Every vertex of $P$ has coordinates in $\Z+$.
		\item Every edge of $P$ is parallel to $\be_i-\be_j$ for some $i,j \in [n]$.
	\end{enumerate}
\end{theorem}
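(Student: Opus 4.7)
The statement is an equivalence, so I would handle the two directions separately.

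\textbf{Forward direction.} Suppose $P=P(\pM)$ for a polymatroid $\pM=([n],r)$. Condition P1 is immediate, since by definition the bases of a polymatroid are integer vectors and the vertices of $\conv\mathcal{B}$ are elements of $\mathcal{B}$. For P2, let $\bu,\bv$ be the endpoints of an edge $e$ of $P$, and let $\phi$ be a linear functional whose maximum on $P$ is attained exactly along $e$. I would invoke two standard auxiliary facts about polymatroid base polytopes: the face of $P(\pM)$ cut out by $\phi$ is itself the base polytope of a polymatroid $\pM_e$, and every lattice point of a polymatroid base polytope is a basis. Writing ${\bf d}$ for the primitive integer vector pointing from $\bu$ to $\bv$, the point $\bu+{\bf d}$ is then a basis of $\pM_e$ distinct from $\bu$. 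Applying the polymatroid basis-exchange axiom in $\pM_e$ to $\bu$ and $\bu+{\bf d}$ at some coordinate $i$ with $d_i<0$ produces an index $j$ with $d_j>0$ such that $\bu-\be_i+\be_j$ is a basis of $\pM_e$, and hence lies on $e$. Any lattice point of $e$ has the form $\bu+k{\bf d}$, and since $\be_j-\be_i$ is primitive, this forces $k=1$ and ${\bf d}=\be_j-\be_i$.

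\textbf{Reverse direction.} Given $P$ satisfying P1 and P2, I would construct the polymatroid via the candidate rank function
\[
r(U):=\max_{{\bf x}\in P}\,{\bf x}\cdot\be_U,\qquad U\subseteq[n],
\]
and then verify both that $r$ is a polymatroid rank function and that $P=P(\pM)$ for $\pM:=([n],r)$. Integrality of $r$ is automatic, since the maximum is attained at a vertex (integral by P1); $r(\emptyset)=0$ is trivial; and monotonicity follows from $P\subseteq\mathbb{R}_{\geq 0}^n$, which holds because by P2 the modulus ${\bf x}\cdot\be_{[n]}$ is constant on $P$ and every vertex is in $\mathbb{Z}_{\geq 0}^n$. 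The crux is submodularity, which I would reduce to the following feasibility claim: for any $U,V\subseteq[n]$ there exists ${\bf x}^*\in P$ simultaneously maximizing both ${\bf x}\cdot\be_{U\cup V}$ and ${\bf x}\cdot\be_{U\cap V}$. Granting this, the identity $\be_{U\cup V}+\be_{U\cap V}=\be_U+\be_V$ yields
\[
r(U\cup V)+r(U\cap V)={\bf x}^*\cdot(\be_U+\be_V)\leq r(U)+r(V).
\]

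To produce ${\bf x}^*$, the strategy is to start at any vertex of $P$ maximizing ${\bf x}\cdot\be_{U\cup V}$ and walk along edges of direction $\be_a-\be_b$ with $a\in U\cap V$ and $b\in(U\cup V)\setminus(U\cap V)$; such moves preserve ${\bf x}\cdot\be_{U\cup V}$ while strictly increasing ${\bf x}\cdot\be_{U\cap V}$. The main obstacle is the feasibility lemma at the heart of this walk: one must show that whenever the current vertex does not yet attain $r(U\cap V)$, an admissible edge is available, and this is where condition P2 is used in full force. Once submodularity is secured, the equality $P=P(\pM)$ is established by checking both inclusions: $P\subseteq P(\pM)$ from the definition of $r$ together with constancy of the modulus on $P$, and $P(\pM)\subseteq P$ via the greedy algorithm, which for each linear order on $[n]$ produces a vertex of $P(\pM)$ coinciding with a vertex of $P$.
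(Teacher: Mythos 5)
The paper does not actually give a proof here: it simply cites Herzog--Hibi (their Theorem~3.4), so there is no ``paper proof'' against which to compare your route. Judged on its own merits, your outline follows the standard strategy (support function $r(U)=\max_{{\bf x}\in P}{\bf x}\cdot\be_U$ in one direction; basis exchange plus primitivity in the other) and the overall architecture is sound. However, two steps are invoked rather than proved, and one of them is the heart of the matter.

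In the forward direction you rely on the fact that the face of $P(\pM)$ selected by a linear functional is again a polymatroid base polytope. This is true, but it is not a triviality: one first shows that such a face is cut out by equalities $x(T_1)=r(T_1),\dots,x(T_k)=r(T_k)$ along a chain $T_1\subset\cdots\subset T_k$, and then that this set is (a translate of) a product of base polytopes of minors of $\pM$. Without at least citing this, the forward direction is not self-contained. A more elementary alternative avoids faces altogether: with $\phi$ supporting $e$, the exchange $\bu\mapsto\bu-\be_i+\be_j$ already lies in $P$, and one shows $\phi(\bu-\be_i+\be_j)\geq\phi(\bu)$ by choosing $i,j$ greedily with respect to $\phi$, so that $\bu-\be_i+\be_j$ lies on the face $e$.

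In the reverse direction the feasibility lemma you flag as the crux is indeed where the work is, and as written it remains an assertion. It can be closed as follows. Let $\bf v$ be a vertex of $P$ maximizing ${\bf x}\cdot\be_{U\cup V}$, and suppose it does not attain $r(U\cap V)$; pick $\bf w\in P$ with $\bf w\cdot\be_{U\cap V}>\bf v\cdot\be_{U\cap V}$. The tangent cone of $P$ at $\bf v$ is generated by the edge directions at $\bf v$, which by P2 are all of the form $\be_a-\be_b$; writing $\bf w-\bf v=\sum\lambda_k(\be_{a_k}-\be_{b_k})$ with $\lambda_k>0$, optimality of $\bf v$ for $\be_{U\cup V}$ gives $[a_k\in U\cup V]\le[b_k\in U\cup V]$ for every $k$, while $\bf w\cdot\be_{U\cap V}>\bf v\cdot\be_{U\cap V}$ forces some $k$ with $a_k\in U\cap V$, $b_k\notin U\cap V$. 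For that $k$ the two conditions combine to give $b_k\in(U\cup V)\setminus(U\cap V)$, so the corresponding edge preserves $\be_{U\cup V}$ and strictly increases $\be_{U\cap V}$ --- exactly the admissible move you need. Finally, note that the same simultaneous-maximization lemma (applied to the chain $S_1\subset S_2\subset\cdots$) is what justifies your last step that the greedy vertex of $P(\pM)$ already lies in $P$; it would be worth making that dependence explicit, since otherwise the inclusion $P(\pM)\subseteq P$ looks unsupported.
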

If $\pM$ is a polymatroid, then the bases (resp.\ independent vectors) of $\pM$ are precisely the lattice points of $P(\pM)$ (resp.\ $EP(\pM)$). The following proposition describes which bases of $\pM$ correspond to vertices of $P(\pM)$. 
\begin{prop}[\kern-0.5em {\cite{edmonds}, see also \cite[Proposition 1.3]{herzog}}]
	Let $\pM=([n],r)$ be a polymatroid and assign an ordering $S$ to the ground set $[n]$. Let $S_i$ be the first $i$ elements according to this ordering. Every possible $S$ corresponds (not necessarily uniquely) to a vertex of P(M), ${\bf x}={\bf x}_{S}$, where ${\bf x} = (x_1,\ldots x_n)$, and
	\begin{align*}
	x_i = r(S_i)-r(S_{i-1}).
	\end{align*}
In particular, a polymatroid base polytope has at most $n!$ vertices.
\end{prop}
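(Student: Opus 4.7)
The plan is to use the standard greedy/LP-duality argument: ${\bf x}_S$ will be shown to be the unique maximizer on $P(\pM)$ of any linear functional whose coordinates strictly decrease along $S$.

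First I would verify ${\bf x}_S \in P(\pM)$. Write the ordering as $s_1,\ldots,s_n$, so $S_i=\{s_1,\ldots,s_i\}$ (with $S_0=\emptyset$) and $x_{s_i}=r(S_i)-r(S_{i-1})\ge 0$ by monotonicity (R2). The modulus equality ${\bf x}_S\cdot\be_{[n]}=r([n])$ is immediate by telescoping, using $r(\emptyset)=0$. For the inequality ${\bf x}_S\cdot\be_U\le r(U)$ with $U\subseteq[n]$, set $U_i:=U\cap S_i$. When $s_i\in U$, submodularity (R3) applied to $X=S_{i-1}$ and $Y=U_i$ gives $X\cup Y=S_i$ and $X\cap Y=U_{i-1}$, hence $r(S_i)-r(S_{i-1})\le r(U_i)-r(U_{i-1})$. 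Summing over the $i$ with $s_i\in U$ and noting that $r(U_i)-r(U_{i-1})=0$ when $s_i\notin U$, the right-hand side telescopes to $r(U)$.

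Next I would show that ${\bf x}_S$ is a vertex by exhibiting a linear functional uniquely maximized at it. Pick any $c\in\RR^{[n]}$ with $c_{s_1}>c_{s_2}>\cdots>c_{s_n}$ and apply Abel summation:
\[
c\cdot x \;=\; \sum_{i=1}^{n-1} (c_{s_i}-c_{s_{i+1}})\,(x\cdot\be_{S_i}) \;+\; c_{s_n}\,(x\cdot\be_{[n]}).
\]
On $P(\pM)$ each $x\cdot\be_{S_i}\le r(S_i)$, while $x\cdot\be_{[n]}=r([n])$, and every coefficient $c_{s_i}-c_{s_{i+1}}$ is strictly positive, so $c\cdot x$ is bounded above by a constant depending only on $c$ and $r$. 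The point ${\bf x}_S$ saturates every one of these inequalities by the previous paragraph, so it attains this bound. Moreover, the $n$ equations $x\cdot\be_{S_i}=r(S_i)$ for $i=1,\ldots,n$ determine $x$ uniquely via $x_{s_i}=r(S_i)-r(S_{i-1})$, so the maximizer is unique and ${\bf x}_S$ is a vertex.

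Finally, each vertex $v$ of $P(\pM)$ lies in the interior of a full-dimensional normal cone, so I can choose a linear functional $c$ uniquely maximized at $v$ with pairwise distinct coordinates. Letting $S$ be the ordering in which the coordinates of $c$ strictly decrease, the previous step gives $v={\bf x}_S$, so the assignment $S\mapsto {\bf x}_S$ surjects onto the vertex set. Since there are $n!$ orderings, the polytope has at most $n!$ vertices. The main subtlety is the submodularity step in the first paragraph: only the specific choice $X=S_{i-1}$, $Y=U_i$ produces the cancellation needed, and one must verify that the terms with $s_i\notin U$ contribute zero so that they can be added freely to complete the telescoping.
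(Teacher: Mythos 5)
The paper states this proposition only as a citation (to Edmonds and to Herzog--Hibi, Prop.~1.3) and gives no proof of its own, so there is no internal argument to compare against. Your proof is the standard greedy/LP-duality argument found in those references, and every step checks out: the telescoping submodularity estimate shows ${\bf x}_S \in P(\pM)$, Abel summation exhibits a linear functional whose unique maximizer is ${\bf x}_S$ (so it is a vertex), and the generic-functional argument runs this in reverse to show surjectivity. One small wording slip: a vertex $v$ does not ``lie in'' its normal cone; you mean that the normal cone of $v$ is full-dimensional in $\RR^n$ (true even though $P(\pM)$ lies in the hyperplane $x\cdot\be_{[n]}=r([n])$, since the normal cone then contains that hyperplane's normal line in its lineality space), so its interior contains a $c$ with pairwise distinct coordinates. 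The substance is correct.
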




We finish by slightly generalising the ideas of White \cite{WHITE1977292}. As we will see later in Theorem \ref{thm:White} the statement below has geometric consequences. It was proven in \cite[Corollary 46.2c]{schrijver} using different methods.
\begin{theorem}\label{thm:sumofpoly}
	Let $\pM_1,\dots,\pM_k$ be polymatroids on a ground set $E$ with respective polytopes $P(\pM_1),\dots,P(\pM_k)$. Then every lattice point $p\in P(\pM_1)+\dots+P(\pM_k)$ is a sum $p=s_1+\dots +s_k$, where each $s_i$ is a lattice point of $P(\pM_i)$.
\end{theorem}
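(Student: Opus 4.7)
My plan is to induct on $k$. The base case $k=1$ is trivial. For the inductive step I handle the reduction to $k=2$ by noting that $r_1+\cdots+r_{k-1}$ is again a polymatroid rank function (sums of monotone submodular functions vanishing at $\emptyset$ are again such), so it defines a polymatroid $\pM'$ on $E$. Comparing support functions via the greedy algorithm one sees that $P(\pM_1)+\cdots+P(\pM_{k-1})=P(\pM')$: for any weight $w\in\RR^E$, both sides have support function $\sum_j w_{i_j}\bigl((r_1+\cdots+r_{k-1})(S_j)-(r_1+\cdots+r_{k-1})(S_{j-1})\bigr)$, computed by the greedy vertex of the preceding proposition applied to the $w$-decreasing ordering. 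Applying the $k=2$ case to $P(\pM')$ and $P(\pM_k)$ gives $p=q+s_k$ with $q$ a lattice point of $P(\pM')$ and $s_k$ a lattice point of $P(\pM_k)$, and the induction hypothesis decomposes $q$ further.

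For $k=2$, fix a lattice point $p\in P(\pM_1)+P(\pM_2)$ and consider the decomposition polytope
\[
Q:=\{x\in\RR^E : x\in P(\pM_1),\ p-x\in P(\pM_2)\},
\]
which is nonempty by hypothesis. Writing out both base-polytope descriptions, $Q$ is cut out by
\[
p\cdot\be_U-r_2(U)\,\le\,x\cdot\be_U\,\le\,r_1(U)\qquad(U\subseteq E)
\]
together with the equality $x\cdot\be_E=r_1(E)$. The pointwise bounds $0\le x\le p$ that one would otherwise need to impose are already implied by the others: monotonicity of $r_1$ gives $x_e=r_1(E)-x\cdot\be_{E-e}\ge r_1(E)-r_1(E-e)\ge 0$, and the analogous computation with $r_2$ bounds $p-x$ from below by $0$. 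Thus $Q$ is a \emph{sandwich} polytope whose upper bound $U\mapsto r_1(U)$ is integer-valued and submodular and whose lower bound $U\mapsto p\cdot\be_U-r_2(U)$ is integer-valued and supermodular (the latter because $r_2$ is submodular).

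The step I expect to be the main obstacle is to show that every vertex of $Q$ has integer coordinates; any such vertex then yields the desired decomposition $p=x+(p-x)$. This is the classical integrality theorem for base polytopes of generalized polymatroids. The proof I would give is an uncrossing argument: at a vertex $x^*$ of $Q$, pick a maximal linearly independent family of tight inequalities; using submodularity of $r_1$, any crossing pair $A,B$ of $r_1$-tight sets can be replaced by the (again $r_1$-tight) pair $A\cup B,\,A\cap B$, symmetrically for pairs tight against the supermodular lower bound, while a mixed crossing pair is handled by combining the submodular inequality for $r_1$ with the supermodular one for $p(\cdot)-r_2(\cdot)$. Iteration yields a laminar family of tight sets still defining $x^*$, and the incidence matrix of a laminar family is totally unimodular, so the integer right-hand sides force $x^*$ itself to be a lattice vector. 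Alternatively one can deduce the integrality directly from Edmonds' polymatroid intersection theorem, of which this statement is a special case.
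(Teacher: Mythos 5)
Your proof is correct, but it takes a genuinely different route from the paper's. Both arguments reduce to $k=2$; you are more explicit than the paper about why this reduction is legitimate (namely, that the Minkowski sum of polymatroid base polytopes is again a polymatroid base polytope, checked via support functions and the greedy algorithm). From there the two proofs diverge completely. The paper replaces each polymatroid by a matroid on a blown-up ground set $E\times[r]$, then blows up again to $\{(i,j): 1\le j\le p_i\}$ and to $E_{N'}$, and finally invokes the matroid union theorem in the form of Corollary~\ref{cor:cover} (Edmonds' covering-by-bases criterion) to produce the decomposition. You instead stay at the level of polyhedra, form the decomposition polytope $Q=P(\pM_1)\cap(p-P(\pM_2))$, and conclude by integrality of vertices -- i.e.\ by Edmonds' polymatroid intersection theorem (equivalently, integrality of intersections of generalized polymatroids). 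This is essentially the alternative proof the paper itself points to via Schrijver \cite[Corollary 46.2c]{schrijver}. Both approaches are valid; the paper's is more self-contained given that it has already stated the matroid union theorem, while yours is shorter if one is willing to cite polymatroid intersection. One small imprecision in your uncrossing sketch: the ``mixed'' case does not reduce cleanly to a single laminar family, because paramodularity of the pair $\bigl(r_1,\ p(\cdot)-r_2(\cdot)\bigr)$ is not automatic. The standard fix is to uncross within the upper-tight family and within the lower-tight family \emph{separately}, producing two laminar families; their combined incidence matrix is still totally unimodular (a network matrix up to row sign changes), which suffices for integrality. Alternatively, as you note, one can simply cite polymatroid intersection outright.
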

\begin{proof}
	Proceeding by induction on $k$, it is enough to prove the theorem for $k=2$.
	
	Let us choose $r$ large enough, so that $\pM_1$ and $\pM_2$ are $r$-polymatroids.
	We define a matroid $\tilde M_1$ (resp.~$\tilde M_2$) on $E\times [r]$ as follows. Let $\pi_1:E\times [r]\rightarrow E$ and $\pi_2:E\times [r]\rightarrow [r]$ be the projections. A subset $A\subset E\times [r]$ is independent in $\tilde M_1$ (resp.~$\tilde M_2$) if and only if for every subset $B\subset \pi_1(A)$ we have $r_1(B)\geq|\pi_2(A\cap(B\times[r]))|$ (resp.~$r_2(B)\geq|\pi_2(A\cap(B\times[r]))|$), where $r_1$ (resp.~$r_2$) is the rank function of the polymatroid $\pM_1$ (resp.~$\pM_2$). Intuitively, an independent set in $\tilde M_j$ is an independent set $I$ in $\pM_j$ where we replace one point in $E$ by as many points as the rank function dictates. We have natural surjections, $P(\tilde M_j)\rightarrow P(\pM_j)$ and $P(\tilde M_1)+P(\tilde M_2)\rightarrow P(\pM_1)+P(\pM_2)$, coming from the projection $\pi_1$. Thus, it is enough to prove the statement for two matroids. From now on we assume that $\pM_1$ and $\pM_2$ are matroids.
	
	Let $p\in P(M_1)+P(M_2)$ be a lattice point. We know that $p= \sum_i \lambda_i t_i+\sum_j \mu_j q_j$ with $\sum \lambda_i=1$, $\sum \mu_j=1$, $0\leq\lambda_i,\mu_j\in \QQ$ and $t_i$ (resp.~$q_j$) are lattice points of $P(M_1)$ (resp.~$P(M_2)$). After clearing the denominators we have
	$$dp=\sum_i \lambda_i' t_i+\sum_j \mu_j' q_j,$$
	where $\sum \lambda_i'=d$, $\sum \mu_j'=d$ and $0\leq\lambda_i',\mu_j'\in \Z+$.
	
	By restricting the set $E$ we may assume that all coordinates of $p=(p_1,\dots,p_n)$ are nonzero (i.e.\ $p_i \in \{1,2\}$), where we identify $E$ with $[n]$. 
	
	We start by defining two matroids $N_1,N_2$.
	Both are on the ground set $E_N:=\{(i,j):i\in E, 1\leq j\leq p_i\}$. In other words, we replace a point $i$ in the set $E$ by $p_i$ equivalent points. A subset $\{(i_1,j_1),\dots,(i_s,j_s)\}\subset E_N$ is independent in $N_1$ (resp.~$N_2$) if and only if
	\begin{itemize}
		\item all $i_q$'s are distinct, and
		\item $\{i_1,\dots,i_s\}$ is an independent set in $M_1$ (resp.~$M_2$).
	\end{itemize}
	
	We note that a basis of $N_1$ (resp.~$N_2$) maps naturally to a basis of $M_1$ (resp.~$M_2$). The rank function $r_{N_1}$ (resp.~$r_{N_2}$) for $N_1$ (resp.~$N_2$) is the same as the one for $M_1$ (resp.~$M_2$) if we forget the second coordinates.
	Further, the point $p$ has a decomposition as a sum of two points corresponding to basis of $M_1$ and $M_2$ if and only if the ground set $E_N$ is covered by a basis of $N_1$ and a basis of $N_2$. Hence, by Corollary \ref{cor:cover} it is sufficient to prove the following:
	
	For any $A\subset E_N$ we have $|A|\leq r_{N_1}(A)+r_{N_2}(A)$.
	
	We define a matroid $N'_1$ (resp.~$N'_2$) on the ground set $E_{N'}:=\{(i,j,l):i\in E, 1\leq j\leq p_i, 1\leq l\leq d.\}$. In other words we replace any point of $E_N$ by $d$ equivalent points.  A subset $\{(i_1,j_1,l_1),\dots,(i_s,j_s,l_s)\}\subset E_N$ is independent in $N'_1$ (resp.~$N'_2$) if only if
	\begin{itemize}
		\item all $i_q$'s are distinct, and
		\item $\{i_1,\dots,i_s\}$ is an independent set in $M_1$ (resp.~$M_2$).
	\end{itemize}
	We have a natural projection $\pi:E_{N'}\rightarrow E_N$ given by forgetting the last coordinate. We note that $r_{N'_j}(\pi^{-1}(A))=r_{N_j}(A)$ for $j=1,2$. As the point $dp$ is decomposable we know that the set $E_{N'}$ can be covered by $d$ bases of $N'_1$ and $d$ bases of $N'_2$. Hence, for any $B\subset E_{N'}$ we have:
	$|B|\leq dr_{N'_1}(B)+dr_{N'_1}(B)$. Applying this to $\pi^{-1}(A)$ we obtain:
	$$d|A|=|\pi^{-1}(A)|\leq d\cdot r_{N'_1}(\pi^{-1}(A))+d\cdot r_{N'_2}(\pi^{-1}(A))=d\cdot \left(r_{N_1}(A)+r_{N_2}(A)\right).$$
	After dividing by $d$ we obtain the statement we wanted to prove.
\end{proof}

\subsection{The Tutte polynomial for polymatroids} \label{TuttePoly}
The Tutte polynomial for polymatroids is not nearly as well-studied as in the matroid case. We gave two examples of where it was considered in certain classes of polymatroids. We will now go into detail about one suggestion how to construct Tutte polynomial in full generality. 

As mentioned, Cameron and Fink \cite{myself} form a polynomial having Tutte-like properties for polymatroids, which specialises to an evaluation of the Tutte polynomial when applied only to matroids. This is a construction which takes a polytopal, lattice-point-counting, approach as opposed to a straight combinatorial one. It is motivated by an alternative definition of the Tutte polynomial to those we have discussed so far.

\begin{definition}
Take a matroid $M=(E,r)$, and give $E$ some ordering. Let $B$ be a basis of $M$.
\begin{itemize}
\item[{i}.] We say that $e\in E-B$ is \emph{externally active} with respect to $B$ if $e$ is the smallest element in the unique circuit contained in $B\cup e$, with respect to the ordering on $E$.
\item[{ii}.] We say that $e\in B$ is \emph{internally active} with respect to $B$ if $e$ is the smallest element in the unique cocircuit in $(E- B)\cup e$. 
\end{itemize}
\end{definition}

A \emph{cocircuit} is a minimal set among sets intersecting every basis. We will not be using this notion again in the article.

We will denote the number of internally active elements with respect to $B$ with  $I(B)$ and the number of externally active elements by $E(B)$. Then we have the following result.

\begin{theorem}[\kern-0.5em \cite{tutteactivity}]
$$T_M(x,y)=\sum_{B\in\mathcal{B}} x^{I(B)}y^{E(B)}.$$
\end{theorem}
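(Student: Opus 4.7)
The plan is to partition the Boolean lattice $2^E$ into intervals indexed by bases, and then evaluate the corank-nullity formula from Definition \ref{dfn:Tutte} piece by piece. For each basis $B \in \mathcal{B}$ denote by $IA(B) \subseteq B$ the set of internally active elements and by $EA(B) \subseteq E - B$ the set of externally active elements, so that $I(B) = |IA(B)|$ and $E(B) = |EA(B)|$. I would define the Boolean interval
$$J(B) = [B - IA(B),\, B \cup EA(B)] = \{S \subseteq E : B - IA(B) \subseteq S \subseteq B \cup EA(B)\},$$
and first show that $\{J(B)\}_{B \in \mathcal{B}}$ is a partition of $2^E$. This is the delicate step: I would associate to each $S$ a unique basis $B(S)$ by an explicit greedy procedure (processing $E$ in the given linear order, and deciding inclusion in $B(S)$ by combining independence in $M$ with membership in $S$), then verify (a) $S \in J(B(S))$ for every $S$, and (b) distinct bases yield disjoint intervals, using the basis exchange axiom together with the minimality condition built into the definition of activity.

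Granted the partition, the second step is to compute corank and nullity within each piece. Write $S \in J(B)$ uniquely as $S = (B - D) \cup C$ with $D \subseteq IA(B)$ and $C \subseteq EA(B)$. I claim $r(S) = r(M) - |D|$ and $|S| - r(S) = |C|$. The key lemma is that for every $e \in EA(B)$ the fundamental circuit $C(B,e)$ satisfies $C(B,e) - e \subseteq B - IA(B)$: by the standard circuit/cocircuit duality $b \in C(B,e) \iff e \in C^*(B,b)$, if some $b \in C(B,e) - e$ were internally active then $b$ would be the minimum of $C^*(B,b) \ni e$, forcing $b < e$; but $e$ is the minimum of $C(B,e) \ni b$, forcing $e < b$, a contradiction. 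Hence each $e \in C \subseteq EA(B)$ lies in $\cl(B - D)$, which gives both rank formulas.

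The third step is a direct computation: summing the corank-nullity formula over $S \in J(B)$ and factoring yields
$$\sum_{S \in J(B)} (x-1)^{r(M)-r(S)}(y-1)^{|S|-r(S)} = \Bigl(\sum_{D \subseteq IA(B)}(x-1)^{|D|}\Bigr)\Bigl(\sum_{C \subseteq EA(B)}(y-1)^{|C|}\Bigr) = x^{I(B)} y^{E(B)}$$
by the binomial theorem, and summing over $B \in \mathcal{B}$ finishes the proof. The principal obstacle is the partition claim in Step 1: existence of an associated basis $B(S)$ is fairly clear from a greedy algorithm, but proving uniqueness (equivalently, disjointness of the intervals $J(B)$) requires careful activity bookkeeping. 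An alternative route is to prove that the right-hand side satisfies the deletion-contraction recurrence of Lemma \ref{recurrance} by induction on $|E|$, but this demands an equally careful case analysis of how activities transfer between $M$, $M \backslash e$, and $M / e$ when $e$ is the maximum element.
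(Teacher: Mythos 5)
The paper does not actually prove this theorem; it only cites Tutte's original work \cite{tutteactivity}, so there is no in-text argument to compare against. Your proposal is the classical \emph{interval-decomposition} proof, due essentially to Crapo: the Boolean lattice is partitioned into intervals $J(B)=[B-IA(B),\,B\cup EA(B)]$, one per basis, and the corank-nullity sum is evaluated interval by interval. Your Step 2 is correct and the key lemma is exactly the right one: for $e\in EA(B)$ the fundamental circuit $C(B,e)$ misses every internally active element of $B$, which you derive cleanly from the duality $b\in C(B,e)\iff e\in C^*(B,b)$ and the two minimality conditions pulling in opposite directions. Given that lemma, every $e\in C$ lies in $\cl(B-D)$, so $r(S)=|B-D|=r(M)-|D|$ and $|S|-r(S)=|C|$, and the factorisation in Step 3 via the binomial theorem is immediate.

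The one place you are (knowingly) waving your hands is Step 1, the claim that the $J(B)$ tile $2^E$. You are right that this is the crux, and your sketch of a greedy assignment $S\mapsto B(S)$ is the standard route, but as written it is underspecified: one needs an explicit rule (e.g.\ scan $E$ in order and admit $e$ into $B(S)$ precisely when $e$ is not spanned by the elements already chosen together with the larger elements of $S$, or a dual variant), and then two separate verifications, that the resulting $B(S)$ is a basis with $S\in J(B(S))$, and that $J(B)\cap J(B')=\emptyset$ for $B\neq B'$. The disjointness argument is genuinely delicate and usually goes through the observation that $B$ can be reconstructed from any $S\in J(B)$ by the same greedy rule. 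Your fallback route via the deletion-contraction recurrence of Lemma \ref{recurrance}, inducting on the largest element of $E$ and tracking how internal/external activity of $B$ relates to that of $B\backslash e$ or $B-e$ in $M\backslash e$ and $M/e$, is equally valid and is in fact closer to Tutte's original argument for graphs; it trades the global partition lemma for a local case analysis. Either way the approach is sound, and the parts you did spell out are correct.
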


Activity was generalised to hypergraphs by K\'{a}lm\'{a}n in \cite{kalman}, where he proved that a formula similar to the one above does not hold for hypergraphs. The one-variable specialisations are, however, consistent. That is, $T(x,0),T(0,y)$ can be written in terms of activity generating functions for hypergraphic polymatroids.  In \cite{myself}, the authors show that this behaviour extends to all polymatroids given their own Tutte-like polynomial for polymatroids. Their construction is as follows.


Let $\Delta$ be the standard simplex in $\mathbb R^E$ of dimension equal to $|E|-1$,
and $\nabla$ be its reflection through the origin. Construct the polytope given by the Minkowski sum $P(\pM)+u\Delta+t\nabla $ where $\pM=(E,r)$ is any polymatroid and $u,t\in\Z+$. By Theorem 7 of \cite{mcmullen}, the number of lattice points inside the polytope is a polynomial in $t$ and~$u$, of degree $\dim(P(\pM)+u\Delta+t\nabla) = |E|-1$.
This polynomial is written in the form
\begin{equation*}
\label{qdef}
Q_\pM(t,u):=\#(P(\pM)+u\Delta+t\nabla )=\sum_{i,j} c_{ij}\binom{u}{j}\binom{t}{i}.\end{equation*}
Changing the basis of the vector space of rational polynomials gives the polynomial \begin{equation*}
\label{q'def}
Q'_\pM(x,y)=\sum_{ij}c_{ij}(x-1)^i(y-1)^j
\end{equation*}

As mentioned, this specialises to the Tutte polynomial:

\begin{citethm}[{\kern-0.5em \cite[Theorem 3.2]{myself}}]
\label{TtoQ}
Let $M=(E,r)$ be a matroid. Then we have
\begin{equation*}
Q'_M(x,y)=\dfrac{x^{|E|-r(M)}y^{r(M)}}{x+y-1}\cdot T_M\left(\dfrac{x+y-1}{y},\dfrac{x+y-1}{x}\right)
\end{equation*}
as an identification of rational functions.
\end{citethm}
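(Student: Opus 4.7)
The plan is to compute the Ehrhart-type count $Q_M(t,u)=\#(P(M)+u\Delta+t\nabla)$ by partitioning the lattice points of the Minkowski sum according to a greedy decomposition, and then to identify the resulting sum with an evaluation of the activity form of $T_M$ given by $T_M(x,y)=\sum_B x^{I(B)}y^{E(B)}$.

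Fix a linear ordering $\omega$ on $E$. The first step is to write each lattice point $p \in P(M) + u\Delta + t\nabla$ as $p = \be_B + \alpha + \gamma$ for some basis $B$ of $M$, a lattice point $\alpha \in u\Delta$, and a lattice point $\gamma \in t\nabla$; feasibility of such a decomposition follows from a matroid-union argument in the spirit of Theorem~\ref{thm:sumofpoly} (applied to $M$ together with the uniform matroids corresponding to $u\Delta$ and $t\nabla$). Although the decomposition is not unique, an $\omega$-greedy rule would single out a canonical basis $B(p)$, partitioning the lattice points of $P(M)+u\Delta+t\nabla$ into fibres indexed by $\mathcal{B}$.

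For a fixed basis $B$ I would enumerate its fibre in terms of the activities $I(B)$ and $E(B)$: the externally active elements are precisely those outside $B$ to which positive mass from $u\Delta$ may be freely added without disturbing the greedy choice $B(p)=B$, and the internally active elements play the dual role with respect to $t\nabla$; moreover, one distinguished "shared" direction allows mass to be simultaneously absorbed from $u\Delta$ and $t\nabla$ without changing $B(p)$. A careful count should show that the size of each fibre factors as a product of binomial expressions in $u$ and $t$, and after converting from the basis $\binom{u}{j}\binom{t}{i}$ to $(x-1)^i(y-1)^j$ one arrives at
\[
Q'_M(x,y) \;=\; \sum_{B\in\mathcal{B}} x^{|E|-r(M)-E(B)}\,y^{r(M)-I(B)}\,(x+y-1)^{I(B)+E(B)-1}.
\]
Expanding the right-hand side of the theorem using the activity formula shows that it also equals this same sum, which finishes the proof.

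The main obstacle will be Step 2: designing the greedy algorithm so that its fibres are cleanly controlled by the activities of $B$, and then verifying the predicted product form for the fibre count. The denominator $x+y-1$ in the theorem is the algebraic manifestation of the single shared degree of freedom described above; isolating this one coupled direction, rather than double-counting it among the $I(B)$ internal and $E(B)$ external degrees of freedom, is the delicate combinatorial point. Once the fibre count is established, the final identification with $T_M$ reduces to the algebraic rearrangement $x^{n-r}y^r \cdot T_M\!\left(\tfrac{x+y-1}{y},\tfrac{x+y-1}{x}\right) = \sum_B x^{n-r-E(B)}y^{r-I(B)}(x+y-1)^{I(B)+E(B)}$, which is immediate.
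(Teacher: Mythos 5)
The statement you are proving is cited in this paper from Cameron--Fink (\cite[Theorem 3.2]{myself}); the paper gives no proof, so I compare your sketch against what a complete argument would require.

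Your algebraic endgame is correct: writing $T_M(x,y)=\sum_B x^{I(B)}y^{E(B)}$ and clearing denominators shows that the claimed identity is equivalent to
\[
Q'_M(x,y)=\sum_{B\in\mathcal{B}} x^{|E|-r(M)-E(B)}\,y^{r(M)-I(B)}\,(x+y-1)^{I(B)+E(B)-1},
\]
and the exponent $I(B)+E(B)-1$ is indeed always nonnegative (for $E\neq\varnothing$, the $\omega$-smallest element is internally active if it lies in $B$ and externally active otherwise, so $I(B)+E(B)\ge 1$). Your overall strategy --- partitioning the lattice points of $P(M)+u\Delta+t\nabla$ into fibres indexed by bases via a greedy rule, then matching fibre sizes to activities --- is also the right type of argument, in the tradition of activity-based lattice-point enumeration, and is in the same spirit as the construction Cameron and Fink use.

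However, the combinatorial core is not actually carried out, and that gap is fatal to the proposal as a proof. You never specify the greedy rule, never prove that the resulting assignment $p\mapsto B(p)$ is well-defined and hits every basis, and never compute the fibre count. The two sentences asserting that ``the externally active elements are precisely those outside $B$ to which positive mass from $u\Delta$ may be freely added'' and that ``a careful count should show that the size of each fibre factors as a product of binomial expressions'' are exactly the theorem restated as an expectation. In particular, the claim that the fibre of $B$ contributes, after the change of basis from $\binom{u}{j}\binom{t}{i}$ to $(x-1)^i(y-1)^j$, the monomial $x^{n-r-E(B)}y^{r-I(B)}(x+y-1)^{I(B)+E(B)-1}$ requires showing that the fibre is a product of three independent ranges controlled respectively by the inactive external directions, the inactive internal directions, and a single coupled direction --- and none of that is verified. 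Until the greedy rule and its fibre count are pinned down explicitly, this remains a plausible plan rather than a proof; you yourself flag Step 2 as ``the main obstacle,'' and I agree that it is where all the content lives.
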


Formulae can be given for $Q'_\pM$
under the polymatroid generalisation of many standard matroid operations such as direct sum and duality, and in many cases the results are versions of those true for the Tutte polynomial. Most importantly, this polynomial also satisfies a form of deletion-contraction recurrence.

\begin{citethm}[{\kern-0.5em \cite[Theorem 5.6]{myself}}]
\label{delcont}
Let $\pM=(E,r)$ be a polymatroid and take $a\in E(\pM)$. Let $\pN_k$ be the convex hull of $\{p\in P(\pM) \ | \ p_a=k\}$ for some $k\in\{0,\ldots,r(\pM)\}$, and $Q'_\pN$ the polynomial formed by replacing the $\pM$ in the above definition by $\pN$. Then $$Q'_\pM(x,y)=(x-1)Q'_{\pM\backslash a}(x,y)+(y-1)Q'_{\pM/a}(x,y)+\sum_{k} Q'_{\pN_k}(x,y).$$
\end{citethm}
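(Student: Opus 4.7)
The plan is to verify the identity at the level of $Q_\pM(t,u)$, since by construction the map $\binom{t}{i}\binom{u}{j} \mapsto (x-1)^i(y-1)^j$ is a linear bijection on the ambient polynomial spaces, so proving the $Q'$-identity is equivalent to proving its image under the inverse change of basis. Using the Vandermonde-style identity $\binom{t}{i+1}=\sum_{s=0}^{t-1}\binom{s}{i}$, one sees that the prefactor $(x-1)$ on the $Q'$-side translates to the ``summation operator'' $t \mapsto \sum_{s=0}^{t-1}$ on the $Q$-side. Consequently, the claim reduces to proving, for every pair of non-negative integers $t,u$, the lattice-point identity
\begin{equation*}
\#\bigl(P(\pM)+u\Delta+t\nabla\bigr) = \sum_{s=0}^{t-1}\#\bigl(P(\pM\!\setminus\! a)+u\tilde\Delta+s\tilde\nabla\bigr) + \sum_{s=0}^{u-1}\#\bigl(P(\pM/a)+s\tilde\Delta+t\tilde\nabla\bigr) + \sum_{k}\#\bigl(\pN_k+u\Delta+t\nabla\bigr),
\end{equation*}
where $\tilde\Delta,\tilde\nabla$ are the simplices in $\mathbb{R}^{E-a}$. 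This is a statement about counting lattice points, which is what one can attack directly.

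Next, I would stratify the lattice points of $P(\pM)+u\Delta+t\nabla$ by the value $m$ of their $a$-th coordinate. Writing a lattice point as $p = q + \sigma - \rho$ with $q \in P(\pM)$, $\sigma \in u\Delta$, $\rho \in t\Delta$ (so $-\rho\in t\nabla$), one has $p_a = q_a + \sigma_a - \rho_a$. The plan is to distinguish three regimes: (i) $m \in \{0,\ldots,r(\pM)\}$ and achievable as $q_a$ for some $q$ in $P(\pM)$, (ii) $m$ lies above this range, forcing $\sigma_a > 0$ and linking the contribution to the projected polymatroid $\pM\setminus a$, and (iii) $m$ below the range, forcing $\rho_a > 0$ and linking the contribution to $\pM/a$. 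Regime (i) gives exactly $\sum_k \#(\pN_k + u\Delta + t\nabla)$ via the Minkowski-slice decomposition of the sum. Regime (ii) decomposes the contribution according to how many units of the $a$-coordinate come from $\sigma$ — summing over $\sigma_a = s+1$ for $s=0,\ldots,t-1$ recovers the summation that corresponds to the $(x-1)$ prefactor, where after projecting out the $a$-coordinate one is counting lattice points of $P(\pM\setminus a)+u\tilde\Delta+s\tilde\nabla$. Regime (iii) is symmetric and produces the $(y-1) Q'_{\pM/a}$ term.

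The main obstacle I anticipate is the bookkeeping of the Minkowski-sum slicing together with the issue flagged earlier in the excerpt: deletion and contraction are not canonically defined for polymatroids, so one must specify the correct subsidiary polytopes. The natural choice is to take $P(\pM\setminus a)$ to be the image of $P(\pM)$ under the projection $\mathbb{R}^E\to\mathbb{R}^{E-a}$ and $P(\pM/a)$ to be the ``top'' slice (suitably translated); one must verify that the induced lattice-point counts are compatible with the Minkowski-slice decomposition. A secondary technical point is that a single lattice point may admit several decompositions $p = q+\sigma-\rho$, so the stratification by $m$ must be shown to give a genuine partition and not just a cover. With those matters handled, summing the three regimes over all $m$ yields the claimed identity; inverting the basis change then recovers the statement in terms of $Q'_\pM$.
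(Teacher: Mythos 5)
The paper states this result as a \emph{cited theorem} from Cameron--Fink~\cite{myself} and does not reproduce a proof, so there is no in-paper argument to compare against. Judged on its own terms, your translation to lattice-point counting is the right first move: the map $\binom{t}{i}\binom{u}{j}\mapsto(x-1)^i(y-1)^j$ is a linear bijection, and $\binom{t}{i+1}=\sum_{s=0}^{t-1}\binom{s}{i}$ correctly turns the factor $(x-1)$ (resp.\ $(y-1)$) into a sum over $s<t$ (resp.\ $s<u$). Stratifying $(P(\pM)+u\Delta+t\nabla)\cap\ZZ^E$ by the value $m=p_a$ is also the natural way to attack the resulting identity.

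However, the core of the argument is only asserted, and what is asserted is partly backwards. First, the regime labels are swapped: if $m$ is \emph{above} the range of $q_a$ then one needs $\sigma_a>0$ and is looking at the top face of $P(\pM)$, which is $P(\pM/a)$, not $P(\pM\setminus a)$; symmetrically $m<\min_a P(\pM)$ forces $\rho_a>0$ and involves the bottom face $P(\pM\setminus a)$. (Your displayed formula has the right pairings --- $(x-1)\leftrightarrow t\leftrightarrow\nabla\leftrightarrow\pM\setminus a$ and $(y-1)\leftrightarrow u\leftrightarrow\Delta\leftrightarrow\pM/a$ --- only the prose has them crossed.) Second, and more importantly, the identification of each slice with the claimed Minkowski sum is the entire mathematical content and is not established. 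The $\supseteq$ inclusion is easy, but $\subseteq$ is not: given $p=q+\sigma-\rho$ with $p_a=m$, one cannot simply transfer the $a$-mass from $\sigma,\rho$ into $q$, since changing $q_a$ leaves the hyperplane $\sum_i x_i=r(E)$ and hence exits $P(\pM)$. A genuine rebalancing argument using the structure of polymatroid polytopes is required, and sketching it is where the proof really lives. Third, your worry that the decompositions $p=q+\sigma-\rho$ are non-unique is misplaced --- bucketing lattice points of a fixed polytope by the value of a coordinate is automatically a partition; the subtlety is in identifying each bucket, not in forming the partition. Finally, in the $\pN_k$ term you keep the full-dimensional $\Delta,\nabla$, but a small computation (e.g.\ $\pM=U_{1,2}$, where $Q'_\pM=x+y$) shows the identity fails unless one replaces each $\pN_k$ by its projection to $\RR^{E-a}$ together with $\tilde\Delta,\tilde\nabla$; the statement in the survey is admittedly terse on this point, but a proof must commit to the correct normalization.
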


\begin{remark} \label{rmk:polymatroidmatroid}
	We mention another approach of generalising the Tutte polynomial to polymatroids. In the proof of Theorem \ref{thm:sumofpoly}, we explained how to associate to an $r$-polymatroid $\pM$ on $E$ a matroid $M$ on $E \times [r]$. We could define the Tutte polynomial of $\pM$ to be simply the usual Tutte polynomial of $M$. Of course, the result might depend on the chosen $r$. Still, it is natural to ask if there is any relation between this construction and the one described above.
\end{remark}
\section{Flag varieties: geometry} \label{sec:flagVar}
In this section, we always work over the field of complex numbers. 
\subsection{Representations and characters}\label{sub:repsandchar}
We begin by fixing some notation regarding the representation theory of $GL_n$. We refer the reader to \cite[Chapter 4]{SturmfelsInvariant} for a brief introduction to the representation theory of $GL_n$, or to \cite{FultonHarris} for a more detailed account.

The (polynomial) irreducible representations of $GL_n$ are in bijection with Young diagrams $\lambda$ with at most $n$ rows. We write $\lambda = (a_1,\ldots,a_k)$ for the Young diagram with rows of length $a_1\geq a_2 \geq \ldots \geq a_k > 0$. The associated $GL_n$-representation is called a \emph{Weyl module of highest weight $\lambda$}, and will be denoted by $\SS^\lambda V$ (here $V$ refers to the natural representation of $GL_n$, i.e.~an $n$ dimensional vector space with the linear $GL_n=GL(V)$ action). The usual construction of Weyl modules goes via Young symmetrisers, which we will not recall here. For readers not familiar with them, it suffices to know that $\SS^{(a)}V=S^aV$, the $a$-th symmetric power of the natural representation, and that $\SS^{(1,\ldots,1)}V$ (where $1$ appears $a$ times), is the exterior power $\bigwedge^aV$.

In this article we will be interested in the action of a maximal torus $T\subset GL(V)$ on flag varieties. Such a torus $T\simeq (\K^*)^n$ may be identified with the diagonal nondegenerate matrices, after fixing a basis of $V$. We recall that a torus $T$ acting on any vector space $W$ induces a weight decomposition:
$$W=\bigoplus_{\bf c\in\ZZ^n} W_{\bf c},$$
where $(t_1\dots,t_n)\in T$ acts on $v\in W_{\bf c}$ by scaling as follows:
$$(t_1,\dots,t_n)v=t_1^{c_1}\cdots t_n^{c_n} v.$$
In particular, an action of the torus $T$ on a one-dimensional vector space $\K$ may be identified with a lattice point in $\ZZ^n$.
We call $\ZZ^n=M$ the lattice of \emph{characters} of $T$. An element $(a_1,\dots,a_n)\in M$ is a character identified with the map:
$$T\ni(t_1,\dots,t_n)\mapsto t_1^{a_1}\cdots t_n^{a_n}\in \K^*.$$

Any irreducible $GL(V)$-representation $W=\SS^{c_1,\dots,c_n} V$ decomposes as above under the action of $T$ with a single one-dimensional component $W_{c_1,\dots,c_n}$; moreover all other components have a lexicographically smaller weight.
 This explains the name ``Weyl module of highest weight $({c_1,\dots,c_n})$."

\subsection{Grassmannians}
A basic example of a flag variety is a Grassmannian $G(k,V)$, which as a set parameterises $k$-dimensional subspaces of an $n$-dimensional space $V$.
A point in $G(k,n)$ can be represented by a full-rank $k \times n$-matrix $A$, where our $k$-dimensional subspace is the row span of $A$. Two matrices $A$ and $B$ represent the same point in $G(k,n)$ if and only if they are the same up to elementary row operations.

 $G(k,n)$ can be realised as an algebraic variety as follows:
$$G(k,n)=G(k,V)=\{[v_1\wedge\dots\wedge v_k]\subset \PP(\bigwedge^k V)\}.$$
Here, $v_1,\dots, v_k$ are the rows of the aforementioned matrix $A$, and thus
a point of $G(k,n)$ is identified with the subspace $\langle v_1,\dots,v_k\rangle$.
The embedding presented above is known as the \emph{Pl\"ucker embedding} and the Grassmannian is defined by quadratic polynomials known as \emph{Pl\"ucker relations} \cite{manivel}. Explicitly, in coordinates, the map associates to the matrix $A$ the value of all $k\times k$ minors. We refer the readers not familiar with algebraic geometry, and in particular Grassmannians, to a short introduction in \cite{MMGr}.

  The Pl\"ucker embedding may be identified with a very ample line bundle on $G(k,n)$, which we will denote by $\mathcal{O}(1)$. Other very ample line bundles on $G(k,n)$ are the $d$-th tensor powers $\mathcal{O}(d)$. They can be realised as a composition of the Pl\"ucker embedding with the $d$-th Veronese map 
$
\PP(\bigwedge^k V) \to \PP(\Sym^d \bigwedge^k V).$

\begin{remark}
A reader not familiar at all with very ample line bundles should not think about them in terms of geometric objects, but rather as maps into projective spaces. Let us present this with the example of the projective space $\PP^n$ (which also equals $G(1,n+1)$). Here we have a basic identity map $\PP^n\rightarrow \PP^n$, which we denote by $\mathcal{O}(1)$. The $r$-th \emph{Veronese map} embeds $\PP^n$ in a larger projective space $\PP^{{{n+r}\choose n}-1}$ by evaluating on a point all degree $r$ monomials. The associated map is given by $\mathcal{O}(r)$. For $n=1$ and $r=2$ we get:
\[\PP^1\ni[x:y]\rightarrow [x^2:xy:y^2]\in\PP^2.\]
\end{remark}

 It will follow from Proposition \ref{prop:BW} that the embedding of $G(k,n)$ by $\mathcal{O}(d)$ spans a projectivisation of an irreducible representation $V_{\lambda_0}$ of $GL_n$. 
 The Young diagram $\lambda_0 = (d, \ldots, d)$ 
consists of $k$ rows of length $d$. 

\subsection{Flag varieties}
More generally, for any irreducible representation $V_\lambda$ of $GL_n$ the projective space $\PP(V_\lambda)$ contains a unique closed orbit, known as a homogeneous variety or more precisely a flag variety. To describe it, let us fix a sequence of $s$ positive integers $0<k_1<\dots<k_s<n$. The flag variety is defined as follows:
$$Fl(k_1,\dots,k_s;n)=\{V_1\subset\dots\subset V_s\subset V:\dim V_i=k_i\}\subset G(k_1,V)\times\dots \times G(k_s,V).$$
Hence, flag varieties are in bijection with (nonempty) subsets of $\{1,\dots,n-1\}$, while Grassmannians correspond to singletons.\\
From now on we will abbreviate the tuple $(k_1,\ldots,k_s)$ to ${\bf k}$, and the flag variety $Fl(k_1,\ldots,k_s;n)$ to $Fl({\bf k},n)$.
A point in $Fl(\bk,n)$ can be represented by a full-rank $n \times n$-matrix $A$: the row span of the first $d_i$ rows is $V_i$. (Although note that only the first $k_s$ rows of the matrix are relevant.) As with Grassmannians, different matrices can represent the same point in $Fl(\bk,n)$. More precisely, if we partition the rows of $A$ into blocks of size $k_1,k_2-k_1,\ldots,n-k_s$, then we are allowed to do row operations on $A$, with the restriction that to a certain row we can only add a multiple of a row in the same block or a block above. Another way to think about this is the following: let $P_{\bk} \subset GL_n(\K)$ be the parabolic subgroup of all invertible matrices $A$ with $A_{ij}=0$ if $i \leq k_r < j,$ for some $r$. Then two $n \times n$ matrices represent the same flag if and only if they are the same up to left multiplication with an element of $P_{\bk}$. Hence $Fl(\bk,n)$ can also be described as the quotient $_{P_{\bk}} \backslash ^{GL_n(\K)}$ (a homogeneous variety). 

Just as for Grassmannians we may study different embeddings of flag varieties. The natural one is given by the containment
$$G(k_1,V)\times\dots \times G(k_s,V)\subset \PP(\bigwedge^{k_1}V)\times\dots\times\PP(\bigwedge^{k_s} V)\subset \PP(\bigwedge^{k_1}V\otimes\dots\otimes \bigwedge^{k_s} V),$$
where the last map is the Segre embedding. The representation $\bigwedge^{k_1}V\otimes\dots\otimes \bigwedge^{k_s} V$ in general is reducible -- precise decomposition is known by Pieri's rule (or more generally by the Littlewood-Richardson rule), see for example \cite[Proposition 15.25]{FultonHarris}. As we will prove below, the flag variety spans an irreducible representation with the corresponding Young diagram with $s$ columns of lengths $k_s,k_{s-1},\dots,k_1$ respectively.

Other embeddings can be obtained as follows. We replace the Segre map by the Segre--Veronese, i.e.~we first re-embed a Grassmannian with a Veronese map. Thus, a flag variety with an embedding can be specified by a function:
$$f:\{1, \ldots, n-1\}\rightarrow\Z+.$$
To obtain a flag variety abstractly we first consider a subset $\{a \in [n-1]: f(a)>0\}$. The Segre--Veronese map is specified by the values of the function $f$ -- the case $f(a)=1$ corresponds to the usual Segre. The irreducible representation we obtain has an associated Young diagram with $f(a)$ columns of length $a$. Before proving all these statements we present an example.
\begin{example}\label{ex:flag}
Let us fix $n=4$ and a function $f$ that on $1,2,3$ takes values $2,1,0$ respectively. The corresponding flag variety equals $Fl(1,2;4)$, i.e.~it parameterises one-dimensional subspaces, $l$, inside two-dimensional subspaces, $S$, in a fixed four-dimensional space, $V$. The line $l$ corresponds to a point in a projective space $\PP(V)=G(1,V)$ and the space $S$ to a point in $G(2,V)$. If we suppose $l=[v_1]$, then we may always find $v_2\in V$ such that $S=\langle v_1,v_2\rangle$. Hence
$$Fl(1,2)=\{[v_1]\times[v_1\wedge v_2]\in \PP(V)\times G(2,V)\}.$$
We now pass to the embedding. As $f(1)=2$ we have to consider the second Veronese map $\PP(V)\rightarrow \PP(S^2(V))$ given by $[v]\rightarrow [v\cdot v]$. We obtain
$$Fl(1,2)=\{[v_1\cdot v_1]\times [v_1\wedge v_2]\in \PP(S^2V)\times G(2,V)\}\subset \PP(S^2V\otimes \bigwedge^2 V).$$

By Pieri's rule, we have a decomposition of $GL(V)$ 
representations:
$$S^2V\otimes \bigwedge^2 V=\SS^{3,1}V\oplus \SS^{2,1,1}V.$$
 Hence, $\SS^{3,1}(V)$ corresponds to the Young diagram with the first row of length three and the second row of length one. We note that this diagram indeed has $2$ columns of length $1$, $1$ column of length $2$, and $0$ columns of length $3$. 

The flag variety is always contained in the lexicographically-first (highest weight) irreducible component -- cf.~Proposition \ref{prop:BW} below; in our example this is $\SS^{3,1}(V)$. In particular, we may realise the representation $\SS^{3,1}(V)$ as a linear span of the affine cone over the flag variety:
$$\big\langle \widehat{Fl(1,2)}=\{(v_1\cdot v_1)\otimes (v_1\wedge v_2):v_1,v_2\in V\}  \big\rangle\subset S^2V\otimes\bigwedge^2 V.$$
For readers not familiar with the construction of $\SS^{\lambda}V$, this can be taken as a definition. For a proof that this definition is equivalent to the usual construction, see Proposition \ref{prop:BW} below.
\end{example}
The function $f$ is often represented on a Dynkin diagram:

\begin{center}
\includegraphics[scale=0.2]{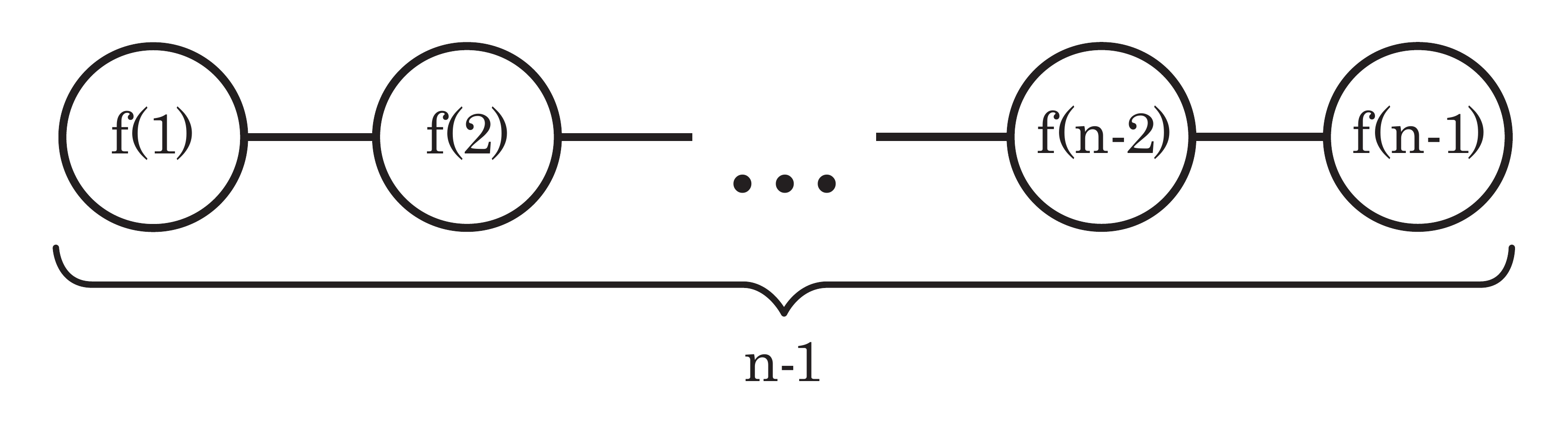}
\end{center}

For Example \ref{ex:flag} this would be:

\begin{center}
\includegraphics[scale=0.15]{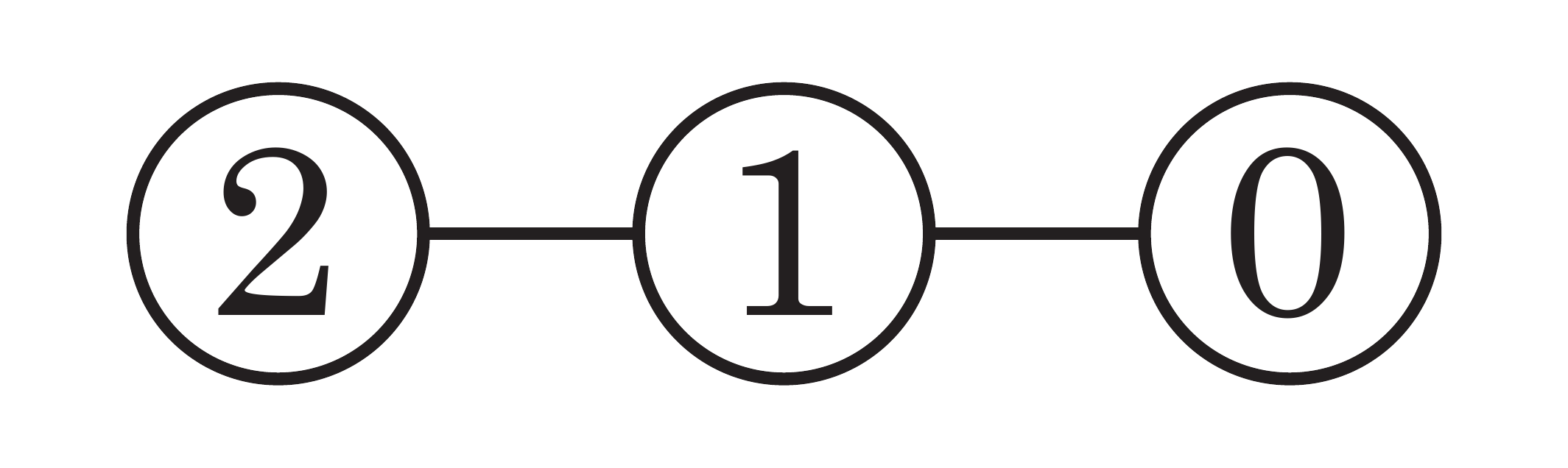}
\end{center}

We note that if $f$ is positive we obtain a complete flag variety, i.e.~the variety parametrizing complete flags:
$$V_1\subset V_2\subset\dots\subset V_{n-1}\subset \K^n.$$
The complete flag variety maps to any other flag variety, simply by forgetting the appropriate vector spaces. We note that all our constructions are explicit and only use exterior (for Grassmannians), symmetric (for Veronese) and usual (for Segre) tensor products, as in Example \ref{ex:flag}.

 We are now ready to prove a special case of the Borel-Bott-Weil Theorem relating representations and embeddings of flag varieties. 
\begin{prop}[Borel-Weil]\label{prop:BW}
Any flag variety $Fl(k_1,\dots,k_s;n)$ with an embedding given by a function $f$ 
spans the irreducible $GL(V)$-representation $S^{\lambda}V$, where the Young diagram $\lambda$ has $f(j)$ columns of length $j$.
\end{prop}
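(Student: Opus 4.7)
The plan is to single out a distinguished $T$-weight vector in the affine cone $\widehat{Fl(\bk,n)}$ whose weight is exactly $\lambda$, argue that it is a highest weight vector, and then use $GL(V)$-equivariance of the linear span to conclude. Fix a basis $e_1,\dots,e_n$ of $V$ diagonalizing the torus $T$, and consider the standard flag $\langle e_1\rangle\subset\langle e_1,e_2\rangle\subset\dots\subset\langle e_1,\dots,e_{k_s}\rangle$. Under the Plücker-Veronese-Segre embedding determined by $f$, this flag lifts (up to scalar) to
\[
w_0 \;:=\; \bigotimes_{i=1}^{s}\bigl(e_1\wedge\cdots\wedge e_{k_i}\bigr)^{\otimes f(k_i)} \;\in\; \bigotimes_{i=1}^{s}\Sym^{f(k_i)}\!\bigwedge\nolimits^{k_i}\!V,
\]
exactly as in Example \ref{ex:flag}. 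A direct bookkeeping shows that $w_0$ has $T$-weight $\sum_i f(k_i)(\underbrace{1,\dots,1}_{k_i},0,\dots,0)$, which is precisely the partition $\lambda$ having $f(j)$ columns of length $j$; moreover every strictly upper-triangular matrix annihilates $w_0$ because appending $e_a$ with $a\leq k_i$ to the wedge $e_1\wedge\dots\wedge e_{k_i}$ produces a repeated index and hence zero. Thus $w_0$ is a highest weight vector of weight $\lambda$.

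Next, since $GL(V)$ acts transitively on the set of flags of the fixed shape $\bk$, the $GL(V)$-orbit of $w_0$ sweeps out all of $\widehat{Fl(\bk,n)}\setminus\{0\}$ (modulo rescaling the representative). Consequently, the linear span
\[
W \;:=\; \bigl\langle \widehat{Fl(\bk,n)}\bigr\rangle \;\subset\; \bigotimes_{i=1}^{s}\Sym^{f(k_i)}\!\bigwedge\nolimits^{k_i}\!V
\]
is a $GL(V)$-subrepresentation, and in fact coincides with the cyclic submodule $GL(V)\cdot w_0$.

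Finally, decompose the ambient space into irreducibles via iterated application of Pieri's rule (or Littlewood-Richardson): every Young diagram $\mu$ appearing as a highest weight of a summand is obtained by stacking the column shapes of the factors, and the lexicographically maximal such $\mu$ is exactly $\lambda$, with multiplicity one. Since $w_0$ is a weight vector of weight $\lambda$ and no summand of lex-smaller highest weight can contain a vector of weight $\lambda$, the vector $w_0$ lies in the unique copy of $\SS^\lambda V$. By irreducibility of $\SS^\lambda V$, the cyclic submodule $GL(V)\cdot w_0$ equals $\SS^\lambda V$, so $W=\SS^\lambda V$.

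The main obstacle is the last step: one needs to know that $\lambda$ is the unique highest weight occurring (and with multiplicity one) in the Pieri decomposition of $\bigotimes_i \Sym^{f(k_i)}\bigwedge^{k_i} V$. This is standard but requires care — one could alternatively sidestep the full Pieri decomposition by taking the property ``generated as a $GL(V)$-module by a highest-weight vector of weight $\lambda$'' as a working definition of $\SS^\lambda V$, as suggested by the remark at the end of Example \ref{ex:flag}; then the argument reduces to the first two paragraphs.
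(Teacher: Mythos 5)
Your proof follows essentially the same route as the paper's: single out the image $w_0$ of the standard coordinate flag, observe that its $T$-weight is $\lambda$ and that this is the lexicographically-highest weight of the ambient space (so $w_0\in\SS^\lambda V$), and then use $GL(V)$-transitivity on flags together with irreducibility of $\SS^\lambda V$ to conclude the span is all of $\SS^\lambda V$. The only additions are minor explicitations that the paper leaves implicit (that $w_0$ is killed by strictly upper-triangular matrices, and the Pieri bookkeeping justifying ``lex-highest with multiplicity one''); these are correct and harmless.
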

\begin{proof}
Fix a basis $e_1,\dots,e_n$ of $V$.
Let us consider the following flag of subspaces:
$$\langle e_1,\dots,e_{k_1}\rangle\subset \langle e_1,\dots,e_{k_2}\rangle\subset\dots \subset \langle e_1,\dots,e_{k_s}\rangle$$
and the corresponding point $p\in Fl(k_1,\dots,k_s)$. Under the embedding specified by $f$ it is mapped to:
$$(e_1\wedge\dots\wedge e_{k_1})^{\circ f(k_1)}\otimes\dots\otimes (e_1\wedge\dots\wedge e_{k_s})^{\circ f(k_s)}\subset S^{f(k_1)}(\bigwedge^{k_1}V)\otimes\dots\otimes S^{f(k_s)}(\bigwedge^{k_s}V).$$
The $GL(V)$-decomposition of the ambient space is highly non-trivial. However, looking directly at the $T$ decomposition we see that, up to scaling, the image of $p$ is the unique lexicographically-highest vector. Hence, in particular, the image of $p$ belongs to $S^{\lambda}V$, as all other $GL(V)$-representations appearing in the decomposition have strictly smaller highest weights. Furthermore, the flag variety is an orbit under the $GL(V)$-action -- one can explicitly write a matrix mapping any flag to any other given flag. Thus, if one point is contained in the irreducible representation, the whole variety must be contained in it.

It remains to show that the span of the flag variety is indeed the whole irreducible representation. This is true, as the flag variety is $GL(V)$-invariant, and thus its linear span is a representation of $GL(V)$. As  $S^{\lambda}V$ is irreducible, the linear span must coincide with it.
\end{proof}
The above theorem may be regarded as a realisation of irreducible $GL(V)$-representations as spaces of sections of a very ample line bundle on a flag variety. A more general Borel-Bott-Weil theorem provides not only a description of global sections -- zeroth cohomology -- but also higher, arbitrary cohomology.

Later on, we will denote a flag variety together with the embedding given by $f$ as $Fl({\bf k};n)$, where ${\bf k}=(k_1,\ldots,k_s)$ satisfies $0 < k_1\leq \ldots \leq k_s < n$ and has $f(a)$ entries equal to $a$, for all $a \in [n-1]$. For example, the embedding of Example \ref{ex:flag} will be written as $Fl(1,1,2;4)$.

\section{Representable matroids: combinatorics and geometry}\label{sec:repmat}
Let us consider a representable matroid $M$ given by $n=|E|$ vectors spanning a $k$-dimensional vector space $V$. By fixing a basis of $V$ we may represent this matroid as a $k \times n$ matrix $A$. On the other hand the matrix $A$ may be regarded as defining a $k$-dimensional subspace of an $n$-dimensional vector space, i.e.~a point in $G(k,n)$. Since applying elementary row operations to $A$ does not change which of the maximal minors of $A$ vanish, the matroid $M$ only depends on the $k$-dimensional subspace, and not on the specific matrix $A$ representing our subspace. In this way we have associated to any point $p \in G(k,n)$ a representable rank-$k$ matroid $M_p$ on $[n]$.
\begin{remark}
In the literature, the correspondence between points in $G(k,n)$ and vector arrangements in $\K^k$ is known as the \emph{Gelfand-MacPherson correspondence}. The way we just constructed it is very explicit, but has the disadvantage of not being canonical (it depends on a chosen basis of $\K^n$). There are several ways to fix this.

One way of obtaining a more intrinsic construction is to replace the Grassmannian $G(k,n)$ by the Grassmannian $G(n-k,n)$. If $A$ represents a surjective linear map from $\K^n$ to $V$, then to $A$ one can associate the $(n-k)$-dimensional kernel of this map, i.e.~a point in $G(n-k,n)$. As this construction requires dual matroids we decided to present the one above in coordinates.

A different (but closesly related) intrinsic construction would be to define $G(k,\CC^n)$ as the space of $k$-dimensional quotients of $\CC^n$ (instead of $k$-dimensional subspaces). Then $A$ maps the standard basis of $\CC^n$ to $n$ vectors in a smaller space $V \in G(k,\CC^n)$; these $n$ vectors represent a matroid.

A third solution would be to talk about hyperplane arrangements instead of vector arrangements.
\end{remark}

The vector space $\K^n$ comes with the action of a torus $T=(\K^*)^n$.
We have associated a point $p\in G(k,\K^n)$ to a representation of a matroid. If we change the representation by rescaling the vectors we do not change the matroid and the associated point belongs to the orbit $Tp$. Hence, the intrinsic properties of the matroid $M_p$ should be related to the geometry of $Tp$ -- a feature we will examine in detail throughout the article. The closure $\overline{Tp}$ is a projective toric variety. For more information about toric geometry we refer to \cite{Cox, sturmfels1996grobner, Fulton, michalek2018selected}.
\begin{remark}
Of course it can happen that different torus orbits give rise to the same matroid: there are only finitely many matroids on $[n]$, but if $1<k<n-1$ there are infinitely many torus orbits in $G(k,n)$. In fact, the set of all points in $G(k,n)$ giving rise to the same matroid forms a so-called \emph{thin Schubert cell} or \emph{matroid stratum}, which typically is a union of infinitely many torus orbits. Thin Schubert cells were first introduced in \cite{GGMS}. Thin Schubert cells are badly behaved in general: for fixed $k \geq 3$ the thin Schubert cells of $G(k,n)$ exhibit arbitrary singularities if $n$ is large enough. This is a consequence of Mn\"ev's theorem \cite{Mnev}. See \cite[Section 1.8]{Lafforgue} for a more detailed discussion.
\end{remark}
\begin{theorem}[Gelfand-Goresky-MacPherson-Serganova \cite{GGMS}]\label{thm:PM}
The lattice polytope representing the projective toric variety $\overline{Tp}$  described above is isomorphic to $P(M_p)$. 
\end{theorem}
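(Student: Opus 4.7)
The plan is to compute the torus orbit closure and its polytope directly via the Plücker embedding. Fix a basis $e_1,\dots,e_n$ of $\K^n$, inducing a basis $\{e_B := e_{i_1}\wedge \dots \wedge e_{i_k}\}_{B = \{i_1<\dots<i_k\}}$ of $\bigwedge^k \K^n$. Under the action of $T=(\K^*)^n$ on $\bigwedge^k \K^n$, the line $\K\cdot e_B$ is a weight space with character $\be_B = \sum_{i\in B}\be_i \in \ZZ^n$; in particular, the weight decomposition of $\bigwedge^k \K^n$ has one-dimensional weight spaces indexed precisely by $B\in\binom{[n]}{k}$, and distinct bases give distinct weights.

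Next I would translate Plücker coordinates into matroid data. Writing $p\in G(k,n)$ as the row span of a $k\times n$ matrix $A$, the Plücker coordinate $p_B$ equals the $k\times k$ minor of $A$ with column indices $B$. By the description of the bases of a representable matroid given just before Example \ref{eg:nonpappus}, this minor is nonzero if and only if the columns indexed by $B$ are linearly independent, i.e.\ if and only if $B$ is a basis of $M_p$. Hence, writing $p = \sum_{B} p_B\,[e_B]$ in $\PP(\bigwedge^k\K^n)$, the set of weights actually appearing in $p$ is exactly $\{\be_B : B\in\mathcal{B}(M_p)\}$.

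Now I would invoke the standard toric-geometry dictionary: for any point $q$ in the projectivisation of a $T$-representation $W=\bigoplus_\chi W_\chi$, if we write $q=\sum_{\chi\in S} q_\chi$ with $q_\chi\neq 0$ for $\chi\in S$, then $\overline{Tq}\subset \PP(W)$ is a (possibly non-normal) projective toric variety, and the lattice polytope associated to the resulting polarised toric variety is the convex hull $\conv\{\chi : \chi\in S\}$ (see, e.g.\ \cite{Cox, sturmfels1996grobner}). Applying this with $W=\bigwedge^k \K^n$ and $q=p$, the polytope of $\overline{Tp}$ is $\conv\{\be_B : B\in\mathcal{B}(M_p)\}$, which is precisely $P(M_p)$ by Definition \ref{basepolytope}.

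The main obstacle is the last step: pinning down exactly what ``the lattice polytope representing the projective toric variety'' means and matching conventions (normalisation of the polytope, the identification of characters with the lattice $\ZZ^E$, and the fact that even if $\overline{Tp}$ is not normal its associated polytope is still the convex hull of the weights of $p$). Once these conventions are set, the argument is essentially a bookkeeping exercise; the combinatorial content is entirely carried by the elementary identification ``$p_B\neq 0\Leftrightarrow B$ is a basis.''
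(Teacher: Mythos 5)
Your proposal is correct and follows essentially the same route as the paper: compute Pl\"ucker coordinates of $p$ as minors of $A$, observe that $p_B\neq 0$ precisely when $B$ is a basis of $M_p$, and then invoke the standard fact that the polytope of a torus orbit closure in $\PP(W)$ is the convex hull of the weights appearing in the point. The only cosmetic difference is that you phrase the last step via the weight decomposition of $\bigwedge^k\K^n$, whereas the paper phrases it via the monomial parameterisation $\phi:T\to\PP(\bigwedge^k\K^n)$; these are two ways of saying the same thing.
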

\begin{proof}
Let $A$ be the matrix, whose rows span the space corresponding to $p$.
The parameterisation of $\overline{Tp}$ is given by:
\[\phi:T\rightarrow \PP(\bigwedge^k \K^n).\]
The coordinates of the ambient space are indexed by $k$-element subsets of the $n$ columns of the matrix $A$. The coordinate indexed by $I$ of $\phi(t_1,\dots,t_n)$ equals $\prod_{i\in I} t_i$ times the $k\times k$ minor of $A$ determined by $I$. This is nonzero if and only if $I$ is a basis of $M$. Hence, the ambient space of $\overline{Tp}$ has coordinates indexed by basis elements of $M$. Further, up to acting by a diagonal invertible isomorphism inverting the nonzero minors, the map is given by monomials, corresponding to products of $t_i$'s that are in a basis. This is exactly the construction of the toric variety represented by $P(M)$.
\end{proof}
It is a major problem to provide the algebraic equations of $\overline{Tp}$. This is equivalent to finding integral relations among the basis of a matroid. We point out that matroids satisfy a `stronger' property then one could expect from the basis exchange axiom B2 of Lemma \ref{matroid}. Precisely, for any two bases $B_1,B_2\in\mathcal{B}$ and a subset $A\subset B_1-B_2$, there exists $A'\subset B_2-B_1$ such that $(B_1-A)\cup A'$ and $(B_2-A')\cup A\in\mathcal{B}$ \cite{multexchange}. 
This exactly translates to a binomial quadric (degree 2 polynomial) in the ideal of $\overline{Tp}$:
$x_{B_1}x_{B_2}-x_{(B_1-A)\cup A'}x_{(B_2-A')\cup A}$, where, as in the proof of Theorem \ref{thm:PM}, we label the coordinates by a basis of the matroid. Further, if $|A|=1$ we obtain special quadrics corresponding to exchanging one element in a pair of basis.
The following conjecture due to White provides a full set of generators for any matroid $M$.
\begin{conjecture} \label{conj:White}
The ideal of the toric variety represented by $P(M)$ is generated by the special quadrics corresponding to exchanging one element in a pair of basis.
\end{conjecture}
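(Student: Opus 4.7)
The plan is to reformulate the conjecture as a purely combinatorial statement about basis exchanges and then attempt a double induction. Following the proof of Theorem \ref{thm:PM}, the toric ideal $I_M$ of $\overline{Tp}$ is the kernel of the monomial map $x_B \mapsto \prod_{i\in B}t_i$, so a binomial $x_{B_1}\cdots x_{B_r} - x_{B_1'}\cdots x_{B_r'}$ lies in $I_M$ exactly when $\{B_1,\dots,B_r\}$ and $\{B_1',\dots,B_r'\}$ have the same union with multiplicity as elements of $E$. Thus Conjecture \ref{conj:White} is equivalent to the statement: any two multisets of bases with the same multiset-union can be transformed into one another by a sequence of \emph{single symmetric exchanges} $(B,B')\mapsto((B-e)\cup f,\ (B'-f)\cup e)$, where $e\in B-B'$, $f\in B'-B$ and both outputs are bases.

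A standard reduction (writing a long binomial as a telescoping product of quadratic ones) shows it suffices to treat $r=2$. So one must prove: given pairs $(B_1,B_2)$ and $(B_1',B_2')$ whose multiset-union agrees, they are connected by a chain of single exchanges. The natural inductive parameter is $|B_1\triangle B_1'|$, which I would try to strictly decrease. Pick $e\in B_1-B_1'$; since the multiset-unions agree, $e$ must appear in $B_2'$ (or with higher multiplicity there). By the symmetric exchange axiom B2 applied to $B_1,B_2$, there exists $f\in B_2-B_1$ with $(B_1-e)\cup f$ and $(B_2-f)\cup e$ both bases. If one could arrange $f\in B_1'$, then after the exchange the new pair $((B_1-e)\cup f,\ (B_2-f)\cup e)$ would be strictly closer to $(B_1',B_2')$ in $|B_1\triangle B_1'|$, and induction would finish the argument.

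The hard part is exactly this controlled choice of $f$. The bare symmetric exchange axiom guarantees \emph{some} $f$, not a specified one; and while multiple-element exchange (see the discussion preceding the conjecture) ensures the existence of a bijection $A\leftrightarrow A'$ between $B_1-B_2$ and $B_2-B_1$ realising a bulk swap to reach $(B_1',B_2')$, decomposing that bulk swap into a sequence of single swaps is precisely the conjecture, so invoking it would be circular. A reasonable line of attack is therefore a three-step programme: (i) verify the statement for classes where extra structure is available, e.g.\ graphic, transversal, or strongly base-orderable matroids, as has been done in the literature; (ii) use the Minkowski-sum Theorem \ref{thm:sumofpoly} to at least obtain projective normality (Theorem \ref{thm:White}), so that the question really is about quadrics and not higher-degree generators; and (iii) try to extract from the matroid-union construction in the proof of Theorem \ref{thm:sumofpoly} a constructive decomposition of a lattice point in $P(M_1)+P(M_2)$ (with $M_1=M_2=M$) into two bases that are provably joined by single exchanges, by running the covering argument inductively on $d$ in the proof.

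Carrying out step (iii) in full generality is the main obstacle, and is essentially why White's conjecture remains open; any approach that bypasses this combinatorial bottleneck would in particular give a new proof of the known special cases, and one should gauge progress against that benchmark.
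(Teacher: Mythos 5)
This statement is an open conjecture, not a theorem, and the paper never claims a proof; it explicitly notes immediately after the conjecture that it is not even known whether the ideal is generated in degree two, nor whether the degree-two part is spanned by the exchange quadrics. Your write-up correctly recognises this and is honest that it does not close the argument, so there is no gap to hide; the assessment of where the difficulty lies (controlling \emph{which} element $f$ the basis exchange axiom provides, and that the multiple-element exchange property cannot be invoked without circularity) is accurate and matches the state of the art.

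One point to correct: the claim that ``a standard reduction (telescoping) shows it suffices to treat $r=2$'' conflates two separate open problems. The telescoping argument, applied to a degree-$r$ binomial $x_{B_1}\cdots x_{B_r}-x_{B_1'}\cdots x_{B_r'}$, requires at each step that the intermediate products of Pl\"ucker variables again correspond to multisets of \emph{bases}, and that the whole chain stays inside the toric ring; this is exactly the assertion that the ideal is generated in degree at most two, which is itself unknown. So the $r=2$ combinatorial statement (any two pairs of bases with equal multiset-union are connected by single symmetric exchanges) is necessary but, as far as anyone knows, not sufficient: one must separately rule out higher-degree generators. The strongest currently available result in this direction, due to Laso\'n and Micha{\l}ek and cited in the paper, shows only that the exchange quadrics cut out the correct projective scheme (i.e.\ they generate the ideal up to saturation), not that they generate it on the nose. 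Your three-step programme is a sensible way to frame partial progress, but step (iii) as stated would, even if carried out, still leave the degree bound unaddressed.
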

We note that neither is it known that the ideal of this toric variety is generated by quadrics nor that all quadrics are spanned by the special quadrics described above. However, it is known that the special quadrics define the variety as a set (or more precisely as a projective scheme) \cite{jaMichal, lason2016toric}.

The combinatorial methods can be used to prove geometric properties of torus orbit closures in Grassmannians. Below we recall a definition of a normal lattice polytope.
\begin{definition}\label{Normal Polytope}
A lattice polytope $P$, containing $0$ and spanning (as a lattice) the lattice $N$ is \emph{normal} if and only if for any $k\in\Z+$ and any $p\in kP\cap N$ we have $p=p_1+\dots+p_k$ for some $p_i\in P\cap N$.
\end{definition}
Normality of a polytope is a very important notion as it corresponds to \emph{projective normality} of the associated toric variety \cite[Chapter 2]{Cox}, \cite{sturmfels1996grobner} (less formally, the associated toric variety is not very singular and is embedded in a particularly nice way in the projective space).
\begin{theorem}[White]\label{thm:White}
For any matroid $M$ the polytope $P(M)$ is normal. In particular, any torus orbit closure in any Grassmannian is projectively normal.
\end{theorem}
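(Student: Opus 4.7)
The plan is to deduce this theorem as an essentially immediate consequence of Theorem \ref{thm:sumofpoly}, followed by the standard dictionary between polytope normality and projective normality of toric varieties.

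First, I would apply Theorem \ref{thm:sumofpoly} with $\pM_1 = \pM_2 = \cdots = \pM_k = M$, regarding the matroid $M$ as a $1$-polymatroid. Since the Minkowski sum satisfies $P(M) + P(M) + \cdots + P(M) = kP(M)$, the theorem tells us that every lattice point $p \in kP(M)$ can be written as $p = p_1 + p_2 + \cdots + p_k$ with each $p_i$ a lattice point of $P(M)$. This is exactly the Minkowski decomposition condition appearing in Definition \ref{Normal Polytope}; the requirement there that the polytope contain the origin is a normalization that can be arranged after a lattice translation and plays no role in the substantive decomposition property.

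Second, I would translate this combinatorial fact into the geometric statement. By Theorem \ref{thm:PM}, any torus orbit closure $\overline{Tp} \subset G(k,n)$ is isomorphic to the projective toric variety associated with the lattice polytope $P(M_p)$. It is a standard fact in toric geometry (see for instance \cite[Chapter 2]{Cox} or \cite{sturmfels1996grobner}) that the toric variety $X_P$ embedded via a lattice polytope $P$ is projectively normal if and only if $P$ is normal in the above sense, so normality of $P(M_p)$ immediately yields projective normality of $\overline{Tp}$.

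The only nontrivial ingredient is the appeal to Theorem \ref{thm:sumofpoly}, whose proof uses the matroid union theorem \ref{thm:MUT}, Edmonds' basis-covering corollary \ref{cor:cover}, and a reduction from polymatroids to matroids (via a doubling trick replacing each point in the ground set by several equivalent copies). Once these combinatorial tools are granted, the deduction of Theorem \ref{thm:White} is purely formal; no further combinatorial work on matroids themselves is required here.
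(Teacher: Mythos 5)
Your proof is correct and follows exactly the same approach as the paper, which deduces the statement as a special case of Theorem~\ref{thm:sumofpoly} by taking all $\pM_i$ equal to $M$. Your additional spelling-out of the Minkowski-sum identity $kP(M)=P(M)+\cdots+P(M)$ and of the standard correspondence between polytope normality and projective normality is accurate and merely fills in details the paper leaves implicit.
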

\begin{proof}
This is a special case of Theorem \ref{thm:sumofpoly}, where we take all $M_i$ equal to $M$.
\end{proof}



\section{Introduction to flag matroids}


In Section \ref{sec:repmat} we explained a correspondence between torus orbits in a Grassmannian (geometric objects) and representable matroids (combinatorial objects). We will generalise this correspondence in different ways. For instance, on the geometry side, we can replace Grassmannians with flag varieties. On the combinatorics side, this naturally leads to the notion of a (representable) \emph{flag matroid}.
Flag matroids first arose as a special case of the so-called \emph{Coxeter matroids}, introduced by Gelfand and Serganova \cite{gelfand1, gelfand2}. In this section we first give a combinatorial introduction to flag matroids. Afterwards, we explain how they are related to flag varieties. The exposition is largely based on Chapter 1 of \cite{borovik}.
\subsection{Flag matroids: Definition} \label{subsec:FlagMatroids}
We start by defining flag matroids in the way they are usually defined in the literature: using Gale orderings.
\begin{definition}
	Let $0 < k_1 \leq \ldots \leq k_s < n$ be natural numbers. Let $\bk=(k_1,\ldots,k_s)$. A \emph{flag $F$ of rank $\bk$ on $E$} is an increasing sequence
	\[
	F^1 \subseteq F^2 \subset \cdots \subseteq F^s
	\]
	of subsets of $E$ such that $|F^i|=k_i$ for all $i$.
	The set of all such flags will be denoted by $\mathcal{F}^{\bk}_E$.
\end{definition}
Let $\omega$ be a linear ordering on $E$. We can extend the Gale ordering $\leq_{\omega}$ to flags:
\[
(F^1,\ldots,F^s) \leq_{\omega} (G^1,\ldots,G^s) \text{ if and only if } F^i \leq_{\omega} G^i \text{ for all } i\text{.}
\]
\begin{definition}
	A \emph{flag matroid of rank $\bk$ on $E$} is a collection $\mathcal{F}$ of flags in $\mathcal{F}^{\bk}_E$, which we call \emph{bases} satifying the following property: for every linear ordering $\omega$ on $E$, the collection $\mathcal{F}$ contains a unique element which is maximal in $\mathcal{F}$ with respect to the Gale ordering $\leq_{\omega}$.
\end{definition}
If $\mathcal{F}$ is a flag matroid, the collection $\{F^i | F \in \mathcal{F} \}$ is called the \emph{$i$-th constituent} of $\mathcal{F}$. This is clearly a matroid (of rank $k_i$).
\begin{remark}
	In the literature it is usally required that we have strict inequalities $0 < k_1 < \ldots < k_s < n$. From a combinatorial point of view this does not make a difference, but when we later consider flag matroid polytopes this restriction would appear artificial.
\end{remark}
Next, we want to describe which tuples of matroids can arise as the constituents of a flag matroid. In order to give this characterisation, we first need to recall \emph{matroid quotients}.
\subsection{Matroid quotients}
\begin{definition} \label{def:quotient}
	Let $M$ and $N$ be matroids on the same ground set $E$. We say that $N$ is a \emph{quotient} of $M$ if one of the following equivalent statements holds:
	\begin{enumerate}[label=(\roman*)]
		\item \label{item:circuit} Every circuit of $M$ is a union of circuits of $N$.
		\item \label{item:rank} If $X\subseteq Y \subseteq E$, then $r_M(Y)-r_M(X) \geq r_N(Y)-r_N(X)$.
		\item \label{item:bigmatroid} There exists a matroid $R$ and a subset $X$ of $E(R)$ such that $M=R\backslash X$ and $N=R/X$. 
		\item For all bases $B$ of $M$, $x \notin B$, there is a basis $B'$ of $N$ with $B' \subseteq B$ and such that $\{y: (B'-y)\cup x \in \mathcal{B}(N)\} \subseteq \{y: (B-y)\cup x \in \mathcal{B}(M)\}$. 
	\end{enumerate}
\end{definition}
For the equivalence of \ref{item:circuit}, \ref{item:rank} and \ref{item:bigmatroid}, we refer to \cite[Proposition 7.3.6]{oxley}.

Here are some basic properties of matroid quotients:
\begin{prop}
	Let $N$ be a quotient of $M$.
	\begin{enumerate}[label=(\roman*)]
		\item Every basis of $N$ is contained in a basis of $M$, and every basis of $M$ contains a basis of $N$.
		\item $\rk(N) \leq \rk(M)$ and in case of equality $N=M$.
	\end{enumerate}
\end{prop}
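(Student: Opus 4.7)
The plan is to work entirely from the rank-function characterisation of matroid quotients (item~\ref{item:rank} of Definition~\ref{def:quotient}): for every $X \subseteq Y \subseteq E$,
\[
r_M(Y) - r_M(X) \;\geq\; r_N(Y) - r_N(X).
\]
Both parts of the proposition reduce to choosing the pairs $(X,Y)$ cleverly and combining the resulting inequalities with the basic rank axioms R1--R3.

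For part (ii), specialising to $X = \varnothing$, $Y = E$ gives $\rk(M) \geq \rk(N)$ at once, since $r_M(\varnothing) = r_N(\varnothing) = 0$. To handle the equality case, assume $\rk(M) = \rk(N)$ and, for an arbitrary $Y \subseteq E$, apply the quotient inequality twice: once with $(X,Y) \mapsto (\varnothing, Y)$ to obtain $r_M(Y) \geq r_N(Y)$, and once with $(X,Y) \mapsto (Y, E)$ to obtain $r_N(Y) - r_M(Y) \geq \rk(N) - \rk(M) = 0$. These two inequalities force $r_M(Y) = r_N(Y)$ for every $Y$, so $M = N$.

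For part (i), let $B_N$ be a basis of $N$ and plug $(X,Y) = (\varnothing, B_N)$ into the quotient inequality, which yields $r_M(B_N) \geq r_N(B_N) = |B_N|$. Combined with rank axiom R1, this gives $r_M(B_N) = |B_N|$, so $B_N$ is independent in $M$ and hence extends to a basis of $M$. Conversely, let $B_M$ be a basis of $M$ and plug in $(X,Y) = (B_M, E)$; the inequality becomes
\[
0 \;=\; \rk(M) - r_M(B_M) \;\geq\; \rk(N) - r_N(B_M),
\]
so $r_N(B_M) = \rk(N)$ and any maximal $N$-independent subset of $B_M$ is a basis of $N$ contained in $B_M$.

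There is no serious obstacle here: the argument is essentially two instances of the submodular-like bound defining a quotient. The only ``insight'' needed is to recognise that evaluating the inequality at the extremes $(X,Y) = (\varnothing, B_N)$ and $(X,Y) = (B_M, E)$ converts the abstract quotient condition into concrete statements about bases, and to iterate the inequality once more to pin down the equality case of (ii).
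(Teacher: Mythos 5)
Your proof is correct and takes essentially the same route as the paper: the paper's proof is just the terse remark that both statements follow ``by plugging in $Y=E$ or $X=\emptyset$'' into the rank-function characterisation of quotients, and your argument is precisely that computation carried out in full.
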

\begin{proof}
 Both statements can be easily deduced by plugging in $Y=E$ or $X=\emptyset$ in Definition \ref{def:quotient} \ref{item:rank}.
\end{proof}
The next result will be essential for defining representable flag matroids. It also explains where the term ``matroid quotient" comes from --
 below we think of $W$ as a vector space quotient of $V$.
\begin{prop}[\kern-0.5em {\cite[Proposition 7.4.8 (2)]{white}}] \label{prop:realconcor}
Let $V$ and $W$ be vector spaces and $\psi: E \to V$ be a map. Furthermore, let $f:V \to W$ be a linear map. Consider the matroid $M$ represented by $\psi$, and the matroid $N$ represented by $f \circ \psi$. Then $N$ is a matroid quotient of $M$.
\end{prop}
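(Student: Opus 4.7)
The cleanest approach will be to verify condition \ref{item:rank} of Definition \ref{def:quotient}, i.e.\ that for every pair $X \subseteq Y \subseteq E$ one has
\[
r_M(Y)-r_M(X) \;\geq\; r_N(Y)-r_N(X).
\]
The other characterisations (covering circuits by circuits, or exhibiting $M,N$ as a deletion/contraction of a common bigger matroid) look more awkward to produce directly from a linear map, whereas the rank inequality falls out of elementary linear algebra.

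The plan is to rewrite both sides in terms of dimensions of subspaces and then compare them via the kernel of $f$. By definition $r_M(Z)=\dim\langle\psi(Z)\rangle$ and $r_N(Z)=\dim\langle f(\psi(Z))\rangle$ for every $Z\subseteq E$. Since $f$ restricted to $\langle\psi(Z)\rangle$ is a linear map onto $\langle f(\psi(Z))\rangle$, the rank--nullity theorem gives
\[
r_M(Z)-r_N(Z) \;=\; \dim\bigl(\ker f \,\cap\, \langle\psi(Z)\rangle\bigr).
\]
Call this quantity $d(Z)$. First I would observe that $d$ is monotone: if $X\subseteq Y$ then $\langle\psi(X)\rangle\subseteq\langle\psi(Y)\rangle$, so $\ker f\cap\langle\psi(X)\rangle\subseteq\ker f\cap\langle\psi(Y)\rangle$ and hence $d(X)\leq d(Y)$. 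Rearranging $r_M-r_N=d$ at $X$ and $Y$ then yields exactly the desired inequality.

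The only subtle point—which I would make explicit to avoid sloppiness—is the identification $r_N(Z)=\dim\langle f(\psi(Z))\rangle = \dim f(\langle\psi(Z)\rangle)$; this holds because $f$ is linear, so the image of the span equals the span of the images. Once this is noted, the argument is two lines and there is no real obstacle: the only thing one has to resist is the temptation to argue via bases (which would force one into the basis-exchange characterisation \ref{item:circuit}/(iv) and complicate matters unnecessarily). In particular, this proof works verbatim whether or not $\psi$ is injective, matching the convention on representable matroids adopted earlier in the paper.
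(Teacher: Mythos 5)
Your proof is correct. The paper does not prove this proposition itself---it simply cites \cite[Proposition 7.4.8(2)]{white}---so there is no internal argument to compare against. Your route via condition~(ii) of Definition~\ref{def:quotient} is the standard one: writing $r_M(Z)-r_N(Z)=\dim\bigl(\ker f\cap\langle\psi(Z)\rangle\bigr)$ by rank--nullity and observing that this quantity is monotone in $Z$ immediately yields the rank inequality, and you correctly flag the one identification that needs care, namely $\langle f(\psi(Z))\rangle=f(\langle\psi(Z)\rangle)$, which holds by linearity of $f$. The argument is complete and works without any injectivity assumption on $\psi$, consistent with the paper's conventions.
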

\begin{example}
	If $R$ is a representable matroid on $E$ and $X$ is a subset of $E$, then $M:=R\backslash X$ and $N:=R/X$ are representable matroids, and there is a linear map as in Proposition \ref{prop:realconcor}. Indeed, if $R$ is represented by $\psi: E\to V$, then consider the projection $\pi: V \to V/{\langle \psi(X) \rangle}$. It is not hard to see that $M$ is represented by $\restr{\psi}{E-X}$ and that $N$ is represented by $\pi \circ \restr{\psi}{E-X}$.
\end{example}
\begin{remark}
	The converse of Proposition \ref{prop:realconcor} is false: we now give an example (taken from \cite[Section 1.7.5]{borovik}) of two representable matroids $M$ and $N$ such that $N$ is a quotient of $M$, but there is no map as in Proposition \ref{prop:realconcor}.
\end{remark}
\begin{example} \label{eg:nonrepflag}
	Let $M$ be the rank-3 matroid on $[8]$ represented by the following matrix
	\[
	\begin{pmatrix}
	1 & 0 & 1 & 0 & 1 & 1 & 0 & 1 \\
	0 & 1 & 1 & 0 & 2 & 2 & 2 & 1 \\
	0 & 0 & 0 & 1 & 1 & 2 & 1 & 1
	\end{pmatrix}
	\]
	and let $N$ be the rank-2 matroid on $[8]$ whose bases are all $2$-element subsets except for $\{2,6\}$ and $\{3,5\}$. It is easy to see that $N$ is a representable matroid: just pick six pairwise independent vectors in the plane,
	and map $2$ and $6$, as well as $3$ and $5$, to the same vector.
	Now $N$ is a matroid quotient of $M$, since the matroid $R$ from Example \ref{eg:nonpappus} satisfies $M=R\backslash 9$ and $N=R/9$. 
	However, it is not possible to find representations $V$ (resp.\ $W$) of $M$ (resp.\ $N$) such that there is a map $f:V \to W$ as in Proposition \ref{prop:realconcor}. Roughly speaking, the problem is that the ``big" matroid $R$ is not representable. For a more precise argument, see \cite[Section 1.7.5]{borovik}.
\end{example}
\subsection{Representable flag matroids}
We now come to the promised characterisation of constituents of flag matroids. In fact, it will turn out we can use it as an alternative definition of flag matroids.\\
We call a collection $(M_1,\ldots,M_s)$ of matroids \emph{concordant} if for every pair $(M_i, M_j)$, either $M_i$ is a quotient of $M_j$ or vice versa. Note that this is equivalent to the fact that they can be ordered in such a way that $M_i$ is a quotient of $M_{i+1}$.
\begin{theorem}[\kern-0.5em {\cite[Theorem 1.7.1]{borovik}}]
	A collection $\mathcal{F}$ of flags in $\mathcal{F}^{\bk}_E$ is a flag matroid if and only if the following three conditions hold:
	\begin{enumerate}
		\item Every constituent $M_i := \{F^i | F \in \mathcal{F} \}$ is a matroid.
		\item The matroids $M_1,\ldots,M_s$ are concordant.
		\item Every flag $B_1 \subseteq \ldots \subseteq B_s$, with $B_i$ a basis of $M_i$, is in $\mathcal{F}$.
	\end{enumerate}
\end{theorem}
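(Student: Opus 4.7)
The proof plan splits into the two implications, with the greedy algorithm as the unifying tool. The key preliminary observation, which is essentially the proof of Gale's theorem, is that for any matroid $M$ on $E$ and ordering $\omega$, the unique $\leq_\omega$-maximum basis $B^\omega(M)$ is produced by the greedy algorithm that scans $E$ from top to bottom and adds each element if it preserves independence. Writing $S_j$ for the top $j$ elements under $\omega$, this yields the clean identity $|B^\omega(M) \cap S_j| = r_M(S_j)$, and more generally $|B^\omega(M) \cap T| = r_M(T)$ whenever $T$ is an upward-closed initial segment.

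For the ``only if'' direction, let $F^\omega$ denote the unique $\omega$-maximum of $\mathcal{F}$. Condition (1) is essentially free: given any $G^i$ in the $i$-th constituent, lift to some $G \in \mathcal{F}$ and note $G^i \leq_\omega F^\omega_i$, so $F^\omega_i$ is an $\omega$-maximum of $M_i$ for every $\omega$, and Gale's theorem applies. For (2), fix $i<j$ and $X \subseteq Y \subseteq E$ and choose $\omega$ placing $X$ highest, then $Y \setminus X$, then the rest. The greedy identity gives $|F^\omega_i \cap X| = r_{M_i}(X)$ and $|F^\omega_i \cap Y| = r_{M_i}(Y)$, and analogously for $F^\omega_j$; since $F^\omega_i \subseteq F^\omega_j$, subtracting yields $r_{M_j}(Y) - r_{M_j}(X) \geq r_{M_i}(Y) - r_{M_i}(X)$, which is exactly characterization \ref{item:rank} of $M_i$ being a quotient of $M_j$. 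For (3), given bases $B_1 \subseteq \ldots \subseteq B_s$ of the $M_i$, I would pick an $\omega$ placing $B_1$ on top, then $B_2 \setminus B_1$, then $B_3 \setminus B_2$, and so on; greedy then outputs $B_i$ as the $\omega$-maximum of each $M_i$, so the $\omega$-maximum flag of $\mathcal{F}$ must equal $(B_1,\ldots,B_s)$, placing it in $\mathcal{F}$.

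For the ``if'' direction, assume (1)--(3) and fix $\omega$. Each $M_i$ has its own $\omega$-maximum basis $B_i^\omega$, and the task is to show that these stack into a flag; given this, (3) puts the flag into $\mathcal{F}$, and it manifestly dominates every other flag in $\mathcal{F}$ componentwise. The central lemma I plan to use is: if $N$ is a quotient of $M$, then $B^\omega(N) \subseteq B^\omega(M)$ for every $\omega$. This is a one-line greedy argument: $e \in B^\omega(N)$ at step $j$ iff $r_N(S_j) - r_N(S_{j-1}) = 1$, and the quotient inequality \ref{item:rank} then forces $r_M(S_j) - r_M(S_{j-1}) \geq 1$, which combined with the general bound $r_M(S_j) - r_M(S_{j-1}) \leq 1$ gives equality, i.e.\ $e \in B^\omega(M)$. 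Iterating along the concordant chain produces the desired flag $B_1^\omega \subseteq \ldots \subseteq B_s^\omega$.

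The subtlest step, which is really the crux of the theorem, is the concordance derivation (2) in the forward direction: one must arrange a single ordering that simultaneously exposes $r_{M_i}$ and $r_{M_j}$ on both $X$ and $Y$, which is precisely what the initial-segment ordering above accomplishes. Everything else -- Gale's matroid criterion for (1), the realisation of an arbitrary bases-flag as the greedy output for some $\omega$ in (3), and the quotient-to-containment greedy lemma driving the backward direction -- is quite mechanical once this idea is in hand.
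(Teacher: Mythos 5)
The paper states this theorem as a cited result (Borovik--Gelfand--White, Theorem 1.7.1) without giving a proof, so there is no in-paper argument to compare your proposal against. On its own merits, your proof plan is correct and complete. The greedy identity $|B^\omega(M)\cap S_j|=r_M(S_j)$ on initial segments $S_j$ of the ordering is exactly the right engine. In the forward direction: (1) follows from Gale's criterion once you observe that the $i$-th component $F^\omega_i$ of the $\omega$-maximum flag dominates every basis of $M_i$ (since every basis of $M_i$ is a component of some flag in $\mathcal{F}$); for (2), choosing $\omega$ to make $X$ and then $Y$ initial segments and using $F^\omega_i\subseteq F^\omega_j$ to compare $|F^\omega_i\cap(Y-X)|\le |F^\omega_j\cap(Y-X)|$ gives precisely the rank characterisation of $M_i$ being a quotient of $M_j$; and for (3), the block ordering $B_1,\;B_2-B_1,\;\ldots$ forces greedy on each $M_i$ to return $B_i$ (the scanned prefix $B_j$ is independent in $M_i$ for $j\le i$, and after that the rank is saturated), so the $\omega$-maximum flag of $\mathcal{F}$ is $(B_1,\ldots,B_s)$. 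In the converse direction, your one-line lemma that $N$ a quotient of $M$ implies $B^\omega(N)\subseteq B^\omega(M)$ (via $r_N(S_j)-r_N(S_{j-1})=1\Rightarrow r_M(S_j)-r_M(S_{j-1})=1$) correctly stacks the greedy bases into a flag, which (3) places in $\mathcal{F}$ and which then dominates all of $\mathcal{F}$ componentwise; uniqueness is automatic from antisymmetry of $\leq_\omega$. One small thing worth making explicit in a write-up: in (2) you tacitly identify $F^\omega_i$ with the greedy basis $B^\omega(M_i)$ before invoking the greedy identity; this is justified by the uniqueness of the $\omega$-maximum in the matroid $M_i$, which you have already established in (1).
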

In other words, flag matroids on $E$ are in one-to-one correspondence with tuples of concordant matroids on $E$.

We can now define representable flag matroids:
let $0 \subsetneq V_1 \subseteq \cdots \subseteq V_s \subsetneq \K^n$ be a flag of subspaces of $\K^n$. Then, viewing $V_i$ as a point in $G(k_i,n)$, it holds that $(M_{V_1},\ldots,M_{V_s})$ is a concordant collection of matroids by Proposition \ref{prop:realconcor}. Indeed: if $V_i \subseteq V_j$, we can pick a $k_j \times n$ matrix $A_j$ representing $V_j$ such that the first $k_i$ rows of $A_j$ span $V_i$. Then the columns of $A_i$ are obtained from the columns of $A_j$ by deleting the last $k_j-k_i$ entries. 
\begin{definition} \label{def:repFlagMatroid}
The \emph{representable flag matroid} $\mathcal{F}(V_1 \subseteq \cdots \subseteq V_s)$ is the unique flag matroid whose constituents are $M_{V_1},\ldots,M_{V_s}$.
\end{definition}
\begin{remark}
	Example \ref{eg:nonrepflag} shows that it can happen that all constituents of a flag matroid are representable matroids, but still the flag matroid is not representable (because the matroid representations are ``not compatible").
\end{remark}
\subsection{Flag matroid polytopes}
\begin{definition}
Given a flag $F$ on $[n]$, we can identify each constituent with a $0,1$-vector and then add them all up to a vector $\be_F \in \Z+^n$. In this way we have identified the flags in $[n]$ of rank $\bk$ with integer vectors with $k_1$ entries equal to $m$, $k_2-k_1$ entries equal to $m-1$, \ldots, $k_{i+1}-k_i$ entries equal to $m-i$, \ldots, $k_s-k_{s-1}$ entries equal to $1$, and $n-k_s$ entries equal to 0. We will refer to such vectors as \emph{rank-$\bk$ vectors}. 
In other words, if we think of $\bk$ as a partition of length $s$, we can consider the conjugate partition $\bk^*$ of length $\leq n$. Then a rank-$\bk$ vector is a vector $v \in \Z+^n$  obtained from $\bk^*$ by adding $0$'s and permuting the entries.
\end{definition}
\begin{definition}  \label{def:flagMatroidPolytope}
	The \emph{base polytope} of a flag matroid $\mathcal{F}$ on $[n]$ is the convex hull of the set $\{\be_F \ | \ F\in \mathcal{F}\} \subset \RR^n$.
\end{definition}
\begin{example} \label{eg:flagmatroid}
	Let $\mathcal{F}$ be the rank $(1,2)$ flag matroid on $[3]$ whose bases are $1 \subseteq 12$, $1 \subseteq 13$, $2 \subseteq 12$ and $3 \subseteq 13$. Then its base polytope is the convex hull of the points $(2,1,0),(2,0,1),(1,2,0),(1,0,2)$. Its constituents are the uniform rank $1$ matroid on $[3]$, and the rank $2$ matroid with bases $12$ and $13$. The base polytope of this flag matroid is depicted in Figure \ref{fig:flagMatroidPolytope}. $\mathcal{F}$ is a representable flag matroid: a representing flag is for example $\langle \be_1 + \be_2 + \be_3 \rangle \subset \langle \be_1, \be_2 + \be_3 \rangle \subset \K^3$.
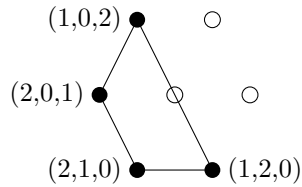
\begin{figure}[h]
	\begin{tikzpicture}
	\coordinate (a) at (0,0);
	\coordinate (b) at (1,0);
	\coordinate (c) at (-0.5,1);
	\coordinate (d) at (0.5,1);
	\coordinate (e) at (1.5,1);
	\coordinate (f) at (0,2);
	\coordinate (g) at (1,2);
	\node (1) at (-0.7,0) {(2,1,0)};
	\node (2) at (-1.2,1) {(2,0,1)};
    \node (3) at (-0.7,2) {(1,0,2)};
    \node (4) at (1.7,0) {(1,2,0)};
	\draw (a) -- (c) -- (f)--(b)--(a);
	\foreach \pt in {a,b,c,f} \fill[black] (\pt) circle (3pt);
	\foreach \pt in {e,d,g}  \draw (\pt) circle (3pt);
	
	\end{tikzpicture}
		\caption{A flag matroid base polytope} \label{fig:flagMatroidPolytope}
\end{figure}
\end{example}

\begin{theorem}[\kern-0.5em {\cite[Theorem 1.11.1]{borovik}}] \label{thm:flagMatroidBasePolytope}
	A lattice polytope $P \subset \RR^n$ is the base polytope of a rank-$\bk$ flag matroid on $[n]$ if and only if the following two conditions hold:
		\begin{enumerate}
		\item Every vertex of $P$ is a rank-$\bk$ vector.
		\item Every edge of $P$ parallel to $\be_i-\be_j$ for some $i,j \in [n]$.
	\end{enumerate}
\end{theorem}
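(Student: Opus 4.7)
The plan is to reduce both directions to Theorem~\ref{thm:edges} via the Minkowski sum identity
\begin{equation*}
P(\mathcal{F}) = P(M_1) + \cdots + P(M_s),
\end{equation*}
where $M_1, \ldots, M_s$ are the constituents of $\mathcal{F}$. Illustrative evidence for this identity is provided by Example~\ref{eg:flagmatroid}, where direct computation confirms it.

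For the forward direction, the vertex condition is immediate from Definition~\ref{def:flagMatroidPolytope}, since vertices of $P(\mathcal{F})$ lie among the rank-$\bk$ vectors $\{\be_F : F \in \mathcal{F}\}$. The edge condition will follow once the Minkowski sum identity is established. The inclusion $P(\mathcal{F}) \subseteq \sum_i P(M_i)$ is immediate, since $\be_F = \sum_i \be_{F^i}$ with each $\be_{F^i} \in P(M_i)$. For the reverse inclusion, the key claim is that for any tuple of bases $B_i$ of $M_i$ (not necessarily forming a nested flag), the sum $\sum_i \be_{B_i}$ lies in $P(\mathcal{F})$. I would prove this by induction on the number of pairs $(i,j)$ with $i<j$ and $B_i \not\subseteq B_j$: concordance gives that $M_i$ is a quotient of $M_{i+1}$, so every basis of $M_i$ extends to a basis of $M_{i+1}$; a suitable basis exchange then produces a convex combination of tuples with strictly fewer non-nesting violations. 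Once the Minkowski identity is in place, the edge condition follows from the general fact that each edge of a Minkowski sum of polytopes is parallel to an edge of some summand, combined with Theorem~\ref{thm:edges} applied to each $P(M_i)$.

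For the backward direction, let $P$ satisfy (1) and (2). Each vertex $v$ of $P$ is a rank-$\bk$ vector, and so decomposes uniquely as $\be_F$ for a flag $F$ via $F^i = \{l : v_l \geq s-i+1\}$. Let $\mathcal{F}$ be the set of flags obtained in this way, set $M_i := \{F^i : F \in \mathcal{F}\}$, and $P_i := \conv\{\be_B : B \in M_i\}$. I would aim to show that $P = P_1 + \cdots + P_s$, that each $M_i$ is a matroid, and that the $M_i$ are concordant, so that $\mathcal{F}$ is the flag matroid with constituents $M_i$ and $P = P(\mathcal{F})$. The $0/1$ vertex condition on each $P_i$ is automatic. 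The edge condition on each $P_i$ is the delicate step: I would study the piecewise-linear map $\psi_i \colon v \mapsto \be_{F^i(v)}$ and check that $\psi_i$ sends any edge of $P$ parallel to some $\be_a - \be_b$ either to a point or to an edge of $P_i$ parallel to $\be_a - \be_b$, and that every edge of $P_i$ arises in this way. Theorem~\ref{thm:edges} then identifies each $M_i$ with a matroid. Concordance of the $M_i$ and the fact that every compatible flag is a basis of $\mathcal{F}$ will follow from the Minkowski decomposition, together with the uniqueness of the flag associated to each rank-$\bk$ vertex.

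The principal obstacle in both directions is the Minkowski sum identity: the reverse inclusion in the forward direction is a nontrivial exchange statement that requires careful use of concordance, and in the backward direction one must argue that the constituent polytopes $P_i$ inherit the edge condition from $P$ despite the non-linearity of the projection $\psi_i$. Once these polytopal facts are in place, the remaining combinatorial verifications reduce to Theorem~\ref{thm:edges} and standard properties of matroid quotients from Definition~\ref{def:quotient}.
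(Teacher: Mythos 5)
The paper offers no proof here: Theorem~\ref{thm:flagMatroidBasePolytope} is simply cited as \cite[Theorem 1.11.1]{borovik}, so there is no in-paper argument to compare against. What you propose is therefore an independent reconstruction, and the route you choose — reduce everything to Theorem~\ref{thm:edges} via the Minkowski identity $P(\mathcal{F})=P(M_1)+\cdots+P(M_s)$ — is in fact a theorem the paper \emph{also} only cites (as \cite[Corollary 1.13.5]{borovik}), stated immediately afterwards. In the source book the Minkowski identity is numbered later than 1.11.1, so the original proof almost certainly does not go through it; you are taking a genuinely different path.

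Your forward direction is essentially sound, though the induction you sketch for $\sum_i P(M_i)\subseteq P(\mathcal{F})$ is more than needed: since vertices of a Minkowski sum arise as sums of vertices maximising a common generic functional, one only has to observe that the $\omega$-maximal bases of concordant matroids automatically nest (this is built into the flag matroid axiom), so every vertex of $\sum_i P(M_i)$ is $\be_F$ for some $F\in\mathcal{F}$. Your general claim about arbitrary non-nested tuples is true (it is Remark~\ref{rmk:interiorFlag}) but harder, and you do not need it.

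The backward direction has a real gap. You define $P_i$ as the convex hull of the $i$-th projections $\psi_i(v)$ of vertices of $P$, and want to (a) prove $P=P_1+\cdots+P_s$, (b) prove each $P_i$ has the edge property of Theorem~\ref{thm:edges}, and (c) deduce concordance and the flag-closure property. You flag (b) as ``the delicate step'' but offer no mechanism, and in fact neither (a) nor (c) comes for free either: $\psi_i$ is nonlinear, edges of $P_i$ need not be images of edges of $P$, and concordance (a \emph{quotient} relation, Definition~\ref{def:quotient}(iv)) is strictly stronger than the basis-containment that the vertex flags give you directly. A cleaner and self-contained route for this direction bypasses the Minkowski decomposition entirely and verifies the Gale maximality definition head-on: for a linear order $\omega$, take a compatible generic functional $c$; by condition (2) it is nonconstant on every edge, so it attains its maximum at a unique vertex $\be_F$. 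To see that this $F$ is $\omega$-Gale-maximal among all vertex flags $G$, take a $c$-monotone edge walk from $\be_G$ to $\be_F$. Along an edge in direction $\be_i-\be_j$ with $c_i>c_j$ one checks, directly from the formula $F^l(v)=\{m: v_m\geq s-l+1\}$, that every constituent can only change by the exchange $(F^l-j)\cup i$, which weakly increases in the Gale order precisely when $i>_\omega j$. Hence $G\leq_\omega F$, giving uniqueness of the Gale maximum, and $\mathcal{F}$ is a flag matroid by definition. This simultaneously handles the matroid, concordance, and flag-closure requirements that your version leaves open.
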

\begin{theorem}[\kern-0.5em {\cite[Corollary 1.13.5.]{borovik}}]
	The polytope of a flag matroid is the Minkowski sum of the matroid polytopes of its constituent matroids.
\end{theorem}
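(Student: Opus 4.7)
The plan is to prove the equality $P(M_1)+\cdots+P(M_s)=P(\mathcal{F})$ by verifying both inclusions.

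The inclusion $P(\mathcal{F})\subseteq P(M_1)+\cdots+P(M_s)$ is immediate from the definitions: every generator $\be_F$ of $P(\mathcal{F})$ decomposes as $\be_F=\be_{F^1}+\cdots+\be_{F^s}$, with $\be_{F^i}$ a vertex of the constituent matroid polytope $P(M_i)$ since $F^i$ is a basis of $M_i$. Passing to convex hulls on both sides gives the inclusion.

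For the reverse inclusion I would argue vertex by vertex. Any vertex $v$ of the Minkowski sum $P(M_1)+\cdots+P(M_s)$ is the unique maximiser of some linear functional $w\in\RR^n$, and therefore decomposes uniquely as $v=v_1+\cdots+v_s$ with $v_i$ the unique maximiser of $w$ on $P(M_i)$. After perturbing $w$ if necessary, its coordinates are pairwise distinct and induce a linear ordering $\omega$ on $[n]$; the greedy algorithm for matroids then identifies $v_i$ with $\be_{B_i}$, where $B_i$ is the Gale-maximum basis of $M_i$ with respect to $\leq_\omega$.

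The core technical step is the following \emph{nesting lemma}: if $N$ is a quotient of $M$, then the Gale-maximum basis of $N$ is contained in the Gale-maximum basis of $M$. I would prove it using the closure characterisation of the Gale maximum: an element $e\in E$ lies in the Gale-maximum basis of a matroid $P$ exactly when $r_P(Y)-r_P(X)=1$, where $X:=\{f\in E: f>_\omega e\}$ and $Y:=X\cup\{e\}=\{f\in E: f\geq_\omega e\}$. Assume $e\in B_N$, so that $r_N(Y)-r_N(X)=1$. The rank inequality in part \ref{item:rank} of Definition \ref{def:quotient} then forces $r_M(Y)-r_M(X)\geq r_N(Y)-r_N(X)=1$, and since $|Y\setminus X|=1$ we also have $r_M(Y)-r_M(X)\leq 1$; equality therefore holds and $e\in B_M$.

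Applying the nesting lemma to the pairwise concordance relations among $M_1,\ldots,M_s$ yields $B_1\subseteq B_2\subseteq\cdots\subseteq B_s$, so $F:=(B_1,\ldots,B_s)$ is a flag on $[n]$ of rank $\bk$. By the third condition of the characterisation theorem for flag matroids (stated just before Definition \ref{def:repFlagMatroid}), which says that every flag of bases of the constituents belongs to $\mathcal{F}$, we get $F\in\mathcal{F}$, and therefore $v=\be_{B_1}+\cdots+\be_{B_s}=\be_F\in P(\mathcal{F})$. This closes the argument; the main delicate point is the nesting lemma, which is precisely where the quotient hypothesis (and hence the concordance of the constituents) is used.
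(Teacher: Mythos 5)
Your proof is correct and matches the standard argument from Borovik--Gelfand--White (which the paper cites without reproducing): the forward inclusion is definitional, and the reverse is handled by identifying vertices of the Minkowski sum with sums of Gale maxima for a generic functional and then invoking the nesting property of Gale-maximum bases under matroid quotient. The nesting lemma as you state it, via the greedy/rank-difference characterisation $e\in B_P \iff r_P(\{f\geq_\omega e\})-r_P(\{f>_\omega e\})=1$ combined with condition (ii) of Definition \ref{def:quotient}, is exactly where concordance enters, and your argument there is complete.
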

Thus, each flag matroid defines a polytope that is a  base polytope of a polymatroid.
\begin{remark} \label{rmk:interiorFlag}
	It follows from Theorem \ref{thm:sumofpoly} and the previous theorem that the lattice points of a flag matroid polytope correspond to tuples (not necessarily flags) $(B_1,\ldots, B_s)$, where $B_i$ is a basis of $M_i$. For example point $(1,1,1)$ in Figure \ref{fig:flagMatroidPolytope} corresponds to a basis of a polymatroid, but not to a flag, i.e.\ not to a basis of the flag matroid.
\end{remark}
\subsection{Flag matroids and torus orbits}
Consider the flag variety $Fl(\bk,n)$, as described in Section \ref{sec:flagVar}.
The action of the torus $T=(\K^*)^n$ on $\K^n$ induces an action of $T$ on $Fl(\bk,n)$.
A point $p \in Fl(\bk,n)$ gives rise to a representable flag matroid $M$ on $[n]$, as in Definition \ref{def:repFlagMatroid}. All points in the orbit $Tp$ give rise to the same flag matroid. This last statement follows easily from the analogous fact for matroids and the fact that a flag matroid is determined by its constituent matroids.
The analogue of Theorem \ref{thm:PM} holds: 
\begin{theorem}
	 The lattice polytope representing the toric variety $\overline{Tp}$ is isomorphic to the flag matroid polytope of $M$.
\end{theorem}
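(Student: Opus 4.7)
The plan is to mimic the proof of Theorem~\ref{thm:PM}, replacing the single Plücker embedding by the (Segre--Veronese) embedding of the flag variety described in Section~\ref{sec:flagVar}, and then to identify the resulting polytope with the flag matroid polytope via the Minkowski sum description from the corollary of Borovik--Gelfand--White cited just above the statement.

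Concretely, fix a point $p = (V_1 \subseteq \cdots \subseteq V_s) \in Fl(\bk,n)$. I would first embed
$$Fl(\bk,n) \hookrightarrow G(k_1,n) \times \cdots \times G(k_s,n) \hookrightarrow \PP\bigl(\textstyle\bigwedge^{k_1}\K^n \otimes \cdots \otimes \bigwedge^{k_s}\K^n\bigr)$$
via the product of the Plücker embeddings followed by the Segre map, and then parametrize the torus orbit $Tp$ as in Theorem~\ref{thm:PM}. Choosing matrices $A_i$ whose rows span $V_i$, the homogeneous coordinate of $\phi(t_1,\dots,t_n)$ indexed by a tuple $(B_1,\dots,B_s)$ of $k_i$-element subsets $B_i\subseteq[n]$ is
$$\Bigl(\prod_{i=1}^{s}\prod_{j\in B_i} t_j\Bigr)\cdot\prod_{i=1}^s \det\bigl((A_i)_{B_i}\bigr),$$
which is nonzero precisely when each $B_i$ is a basis of the constituent matroid $M_{V_i}$. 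Up to the diagonal torus automorphism that rescales each nonzero coordinate to $1$, this is a pure monomial parametrization whose exponent vectors are exactly $\be_{B_1}+\cdots+\be_{B_s}$ as $B_i$ runs over the bases of $M_{V_i}$.

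By the standard dictionary between projective toric varieties and their weight polytopes (\cite{Cox,sturmfels1996grobner}), the lattice polytope of $\overline{Tp}$ is therefore the convex hull of these exponent vectors, i.e.
$$P\bigl(\overline{Tp}\bigr) \;=\; \conv\bigl\{\be_{B_1}+\cdots+\be_{B_s} : B_i \text{ basis of } M_{V_i}\bigr\} \;=\; P(M_{V_1})+\cdots+P(M_{V_s}).$$
The last equality is immediate from the definition of Minkowski sum on vertex sets. Finally, by the theorem quoted just before the statement (\cite[Corollary 1.13.5]{borovik}), this Minkowski sum is precisely the flag matroid polytope of the flag matroid $M=\mathcal{F}(V_1\subseteq\cdots\subseteq V_s)$, which finishes the proof.

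The only genuinely delicate point is the second step: the Segre--Veronese embedding of the flag variety in general lands in a \emph{proper} subspace of the tensor product (the highest-weight summand $\SS^\lambda V$, by Proposition~\ref{prop:BW}), so one must check that restricting coordinates does not collapse the weight polytope. This is not a problem, because the weights that become identified all lie in the same $T$-weight space and each such weight already appears among the coordinates $\be_{B_1}+\cdots+\be_{B_s}$; projecting onto any $T$-equivariant complement of the other irreducible summands preserves the $T$-weights of the nonzero coordinates, and hence the Newton polytope of the parametrization is unchanged. Alternatively, one can bypass the highest-weight subtlety entirely by working with the ambient Segre embedding of $G(k_1,n)\times\cdots\times G(k_s,n)$, in which $\overline{Tp}$ is the same toric variety but the computation of weights is transparent.
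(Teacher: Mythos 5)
Your proof is correct and follows essentially the same route as the paper: parametrize $\overline{Tp}$ via the (Segre--)Pl\"ucker embedding, read off that the exponent vector of the nonvanishing monomial indexed by a tuple $(B_1,\dots,B_s)$ of constituent bases is $\be_{B_1}+\cdots+\be_{B_s}$, conclude that the weight polytope is $P(M_{V_1})+\cdots+P(M_{V_s})$, and invoke \cite[Corollary 1.13.5]{borovik} to identify this Minkowski sum with the flag matroid polytope. The paper phrases the parametrization as a map into the product $\PP(\bigwedge^{k_1}\K^n)\times\cdots\times\PP(\bigwedge^{k_s}\K^n)$ rather than its Segre image, which sidesteps the highest-weight discussion you include; but as you yourself note at the end, these viewpoints are interchangeable, and your discussion of why passing to the irreducible summand $\SS^\lambda V$ does not change the weight polytope is sound (it follows from $T$-equivariance of the decomposition and the fact that $p$ lies entirely in that summand).
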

\begin{proof}
	The proof is a straightforward generalisation of the proof of Theorem \ref{thm:PM}, with
	the parameterisation of $\overline{Tp}$ given by:
	\[\phi:T\rightarrow \PP(\bigwedge^{k_1} \K^n) \times \cdots \times \PP(\bigwedge^{k_s} \K^n).\]
\end{proof}
\section{Representable polymatroids} \label{sec:repPoly}
Representable polymatroids generalise representable matroids, replacing vectors with subspaces.  While they do not appear as frequently in pure mathematics as matroids, one of the possible references for an interested reader is \cite[Section 3]{branden2011obstructions}.

We start by giving a precise definition.
\begin{definition}
	Let $V$ be a vector space and denote by $S(V)$ the set of all subspaces of $V$. Suppose we have a map $\phi:E \to S(V)$ and assume, without loss of generality, that $\sum_{e \in E} \phi(e) = V$. The \emph{representable polymatroid} $\pM(\phi)$ is defined by the rank function: for $A \subseteq E$, define $r(A)$ as the dimension of  $\phi(A):= \sum_{a \in A} \phi(a)$.

\end{definition}
\begin{example} \label{eg:reppolymatroid}
	Consider the map $\phi:[3] \to S(\CC^3)$ defined by $\phi(1)=\langle \be_1,\be_2 \rangle$ and $\phi(2)=\phi(3)=\langle \be_1,\be_3 \rangle$. Then $\pM(\phi)$ is a rank $3$ polymatroid on $[3]$ with 5 bases: $(2,1,0),(2,0,1),(1,2,0),(1,0,2),(1,1,1)$. Its base polytope is on Figure \ref{fig:flagMatroidPolytope}.
\end{example}
Now we want to define the polymatroid analogue of a matroid associated to a subspace. For this, we fix a number $r \in \ZZ_{>0}$, and we consider the vector space $\K^{rn}$, with a fixed basis indexed by $[r]\times [n]$. If we consider a full rank $k \times rn$ matrix $A$, it represents a $k$-dimensional subspace of $\K^{rn}$. Moreover, the $r$ columns indexed by $(i,j)$ for some fixed $j$ span a subspace $W_j$ of $\K^k$. Then the map $[n]\to S(\K^k):j \to W_j$ defines a representable polymatroid. Note that this is an $r$-polymatroid, and that every representable $r$-polymatroid can be obtained in this way.

As before, applying elementary row operations to $A$ does not change the polymatroid. Hence, we have described how to associate an $r$-polymatroid $\pM(W)$ to a subspace $W$ of $\K^{rn}$. However, we cannot arbitrarily rescale the columns of $A$: if we want to keep our subspaces $W_j$, the factor with which we rescale the column $(i,j)$ should not depend on $i$. This suggests that the correct torus action to consider is the action of $T=(\K^*)^n$ on $\K^{rn}$, where $\bt=(t_1,\ldots,t_n)$ acts by multiplying the $(i,j)$-th coordinate by $t_j$.
\begin{remark}
If we defined Grassmannians via quotients instead of subspaces, we could present this story more invariantly, as follows:\\
Let $V$ be a vector space of dimension $rn$, with a fixed collection of $n$ independent $r$-dimensional subspaces $V_1,\ldots V_n$. Then any $k$-dimensional quotient $\phi: V \to W$ gives rise to an $r$-polymatroid via the map
$[n] \to S(W): i \mapsto \phi(V_i)$.
\end{remark}

If we consider the action of $T=(\K^*)^n$ on $G(k,\K^{rn})$ induced by the action on $\K^{rn}$, then every point in the orbit $Tp$ gives the same polymatroid. This is very similar to the situation for representable matroids, so it should be no surprise that we also have an analogue of Theorem \ref{thm:PM} in this setting:
\begin{theorem}
	The lattice polytope representing the projective toric variety $\overline{Tp}$  described above is isomorphic to the base polytope of the polymatroid $\pM$.
\end{theorem}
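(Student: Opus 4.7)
The plan is to follow the template of Theorem \ref{thm:PM}. Choose coordinates on $\bigwedge^k \K^{rn}$ indexed by $k$-subsets $I \subset [r]\times[n]$, and parametrise the orbit by
\[
\phi\colon T \to \PP(\textstyle\bigwedge^k \K^{rn}),\qquad \phi(\bt)_I \;=\; \Bigl(\prod_{(i,j)\in I} t_j\Bigr)\cdot \det A_I,
\]
where $A_I$ is the $k\times k$ minor of the representing matrix $A$ on columns $I$. Since $\det A_I \neq 0$ precisely when $I$ is a basis of the ordinary matroid $M_A$ of the columns of $A$, after rescaling the nonzero minors by a diagonal automorphism, $\phi$ is a monomial map whose exponents are exactly the vectors $(|I_1|,\ldots,|I_n|)\in \ZZ^n$, with $I_j:=\{i:(i,j)\in I\}$, ranging over bases $I$ of $M_A$. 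The polytope representing $\overline{Tp}$ is therefore $\conv\{(|I_1|,\ldots,|I_n|):I\in\mathcal{B}(M_A)\}$, and it remains to identify this with $P(\pM)$.

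For the inclusion in $P(\pM)$: if $I\in\mathcal{B}(M_A)$, then for every $S\subseteq[n]$ the $\sum_{j\in S}|I_j|$ selected columns lie in $W_S:=\sum_{j\in S}W_j$ and are linearly independent, so $\sum_{j\in S}|I_j|\le \dim W_S = r_{\pM}(S)$; combined with $\sum_j|I_j|=k=r_{\pM}([n])$, this shows $(|I_1|,\ldots,|I_n|)$ is a basis of $\pM$, hence a lattice point of $P(\pM)$.

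For the reverse inclusion it suffices to realise every vertex of $P(\pM)$ as some $(|I_1|,\ldots,|I_n|)$. By the proposition describing vertices of polymatroid base polytopes, each vertex $\bv$ comes from an ordering $\pi$ of $[n]$ with $v_{\pi(i)} = r_{\pM}(S_i)-r_{\pM}(S_{i-1}) = \dim(W_{S_i}/W_{S_{i-1}})$, where $S_i=\{\pi(1),\dots,\pi(i)\}$. For each $i$ pick $v_{\pi(i)}$ vectors inside $W_{\pi(i)}$ whose images form a basis of the quotient $W_{S_i}/W_{S_{i-1}}$; an inductive projection argument (first project a putative linear dependence to $W/W_{S_{n-1}}$, then to $W_{S_{n-1}}/W_{S_{n-2}}$, etc.) forces all coefficients to vanish, so the resulting $k$ columns are linearly independent and give a basis $I$ of $M_A$ with column profile $\bv$.

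The main obstacle is the reverse inclusion, since producing bases of $M_A$ with prescribed column profile is not automatic. The greedy/filtration argument above resolves it, and the remainder of the proof is the by-now standard dictionary between the weight polytope of a torus orbit in a projective space and the monomial exponents of its parametrisation.
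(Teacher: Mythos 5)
Your proof is correct and follows the paper's intended route: it is exactly the generalisation of the proof of Theorem~\ref{thm:PM} that the paper declares to be ``straightforward,'' namely parametrising $\overline{Tp}$ monomially via Pl\"ucker coordinates and reading off the Newton polytope from the exponent vectors. You rightly identify the one step that is \emph{not} automatic in the polymatroid case --- that the convex hull of the column profiles $(|I_1|,\ldots,|I_n|)$ of bases $I$ of $M_A$ coincides with $P(\pM)$ --- and you supply the missing argument: the forward inclusion by elementary linear algebra (a basis of $M_A$ restricted to columns over $S$ is an independent set inside $W_S$), and the reverse inclusion by realising each vertex of $P(\pM)$ through the greedy filtration $W_{S_1}\subset\cdots\subset W_{S_n}$ coming from the standard vertex description of polymatroid base polytopes. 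In the matroid case this identification is a tautology (bases of $M$ \emph{are} the vertices of $P(M)$), which is presumably why the paper leaves it implicit; your write-up is the more complete one, and your diagnosis that the reverse inclusion is the real content is correct.
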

\begin{proof}
	It is a straightforward generalisation of the proof of Theorem \ref{thm:PM}.
\end{proof}

\subsection{Comparison between polymatroids and flag matroids}
A flag matroid polytope is a special case of a polymatroid polytope, by Theorem  \ref{thm:flagMatroidBasePolytope}. It is tempting to think about flag matroids as a special case of polymatroids, but we need to be careful when doing this: the notion of a basis of a flag matroid is not compatible with the notion of a basis for a polymatroid. More precisely, the bases of a polymatroid are all lattice points of its base polytope, while the bases of a flag matroid are only the vertices of the associated polytope - cf.~Remark \ref{rmk:interiorFlag} and Figure \ref{fig:flagMatroidPolytope}.

For a flag matroid $\mathcal{F}$, let $\pM(\mathcal{F})$ be the associated polymatroid.



The definitions of representable flag matroid and representable polymatroid look unrelated at first sight, but there is a connection.
\begin{prop} \label{prop:repflagpoly}
	If $\mathcal{F}$ is a representable flag matroid, then $\pM(\mathcal{F})$ is a representable polymatroid.
\end{prop}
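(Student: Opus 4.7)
The plan is to construct an explicit representing map $\phi$ for the polymatroid $\pM(\mathcal{F})$ directly from the flag representation of $\mathcal{F}$, and then verify agreement via rank functions.

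First, I would identify the rank function of $\pM(\mathcal{F})$. Since the base polytope $P(\pM(\mathcal{F}))$ equals the Minkowski sum $P(M_{V_1})+\cdots+P(M_{V_s})$ of the constituent matroid polytopes (a theorem stated earlier in the paper), and any polymatroid rank function can be recovered from its base polytope via $r(U)=\max\{x\cdot \be_U : x \in P\}$, the observation that $\max_{x\in P_1+\cdots+P_s} x\cdot \be_U = \sum_i \max_{x_i \in P_i} x_i\cdot \be_U$ yields
\[
r_{\pM(\mathcal{F})}(U) = \sum_{i=1}^s r_{M_{V_i}}(U).
\]

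Next I would build $\phi$ by a block-diagonal construction. Choose a $k_s \times n$ matrix $A$ whose first $k_i$ rows span $V_i$ for every $i$, so that each constituent $M_{V_i}$ is represented by the vector arrangement $\phi_i : [n] \to \K^{k_i}$ sending $j$ to the $j$-th column of the submatrix $A^{(i)}$ formed by the top $k_i$ rows. Set $W := \K^{k_1} \oplus \cdots \oplus \K^{k_s}$ and define $\phi : [n] \to S(W)$ by
\[
\phi(j) := \big\langle (\phi_1(j),0,\ldots,0),\ (0,\phi_2(j),0,\ldots,0),\ \ldots,\ (0,\ldots,0,\phi_s(j)) \big\rangle,
\]
the span of the $s$ vectors obtained by placing $\phi_i(j)$ in the $i$-th summand and zero elsewhere.

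The verification is then immediate: for any $U \subseteq [n]$, the block-diagonal structure forces
\[
\sum_{j \in U} \phi(j) = \bigoplus_{i=1}^{s} \langle \phi_i(j) : j \in U \rangle,
\]
whose dimension equals $\sum_{i} r_{M_{V_i}}(U)$. Combined with the first step, this shows $\pM(\phi)$ has the same rank function as $\pM(\mathcal{F})$, so the two polymatroids coincide. The only real obstacle is the first step -- the Minkowski-sum-to-rank-sum correspondence -- but this is a standard fact in polymatroid theory. Once it is in hand, the block-diagonal choice of representing subspaces is essentially forced by the need to keep the different $\phi_i$ from interacting.
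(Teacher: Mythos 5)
Your proof is correct, and your core construction is the same one the paper uses: you build the representing subspaces block-diagonally inside $\K^{k_1}\oplus\cdots\oplus\K^{k_s}$, which is precisely the subspace-side picture of the paper's quotient $\K^{rn}\to V_1\oplus\cdots\oplus V_r$. Where you genuinely diverge is in the verification that this construction recovers $\pM(\mathcal{F})$. The paper argues at the level of base polytopes: it identifies the vertices of $P(\mathcal{F})$ with flags of constituent bases, identifies the lattice points of $P(\pM)$ with arbitrary tuples of constituent bases, and then invokes Remark \ref{rmk:interiorFlag} to conclude the two polytopes have the same lattice points and hence coincide. You instead compare rank functions directly: you use the support-function identity $r(U)=\max_{x\in P}x\cdot\be_U$ together with additivity of support functions under Minkowski sums to get $r_{\pM(\mathcal{F})}(U)=\sum_i r_{M_{V_i}}(U)$, and then observe that the block-diagonal $\phi$ forces $\dim\sum_{j\in U}\phi(j)=\sum_i r_{M_{V_i}}(U)$ by a one-line linear-algebra computation. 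Your route is a bit more self-contained — it sidesteps the lattice-point bookkeeping and Remark \ref{rmk:interiorFlag} — at the mild cost of invoking the (standard, but not stated in the paper) fact that a polymatroid's rank function is the support function of its base polytope on indicator vectors; that fact follows quickly from the Edmonds vertex description cited earlier, so there is no real gap.
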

\begin{proof}
For this proof we will use the more intrinsic definition of representable flag matroids and polymatroids using quotients. Let $\mathcal{F}$ be the flag matroid represented by the flag of quotients $\K^n \to V_1 \to \cdots \to V_r$. Then the quotient $\K^{kn} \to V_1 \bigoplus \cdots \bigoplus V_r$ represents a polymatroid $\pM$ on $[n]$. We need to argue that $\mathcal{F}$ and $\pM$ have the same base polytope.

Bases of $\mathcal{F}$ correspond to flags $[n] \supsetneq F^1 \supseteq \ldots \supseteq F^r$ such that $F^i$ gives a basis of $V_i$. On the other hand, choosing a basis of $\pM$ corresponds to choosing for every $i$ a subset $F^i \subsetneq [n]$ such that $F^i$ gives a basis of $V_i$. From this it follows that every vertex of $P(\mathcal{F})$ is a lattice point of $P(\pM)$, and (using Remark \ref{rmk:interiorFlag}) that every lattice point of $P(\pM)$ is a lattice point of $P(\mathcal{F})$. So $P(\pM)=P(\mathcal{F})$ as desired.
\end{proof}
\begin{remark}
Thinking again about flag varieties in terms of subspaces, we have that if $\mathcal{F}$ is represented by a flag $V_1 \subseteq \ldots \subseteq V_r \subsetneq \K^n$, then  $\pM(\mathcal{F})$ is represented by a subspace $V_1 \oplus \cdots \oplus V_r \subsetneq \K^{rn}$.
Geometrically, this construction corresponds to an algebraic map
\[
Fl(k_1,\ldots, k_r; n) \to G(\sum_{i}{k_i},rn) \text{.}
\]	
\end{remark}
\begin{remark}
The converse of Proposition \ref{prop:repflagpoly} is not true: given a representable polymatroid that is also a flag matroid, it is not always a representable flag matroid. One way to construct a counterexample is as follows. If $M_1$, $M_2$ are representable matroids with corresponding base polytopes $P_1$, $P_2$, then $P_1+P_2$ corresponds to a representable polymatroid. However, Example \ref{eg:nonrepflag} gives an example of two such (concordant) matroids such that $P_1+P_2$ corresponds to a flag matroid that is not representable.
\end{remark}

\section{Equivariant $K$-theory}
We have presented a correspondence between representable matroids and torus orbits in Grassmannians, and generalisations of this correspondence to representable flag matroids and representable polymatroids. We would like to drop the word ``representable" from all of those. As we will see, in order to do this, we need to replace ``torus orbits" with ``classes in equivariant $K$-theory". This was done for matroids by Fink and Speyer \cite{FinkSpeyer}. In this section, we review their construction, and consider generalisations to flag matroids. 
 We start with an introduction to non-equivariant and equivariant $K$-theory.
\subsection{A very brief introduction to $K$-theory}
This section is based on \cite[Section 15.1]{FultonIntersection}.

%

Let $X$ be an algebraic variety. We define $K^0(X)$ to be the free abelian group generated by vector bundles on $X$, subject to relations $[E]=[E']+[E'']$ whenever $E'$ is a subbundle of $E$, with quotient bundle $E''=E/E'$. The group $K^0(X)$ inherits a ring structure from the tensor product: $[E]\cdot [F]=[E \otimes F]$.

Similarily, we can define $K_0(X)$ to be the free abelian group generated by isomorphism classes of coherent sheaves on $X$, subject to relations $[A]+[C]=[B]$ whenever there is a short exact sequence $0 \to A \to B \to C \to 0$. There is an inclusion $K^0(X) \hookrightarrow K_0(X)$. From now on, we will always assume that $X$ is a smooth variety. In this case, the inclusion is an isomorphism, allowing us to identify $K^0(X)$ and $K_0(X)$.

Let $f: X \to Y$ be a map of (smooth) varieties. Then there is a pullback map $f^*: K^0(Y) \to K^0(X)$ defined by $f^*[E]=[f^*E]$ (where E is a vector bundle on $Y$). If $f$ is a proper map, there is also a pushforward map $f_*: K_0(X) \to K_0(Y)$ given by $f_*[A] = \sum{(-1)^i[R^if_*A]}$. Here $R^if_*$ are right derived functors of the push-forward. An interested reader is advised to find the details in \cite[Section 15]{FultonIntersection}. In this paper, we will not be using the formal definitions of $K^0(X)$, $f^*$ or $f_*$. Instead, we will refer to explicit descriptions of those in the cases that we need, each time providing a theorem we build upon.


\begin{remark}
In all the cases we study the ring $K^0(X)$ is isomorphic to the cohomology ring and to the Chow ring (after tensoring with $\QQ$). Note however that the map from $K^0(X)$ to the Chow ring is nontrivial and given by the Chern character.
\end{remark}
\begin{example}
Consider the projective space $\PP^n$.  The (rational) Chow ring is $A(\PP^n)=\QQ[H]/(H^{n+1})$. Here one should think about $H$ as a hyperplane in $\PP^n$ and $H^k$ as codimension $k$ projective subspace. The most important line bundle is $\mathcal{O}(1)$. The Chern character $ch:K^0(\PP^n) \to A(\PP^n)$ sends $[\mathcal{O}(1)]$ to $\sum_{i=0}^n H^i/i!$. Note that $K^0(\PP^n)$ can be written as $\ZZ[\alpha]/(\alpha^{n+1})$, where $\alpha = 1-[\mathcal{O}(-1)]$ is the class of the structure sheaf of a hyperplane. As a special case, the $K$-theory of a point is $\ZZ$.
\end{example}




If $X$ is a smooth variety equipped with an action of a torus $T$, we can define its \emph{equivariant $K$-theory} $K_T^0(X)\cong K^T_0(X)$. The construction and properties are exactly the same as in the previous section, if we replace ``vector bundles" and ``coherent sheaves" by ``T-equivariant vector bundles" and ``T-equivariant coherent sheaves".

For later reference, we describe the equivariant $K$-theory of a point: $K_T^0(pt)=\ZZ[\Char(T)]$, where $\Char(T)=\Hom(T,\K^*)$ is the lattice of characters of $T$. Here $\ZZ[\Char(T)]$ is a ring, which as a module over $\ZZ$ has basis given by $\Char(T)$, and multiplication is induced from addition in $\Char(T)$. It is the ring of Laurent polynomials in $\dim T$ variables.
Explicitly, a $T$-equivariant sheaf on $pt$ is just a vector space $W$ with a $T$-action. We may decompose $W=\oplus_{{\bf c}\in \Char(T)} W_{\bf c}$ as in Section \ref{sub:repsandchar}. The corresponding element of $\ZZ[\Char(T)]$ is the character (also called \emph{Hilbert series}) $\Hilb(W) := \sum_{{\bf c}\in \Char(T)} (\dim W_{\bf c}){\bf c}$. We point out that even for infinite-dimensional $T$-modules, $\Hilb(W)$ makes sense as a formal power series, as long as $W_{\bf c}$ is finite-dimensional for all $\bf c$.

We finish this section by describing the relation between ordinary and $T$-equivariant $K$-theory:
\begin{theorem}[{\kern-0.5em \cite[Theorem 4.3]{merkurev}}] \label{thm:smallTorus}
	Let $X$ be a smooth projective variety with an action of a torus $T$. Let $S \subseteq T$ be a subtorus.
	Then the natural map
	\[
	K^0_T(X) \otimes_{\ZZ[\Char(T)]} \ZZ[\Char(S)] \to K^0_S(X)
	\]
	is an isomorphism. In particular, taking $S$ to be the trivial group, the natural map
	\[
	K^0_T(X) \otimes_{\ZZ[\Char(T)]} \ZZ \to K^0(X)
	\]
	is an isomorphism.
\end{theorem}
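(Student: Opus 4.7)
The plan is to reduce the claim to the trivial case $X = \mathrm{pt}$ via a cellular decomposition of $X$. For a point, $K^0_T(\mathrm{pt}) = \ZZ[\Char(T)]$ and $K^0_S(\mathrm{pt}) = \ZZ[\Char(S)]$ by the preceding description, and the natural map is tautologically the identity on $\ZZ[\Char(S)]$. By equivariant homotopy invariance, this immediately extends to $X = V$ for any $T$-representation $V$, since pullback along $V \to \mathrm{pt}$ induces an isomorphism on equivariant $K$-theory.

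To handle general smooth projective $X$, I would invoke the Bia{\l}ynicki--Birula decomposition. Choose a generic one-parameter subgroup $\lambda: \K^* \hookrightarrow T$ whose fixed locus equals $X^T$, and for each connected component $Z$ of $X^T$ form the attracting cell
\[
X_Z = \{x \in X : \lim_{t \to 0} \lambda(t) \cdot x \in Z\},
\]
which is a $T$-equivariant affine bundle over $Z$. Arranging the $X_Z$ compatibly with their closures yields a filtration by closed $T$-invariant subvarieties $\emptyset = X_{-1} \subset X_0 \subset \cdots \subset X_N = X$ with each $X_i \setminus X_{i-1}$ a single cell. I would then induct on $i$ using the Quillen localization long exact sequence in equivariant $K$-theory for $(X_{i-1} \hookrightarrow X_i)$, combined with homotopy invariance identifying $K^0_T(X_Z) \cong K^0_T(Z)$. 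The goal of the induction is to establish that $K^0_T(X)$ is free over $\ZZ[\Char(T)]$ with a basis indexed by the cells $X_Z$; running the parallel argument for the $S$-action (with $\lambda$ chosen to also factor through $S$) gives the analogous freeness of $K^0_S(X)$ over $\ZZ[\Char(S)]$, after which the natural map matches bases to bases and is visibly an isomorphism.

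The main obstacle will be controlling the connecting maps $K^T_1(X_i \setminus X_{i-1}) \to K^0_T(X_{i-1})$ in the localization sequence: one must show they vanish, so that the long exact sequence splits into short exact sequences of free $\ZZ[\Char(T)]$-modules that remain exact after tensoring with $\ZZ[\Char(S)]$. When the $T$-fixed points are isolated, each cell is a $T$-representation and the argument reduces to the computation of $K^T_1$ for a point. For positive-dimensional components of $X^T$, one inducts further on $\dim Z$, using that each such $Z$ is itself a smooth projective $T$-variety of strictly smaller dimension. A secondary subtlety -- that the $T$-stratification may be too coarse to separate $S$-fixed points -- is harmless: it only means the basis of $K^0_S(X)$ obtained this way is not the finest possible, but freeness, and hence the isomorphism statement, is unaffected.
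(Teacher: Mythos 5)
The paper does not prove this theorem: it is quoted directly from Merkurjev, so there is no in-text argument to compare against. Your Bia{\l}ynicki--Birula strategy is a standard and plausible route, but two points need repair. First, the reduction step for a subtorus should \emph{not} use a second 1-parameter subgroup factoring through $S$: you want to run the argument once, with a single $\lambda$ generic in $T$, and observe that the resulting BB cells and their closures are $T$-stable, hence automatically $S$-stable; then one and the same filtration computes both $K^0_T$ and $K^0_S$. If instead you use a $\lambda$ generic in $S$, the fixed locus is $X^S\supsetneq X^T$, the stratification is coarser, and the claim that ``the map matches bases to bases'' does not make sense as the two bases are indexed by different sets. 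Second, the assertion that $K^0_T(X)$ is free over $\ZZ[\Char(T)]$ ``with a basis indexed by the cells'' is too strong when fixed components are positive-dimensional: a cell over a component $Z$ contributes $K^0(Z)\otimes_\ZZ\ZZ[\Char(T)]$, which is free over $\ZZ[\Char(T)]$ only if $K^0(Z)$ is free over $\ZZ$, and this can fail for a general smooth projective $X$ (e.g.\ fixed components with torsion in $\mathrm{Pic}$).

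More importantly, you have identified ``controlling the connecting maps'' as the main obstacle and then left it open; this is exactly the technical heart of the theorem, not an afterthought. Two ways to close the gap. (a) Prove the boundary vanishing directly via the concentration/localization theorem of Thomason (see also Vezzosi--Vistoli, which the paper already cites for Theorem~\ref{thm:equilocim}): the restriction $G^T_*(X_i)\to G^T_*(X_i^T)$ becomes an isomorphism after inverting finitely many elements of $\ZZ[\Char(T)]$, and the filtration is then split by a standard triangularity argument. (b) Avoid boundary vanishing altogether: strengthen the statement you induct on to cover all $K$-groups $G^T_n$, and carry along the additional claim that $\mathrm{Tor}_{>0}^{\ZZ[\Char(T)]}(G^T_n(-),\ZZ[\Char(S)])=0$. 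For a cell over $Z$, $G^T_n=K_n(Z)\otimes_\ZZ\ZZ[\Char(T)]$ and the Tor vanishing holds because $\ZZ[\Char(S)]$ is free over $\ZZ$; this propagates through the Quillen localization sequence and renders tensoring with $\ZZ[\Char(S)]$ exact, so a five-lemma argument finishes. Route (b) sidesteps the freeness issue above as well, and is closer in spirit to how these comparison isomorphisms are usually established; but as written, your sketch neither proves boundary vanishing nor sets up the Tor bookkeeping, so there is a genuine gap.
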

\subsection{Explicit construction via equivariant localisation}
Let $X$ be a smooth projective variety over $\K$, and $T$ a torus acting on it. If $X$ has only finitely many torus-fixed points, we can use the method of \emph{equivariant localisation} to give an explicit combinatorial description of classes in $K^0_T(X)$. Our exposition here is largely based on the one in \cite{FinkSpeyer}.
The following theorem is central to our discussion.
\begin{theorem}[{\kern-0.5em {\cite[Theorem 3.2]{nielsen}, see also \cite[Theorem 2.5]{FinkSpeyer} and the references therein}}] \label{thm:equiloc}
	If $X$ is a smooth projective variety with a torus action, then the restriction map $K^0_T(X) \to K^0_T(X^T)$ is an injection.
\end{theorem}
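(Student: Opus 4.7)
The plan is to prove this via the classical equivariant localization philosophy of Atiyah--Bott--Thomason, reducing the statement to two facts: (i) the restriction map becomes an isomorphism after localizing $R(T) = \ZZ[\Char(T)]$ at a suitable multiplicative set, and (ii) $K^0_T(X)$ is a torsion-free $R(T)$-module, so the kernel cannot survive.

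For (i), I would invoke the classical localization theorem: letting $S \subset R(T)$ be the multiplicative subset generated by $\{1 - e^\chi : \chi \in \Char(T), \chi \neq 0\}$, the inclusion $i: X^T \hookrightarrow X$ induces maps $i_*$ and $i^*$ that become mutually inverse (up to the equivariant Euler class of the normal bundle, which is invertible in $S^{-1}R(T)$) after localization. Consequently $S^{-1}K^0_T(X) \to S^{-1}K^0_T(X^T)$ is an isomorphism, and the kernel of the unlocalized restriction $K^0_T(X) \to K^0_T(X^T)$ consists of $S$-torsion classes.

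For (ii), I would use a Bia{\l}ynicki-Birula decomposition: choose a generic one-parameter subgroup $\lambda: \K^* \to T$ so that $X^\lambda = X^T$, and form the BB cells $X_i^+ = \{x \in X : \lim_{t \to 0} \lambda(t)\cdot x \in F_i\}$, where $F_i$ are the connected components of $X^T$. Each $X_i^+$ is a $T$-equivariant affine bundle over $F_i$, and the $X_i^+$ assemble into a filtration of $X$ by closed unions. Working up this filtration with the long exact sequence in equivariant $K$-theory, and using homotopy invariance $K^0_T(X_i^+) \cong K^0_T(F_i)$, one expresses $K^0_T(X)$ as an iterated extension of the $R(T)$-modules $K^0_T(F_i)$. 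Since each $F_i$ is a smooth projective variety on which $T$ acts trivially, $K^0_T(F_i) \cong K^0(F_i) \otimes_\ZZ R(T)$ is free over $R(T)$. Therefore $K^0_T(X)$ is free over $R(T)$, hence torsion-free, and combined with (i) this gives injectivity.

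The main obstacle is step (ii): making the filtration argument in equivariant $K$-theory rigorous, in particular verifying that the boundary maps in the localization sequence split so that $K^0_T(X)$ really is free rather than merely an extension with no $S$-torsion. For the varieties that matter in this paper---Grassmannians and flag varieties $G/P$---this is cleaner because the BB decomposition coincides with the Schubert cell decomposition, and the Schubert classes give an explicit $R(T)$-basis of $K^0_T(G/P)$, so freeness can be read off directly. The general smooth projective case can then be cited from Thomason's work, or from the Nielsen and Fink--Speyer references given.
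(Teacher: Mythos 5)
The paper does not prove this result---it is cited from Nielsen \cite{nielsen} and Fink--Speyer \cite{FinkSpeyer} with no argument given. So there is no ``paper's own proof'' to compare against; I will evaluate your sketch on its own merits.

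Your two-step strategy (localize at $S = \{1-e^\chi\}$ to show the kernel is $S$-torsion, then show $K^0_T(X)$ is $S$-torsion-free via a Bia{\l}ynicki--Birula filtration) is indeed the classical Thomason-style route and is essentially how the cited references proceed. The logic is sound: $R(T)=\ZZ[\Char(T)]$ is a Laurent polynomial ring, hence a domain, and $S$ consists of nonzero elements, so a free $R(T)$-module has no $S$-torsion.

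The one genuine gap is exactly the one you flagged, and it deserves a sharper statement. The localization sequence for the closed/open pair $(Z_{i-1}, X_i^+)$ inside $Z_i$ reads
\[
\cdots \to G^T_1(X_i^+) \xrightarrow{\partial} G^T_0(Z_{i-1}) \xrightarrow{j_*} G^T_0(Z_i) \xrightarrow{u^*} G^T_0(X_i^+) \to 0 \text{.}
\]
Surjectivity on the right is automatic and $G^T_0(X_i^+)\cong R(T)$ is projective, so the sequence splits \emph{provided} $j_*$ is injective, i.e.\ provided the connecting map $\partial$ vanishes. This is not free: $G^T_1(X_i^+)\cong K_1(\ZZ)\otimes R(T)$ is nonzero (there is $2$-torsion from $K_1(\ZZ)=\ZZ/2$), and one must argue that $\partial=0$ rather than merely observe that the cell has trivial higher rational $K$-theory. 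In the fixed-point-isolated setting this is usually handled either by an upper-triangularity argument (the restriction of the class of each cell closure to the fixed points is supported on ``larger'' fixed points, forcing linear independence directly), or by passing to $G$-theory with supports and checking compatibility with restriction to $Z_{i-1}^T\subset Z_i^T$, or by invoking equivariant Riemann--Roch to transfer the problem to equivariant Chow groups where the filtration splits for degree reasons. Any of these closes the gap, but your sketch should commit to one. For $G/P$, as you note, the Schubert-basis argument is the cleanest and makes the upper-triangularity explicit; for a general smooth projective $X$ with finitely many fixed points the same upper-triangularity argument works once one fixes a generic one-parameter subgroup, and this is the content of \cite[Theorem 3.2]{nielsen}.
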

From now on we will always assume that $X$ has only finitely many torus-fixed points. In this case $K^0_T(X^T)$ is simply the ring of functions from $X^T$ to $\ZZ[\Char(T)]$. In other words, we can describe a class in $K^0_T(X)$ just by giving a finite collection of Laurent polynomials in $\ZZ[\Char(T)]$.
\begin{remark}
	In the literature, a variety $X$ for which $K^0_T(X)$ is a free $\ZZ[\Char(T)]$-module, and has a $\ZZ[\Char(T)]$-basis that restricts to a $\ZZ$-basis of $K^0(X)$, is called \emph{equivariantly formal}. This notion was first introduced in \cite{GKM}.
	In \cite[Section 2.4]{anderson}, it is noted that smooth projective varieties with finitely many $T$-fixed points are equivariantly formal. 
\end{remark}
We now explicitly describe the class of a $T$-equivariant coherent sheaf on $X$. We will do this under the following additional assumption (which is not essential but makes notation easier and will hold for all varieties of interest):
\begin{definition} \label{def:contracting}
	A finite-dimensional representation of $T$ is called \emph{contracting} if all characters lie in an open halfspace; or equivalently, if the characters generate a pointed cone (see Section \ref{subsec:cones}).
	The action of $T$ on a variety $X$ is \emph{contracting}, if for every torus-fixed point $x \in X$, there exists an open neighbourhood $U_x$ isomorphic to $\AA^N$ such that the action of $T$ on $U_x$ is a contracting representation.
\end{definition}
Let $E$ be a $T$-equivariant coherent sheaf on $X$. We will construct a map $[E]^T: X^T \to \ZZ[\Char(T)]$. For every $x \in X^T$, we have an open neighbourhood $U_x$ as in Definition \ref{def:contracting}.
Let $\chi_1,\ldots,\chi_N$ be the characters by which $T$ acts on $U_x$ (so $\mathcal{O}(U_x)$ is a polynomial ring multigraded by $T$ in the sense of \cite[Definition 8.1]{MillerSturmfels}, with characters $\chi_1^{-1},\ldots,\chi_N^{-1}$).
Our sheaf $E$, restricted to $U_x$, corresponds to a graded, finitely generated $\mathcal{O}(U_x)$-module $E(U_x)$.

Since $E(U_x)$ is a graded module over the polynomial ring $\mathcal{O}(U_x)$, which is multigraded by $T$, it follows from \cite[Theorem 8.20]{MillerSturmfels} that $E(U_x)$ is a $T$-module, and its Hilbert series is of the form
\begin{equation} \label{eqn:KclassE}
\frac{K(E(U_x),\bt)}{\prod_{i=1}^{N}{(1-\chi_i^{-1})}} \text{,}
\end{equation}

for some $K(E(U_x),\bt) \in \ZZ[\Char(T)]$.
\begin{definition}
	For $E$ a $T$-equivariant coherent sheaf on $X$, we define $[E]^T$ to be the map that sends $x \in X^T$ to $K(E(U_x),\bt) \in \ZZ[\Char(T)]$, the numerator in (\ref{eqn:KclassE}).
\end{definition}
\begin{theorem}[{\kern-0.5em \cite[Theorem 2.6]{FinkSpeyer}}]
	The map $[E]^T$ defined above is the image of the class of $E$ under the injection $K^0_T(X) \hookrightarrow K^0_T(X^T)$ of Theorem \ref{thm:equiloc}.
\end{theorem}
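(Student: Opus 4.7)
The plan is to verify the equality of the two $\ZZ[\Char(T)]$-valued functions on $X^T$ pointwise. Fix a torus-fixed point $x\in X^T$ with equivariant affine neighbourhood $U_x\cong \AA^N$ on which $T$ acts by characters $\chi_1,\dots,\chi_N$. The restriction map from $K^0_T(X)$ to $K^0_T(\{x\})=\ZZ[\Char(T)]$ factors as the composition of the pullback along the open inclusion $U_x\hookrightarrow X$ and the pullback along the closed embedding $\{x\}\hookrightarrow U_x$. So it suffices to compute the image of $[E|_{U_x}]$ in $K^0_T(\{x\})$ under the latter and show that it equals $K(E(U_x),\bt)$.

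The first key step is to identify $K^0_T(U_x)$ with $\ZZ[\Char(T)]$. Because the action is contracting, $U_x$ is equivariantly isomorphic to a $T$-representation, and pullback along the inclusion of the fixed point $\{x\}\hookrightarrow U_x$ is an isomorphism on equivariant $K$-theory. Under this identification, the trivial equivariant line bundle $\mathcal{O}_{U_x}$ corresponds to $1$, and the twist by a character $\mu\in\Char(T)$ corresponds to multiplication by the monomial $\chi^{\mu}$.

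Next I would resolve $E(U_x)$. By Hilbert's syzygy theorem applied to the multigraded polynomial ring $R=\mathcal{O}(U_x)$, there is a finite graded free resolution
\[
0\longrightarrow F_\ell\longrightarrow\cdots\longrightarrow F_1\longrightarrow F_0\longrightarrow E(U_x)\longrightarrow 0,
\]
with $F_i=\bigoplus_j R(-\mu_{ij})$. Using the relation $[E(U_x)]=\sum_i (-1)^i[F_i]$ in $K^0_T(U_x)$ and the identifications above, the class of $E(U_x)$ in $K^0_T(\{x\})$ is $\sum_{i,j}(-1)^i\chi^{\mu_{ij}}$. On the other hand, the Hilbert series of each $F_i$ is $\sum_j\chi^{\mu_{ij}}/\prod_k(1-\chi_k^{-1})$, and additivity of Hilbert series on short exact sequences yields
\[
\Hilb\bigl(E(U_x)\bigr)=\frac{\sum_{i,j}(-1)^i\chi^{\mu_{ij}}}{\prod_{k=1}^N(1-\chi_k^{-1})}.
\]
Comparing with (\ref{eqn:KclassE}), the numerator is precisely $K(E(U_x),\bt)=\sum_{i,j}(-1)^i\chi^{\mu_{ij}}$, which matches the class of $E$ restricted to $x$. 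Varying $x$ over $X^T$ gives the conclusion.

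The main obstacle is to be sure that the numerator $K(E(U_x),\bt)$ is well-defined independently of choices; this is exactly \cite[Theorem 8.20]{MillerSturmfels}, which guarantees both the rational form of the Hilbert series for a finitely generated multigraded module and the additivity of the numerator on short exact sequences. Once this is in hand, the whole argument is a straightforward bookkeeping comparison between the Koszul-style computation of the $K$-theory class at a torus-fixed point and the combinatorial definition of $[E]^T(x)$.
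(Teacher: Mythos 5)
The paper does not prove this theorem itself: it is stated with an attribution to Fink--Speyer and no argument is given, so there is no in-paper proof to compare against. Your argument is correct and is essentially the standard localisation computation one would expect behind the cited statement. The factoring of the restriction map through the contracting chart $U_x$, the use of equivariant homotopy invariance to identify $K^0_T(U_x)$ with $\ZZ[\Char(T)]$, and the comparison of the Euler characteristic of a finite multigraded free resolution against the numerator of the Hilbert series via additivity on short exact sequences all fit together cleanly and match the conventions set up around equation~(\ref{eqn:KclassE}).

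Two small remarks. First, homotopy invariance of equivariant $K$-theory for a linear $T$-action on affine space does not actually require the ``contracting'' hypothesis; that hypothesis is what guarantees the characters generate a pointed cone so that the Hilbert series is a well-defined formal power series with the stated denominator, which is where it is really used. Second, since $E$ is only assumed coherent (not locally free), the pullback appearing in the restriction map is a priori the derived pullback; your replacement of $E(U_x)$ by a finite graded free resolution is exactly the mechanism that makes the two sides agree, and it is worth saying explicitly that the existence of such a finite resolution uses smoothness of $U_x$ (Hilbert's syzygy theorem) and that the resulting alternating sum is independent of the chosen resolution. Neither point is a gap; the argument is sound.
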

\begin{example} \label{eg:O(d)}
	Let $X=\PP^n$, equipped with the natural torus action $\bt\cdot [a_0: \ldots : a_n] = [t_0^{-1}a_0:\ldots :,t_n^{-1}a_n]$. Then $\mathcal{O}(d)$ is a $T$-equivariant sheaf. The torus action on $\PP^n$ has $n+1$ fixed points, namely $p_i=[0:\ldots: 1:\ldots:0]$, where the $1$ is at position $i$. We use equivariant localisation to describe the class $[\mathcal{O}(d)]^T$.\\
	Every $p_i$ has an open neighbourhood $U_i=\Spec A_i$, where $A_i= \K[x_0,\ldots,\hat{x_i},\ldots,x_n]$ is multigraded by $T$ via $\deg(x_j)=t_i^{-1}t_j$. The $A_i$-module $\mathcal{O}(d)(U_i)$ is a copy of $A_i$ generated in degree $t_i^{d}$. So its Hilbert series is $t_i^d/{\prod_j{(1-t_i^{-1}t_j)}}$. Hence $[\mathcal{O}(d)]^T$ can be represented by the map $(\PP^n)^T \to \ZZ[\Char(T)]: p_i \mapsto t_i^d$.
\end{example}
We can describe the image of the map from Theorem \ref{thm:equiloc} explicitely, if we impose an additional condition on $X$.
\begin{theorem}[{\kern-0.5em {\cite[Corollary 5.12]{VezzosiVistoli}, see also \cite[Theorem 2.9]{FinkSpeyer} and the references therein}}] \label{thm:equilocim}
	Suppose $X$ is a projective variety with an action of a torus $T$, such that $X$ has finitely many $T$-fixed points and finitely many 1-dimensional $T$-orbits, each of which has closure isomorphic to $\PP^1$. Then a map $f:X^T \to \ZZ[\Char(T)]$ is in the image of the map $K^0_T(X) \to K^0_T(X^T)$ of Theorem \ref{thm:equiloc} if and only if the following condition holds:

	For every one-dimensional orbit, on which $T$ acts by character $\chi$ and for which $x$ and $y$ are the $T$-fixed points in the orbit closure, we have
	\[
	f(x) \equiv f(y) \mod{1-\chi} \text{.}
	\]
\end{theorem}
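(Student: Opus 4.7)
The plan is to prove the two directions separately, with necessity following from a direct computation on the $\PP^1$-closure of each orbit, and sufficiency requiring a Bia\l{}ynicki-Birula induction.

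For the necessity direction, suppose $f=[E]^T$ for a $T$-equivariant coherent sheaf $E$ on $X$. Fix a one-dimensional orbit with closure $\overline{C}\cong \PP^1$, fixed points $x,y$, and $T$-character $\chi$. By functoriality of the restriction map, it suffices to check the congruence for the class $[E|_{\overline{C}}]^T$ in $K_T^0(\overline{C})$. On $\PP^1$, $K_T^0$ is freely generated as a $\ZZ[\Char(T)]$-module by $[\mathcal{O}]$ and $[\mathcal{O}(1)]$, so the claim reduces to each generator: $[\mathcal{O}]$ is trivial, and for $[\mathcal{O}(d)]$, Example \ref{eg:O(d)} shows the localizations are monomials whose ratio is $\chi^{\pm d}$, so the difference is a unit multiple of $1-\chi^{\pm d}$, which is divisible by $1-\chi$.

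For the sufficiency direction, choose a generic one-parameter subgroup $\lambda:\K^*\to T$ with $X^\lambda = X^T$, and consider the associated Bia\l{}ynicki-Birula decomposition $X = \bigsqcup_{p\in X^T} X_p^+$ into $T$-stable affine cells $X_p^+ = \{z : \lim_{t\to 0}\lambda(t)\cdot z = p\}$. Ordering the fixed points $p_1,\ldots,p_m$ compatibly produces a filtration $\emptyset = Y_0 \subsetneq Y_1 \subsetneq \cdots \subsetneq Y_m = X$ by closed $T$-invariant subvarieties with $Y_i\setminus Y_{i-1} = X_{p_i}^+$. Given $f$ satisfying the congruences, we build a class $\alpha\in K_T^0(X)$ with $\alpha|_{X^T}=f$ by induction on $i$, exploiting the excision sequence $K_T^0(Y_{i-1})\to K_T^0(Y_i)\to K_T^0(X_{p_i}^+)\to 0$: at each step, $K_T^0(X_{p_i}^+)$ is isomorphic to $\ZZ[\Char(T)]$ since $X_{p_i}^+$ is equivariantly contractible to $p_i$, so one can freely prescribe the value at $p_i$, and the congruences match the pushforward corrections needed to keep the existing values on $Y_{i-1}$ consistent.

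The main obstacle is precisely this inductive extension step: one must verify that the congruences at $p_i$ -- one per each earlier $p_j$ connected to $p_i$ by a one-dimensional orbit -- are exactly the conditions that make the extension possible, a fact which relies on the hypothesis that the one-dimensional orbit closures through $p_i$ are in bijection with the $T$-weights on the tangent space at $p_i$. An alternative bypassing Bia\l{}ynicki-Birula is a rank-counting argument using that equivariant formality makes $K_T^0(X)$ free of rank $|X^T|$ over $\ZZ[\Char(T)]$, while the GKM submodule $M\subset \bigoplus_{x\in X^T}\ZZ[\Char(T)]$ has the same rank; matching the two then reduces to showing the cokernel of the restriction map is torsion-free, which is the essential content of the theorem either way.
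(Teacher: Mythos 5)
The paper does not prove Theorem~\ref{thm:equilocim}: it is cited from \cite{VezzosiVistoli} (Corollary 5.12) via \cite{FinkSpeyer} (Theorem 2.9), so there is no in-paper argument to compare against. Your sketch follows the standard GKM/Bia\l{}ynicki--Birula template, which is a legitimate route and genuinely different in detail from the approach in Vezzosi--Vistoli. The necessity direction is essentially complete: derived restriction to the $\PP^1$-closure of an orbit reduces the claim to $K^0_T(\PP^1)$, and checking the congruence on the generators $[\mathcal{O}]$ and $[\mathcal{O}(1)]$ is a short computation.

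The gap is in sufficiency, and you flag it yourself. The inductive extension along the BB filtration $Y_0\subset Y_1\subset\cdots\subset Y_m=X$ requires, at each stage, (i) that the restriction-to-fixed-points map stays injective on the (possibly singular) closed strata $Y_i$, which is not an immediate consequence of Theorem~\ref{thm:equiloc}; (ii) the identification $K^T_0(X_{p_i}^+)\cong\ZZ[\Char(T)]$ and the right-exactness of the localization sequence you wrote; and (iii), crucially, that the obstruction to extending a class from $Y_{i-1}$ to $Y_i$ while matching the prescribed value $f(p_i)$ is \emph{exactly} controlled by the one-dimensional-orbit congruences at $p_i$. Point (iii) is where the finiteness hypotheses and the $\PP^1$-closure hypothesis earn their keep: together with smoothness they force the tangent weights at each fixed point to be pairwise non-proportional, so the orbits through $p_i$ exhaust the first-order obstructions with no residual relations; none of this is established in your sketch, so the argument is a plan rather than a proof. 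Your ``rank-counting'' alternative has the same essential content hidden in the torsion-freeness claim. One further technicality worth noting: both the cited result and the BB argument require $X$ to be \emph{smooth}, a hypothesis the theorem here carries only implicitly through its invocation of Theorem~\ref{thm:equiloc}.
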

\begin{example}
	We continue Example \ref{eg:O(d)}. Note that $\PP^n$ has only finitely many one-dimensional torus orbits: for every pair $p_i,p_j$ of $T$-fixed points, there is a unique $T$-orbit whose closure contains $p_i$ and $p_j$. Furthermore, $T$ acts on this orbit with character $t_j^{-1}t_i$. We see that $t_i^d \equiv t_j^d \mod{1-t_j^{-1}t_i}$, so that the class $[\mathcal{O}(d)]^T$ indeed fulfills the condition of Theorem \ref{thm:equilocim}.
\end{example}

We also can describe pullback and pushforward in the language of equivariant localisation. Let $\pi: X \to Y$ be a $T$-equivariant map of smooth projective varieties with finitely many $T$-fixed points, then for $[E]^T \in K_T^0(Y)$, its pullback can be computed by
\begin{equation} \label{eqn:pullback}
(\pi^*[E]^T)(x) = [E]^T(\pi(x))
\end{equation}
for $x \in X^T$.

Describing the pushforward of $[F]^T \in K^0_T(X)$ is a bit more complicated. Suppose that $X$ and $Y$ are contracting. For every point $x \in X^T$, (resp.\ $y \in Y^T$) we pick as before an open neighbourhood $U_x$ (resp.\ $V_y$) on which $T$ acts by characters $\chi_1(x),\ldots, \chi_r(x)$ (resp.\ $\eta_1(y),\ldots, \eta_s(y)$). Then the pushforward of $[F]^T$ is determined by the formula
\begin{equation} \label{eqn:pushforward}
\frac{(\pi_*[F]^T)(y)}{\prod{(1-\eta_j(y)^{-1})}} = \sum_{x \in \pi^{-1}(y) \cap X^T}{\frac{[F]^T(x)}{\prod{(1-\chi_i(x)^{-1})}}} \text{.}
\end{equation}
\begin{remark}
	We can use Theorem \ref{thm:smallTorus} to obtain a description of the ordinary $K$-theory using equivariant localisation. However, one should be careful when using this for computations in practice. Here is a toy example: let $X=\PP^2$ with the usual action of $(\CC^*)^2$. Then $X^T=\{[1:0],[0:1]\}$, and we can write the elements of $K^0_T(X^T) \cong \Maps(X^T,\ZZ[t_0^{\pm},t_1^{\pm}]) \ni f$ as pairs $(f([1:0]),f([0:1]))$. Then $(t_0-t_1,0)$ satisfies the condition from Theorem \ref{thm:equilocim}, hence it gives a class in $K^0_T(X)$. It is tempting to do the following computation in $K^0(X) \cong K^0_T(X) \otimes_{\ZZ[\Char(T)]} \ZZ$:
	\[
	(t_0-t_1,0) \otimes 1 = (1,0) \otimes (1-1) = 0
	\]
	but this is wrong! Indeed, $(1,0)$ does not satisfy the condition from Theorem \ref{thm:equilocim}, hence is not in $K^0_T(X)$. In fact, one can check that $(t_0-t_1,0)$ is the equivariant class of the torus-fixed point $[1:0] \in \PP^2$.
\end{remark}
\subsection{A short review on cones and their Hilbert series} \label{subsec:cones}
For more details about the topic of this subsection we refer to \cite[Section 1.2]{Cox} and \cite[Section 4.5]{Stanley}.
Recall that a \emph{convex polyhedral rational cone} is a subset of $\RR^n$ of the form $C=\Cone(S):=\{\sum_{\bu \in S}{\lambda_{\bu}\bu}|\lambda_{\bu} \in \R+\}$, where $S \subset \ZZ^n \subset \RR^n$ is finite. A cone is called \emph{pointed} if it does not contain a line. If $C$ is a pointed rational cone, then every one-dimensional face $\rho$ contains a unique lattice point $\bu_{\rho}$ that is closest to the origin. It is not hard to see that $MG(C):=\{\bu_{\rho}|\rho \text{ a one-dimensional face of } C \}$ is a minimal generating set of $C$. If the minimal generators are linearly independent over $\RR$, we call $C$ \emph{simplicial}. If they are part of a $\ZZ$-basis of $\ZZ^n$, we call $C$ \emph{regular}.

For a pointed cone $C$ in $\RR^n$, we define its \emph{Hilbert series} $\Hilb(C)$ by:
\[
\Hilb(C) := \sum_{\ba \in C \cap \ZZ^n}{\bt^\ba} \text{.}
\]
This is always a rational function, whose denominator is equal to $\prod_{\bu \in MG(C)}{(1-\bt^{\bu})}$ \cite[Theorem 4.5.11]{Stanley}.

If $C$ is a regular cone, then its Hilbert series is easy to compute: $\Hilb(C)=\prod_{\bu \in MG(C)}{\frac{1}{1-\bt^{\bu}}}$.

If $C$ is a simplicial cone, we can compute its Hilbert series as follows. First compute the finite set $D_C:=\{\bb \in C \cap \ZZ^n : \bb = \sum_{\bb \in MG(C)}{\lambda_\bu\bu}| 0 \leq \lambda_u < 1\}$. Then
\[
\Hilb(C)=(\sum_{\bb \in D_C}{\bt^\bb})\prod_{\bu \in MG(C)}{\frac{1}{1-\bt^{\bu}}}.
\]

For a general rational polydral cone, we can compute its Hilbert series by triangulating it.
\subsection{Matroids and the $K$-theory of Grassmannians}
In this subsection we compute the class in equivariant $K$-theory of a torus orbit closure in a Grassmannian. We then note that this class only depends on the underlying matroid, and give a combinatorial algorithm to get the class in $K$-theory directly from the matroid. This algorithm can then be used as a definition to associate a class in $K$-theory to an arbitrary (not necessarily representable) matroid. This was first done by Fink and Speyer in \cite{FinkSpeyer}.

Let us first fix the following sign conventions. The torus $T=(\CC^*)^n$ acts on $\CC^n$ as follows: $\bt \cdot (x_1,\ldots,x_n) = (t_1^{-1}x_1,\ldots,t_n^{-1}x_n)$. The action of $T$ on $G(k,n)$ is induced from this action. Explicitly, if $p \in G(k,n)$ has Pl\"ucker coordinates $[P_I]_{I\in \binom{[n]}{k}}$, then $\bt \cdot p$ has Pl\"ucker coordinates $[(\prod_{i \in I}{t_i^{-1}})P_I]_{I\in \binom{[n]}{k}}$.

We begin by describing the $T$-equivariant $K$-theory of the Grassmannian $G(k,n)$ using equivariant localisation.

The torus-fixed points of $G(k,n)$ are easy to describe: for every size $k$ subset $I \subset [n]$, we define the $k$-plane $V_I = \Span(\{e_i | i \in I\}) \subset \K^n$, and denote the corresponding point in $G(k,n)$ by $p_I$. In Pl\"ucker coordinates, $p_I$ is given by $P_J=0$ if $J\neq I$. It is easy to see that the $\binom{n}{k}$ points $p_I$ are precisely the torus-fixed points of $G(k,n)$.

We can also describe the one-dimensional torus orbits: there is a (unique) one-dimensional torus orbit between $p_I$ and $p_J$ if and only if $|I \cap J| = k-1$. In this case, we write $I-J=\{i\}$, $J-I=\{j\}$. If we identify the one-dimensional orbit from $p_I$ to $p_J$ with $\AA^1\setminus{0}$ in such a way that the origin corresponds to the torus-fixed point $p_I$ (and so $p_J$ corresponds to the point at infinity), then $T$ acts on the orbit with character $t_j^{-1}t_i$.

Let us now check that the action of $T$ is contracting. We fix a torus-fixed point $p_I$, and  consider the open neighbourhood $U_I$ given by $P_{I} \neq 0$. Then $U_I \cong \AA^{k(n-k)}$. For $p \in U_I$, we will denote its coordinates with $(u_{i,j})_{i \in I, j \notin I}$, where $u_{i,j}=\frac{P_{I-i \cup j}}{P_I}$.  Then $\bt \cdot p$ has coordinates $(t_j^{-1}t_i u_{i,j})_{i \in I, j \notin I}$. Thus, $T$ acts on this space with characters $t_j^{-1}t_i$, where $i \in I, j \notin I$. Identifying $t_1^{a_1}\cdots t_n^{a_n}$ with $(a_1,\ldots,a_n)$, all these points lie in the open halfspace defined by $\sum_{i \in I}{a_i} > 0$. 


\begin{example}
	We compute the class of $\mathcal{O}(1)$. The sheaf $\mathcal{O}(1)$ on $G(k,n)$ is the pullback of $\mathcal{O}(1)$ on $\PP^{\binom{n}{k}-1}$ via the Pl\"ucker embedding. We can also describe $\mathcal{O}(1)$ as $\bigwedge^k S^{\vee}$, where $S$ is the tautological vector bundle on $G(k,n)$ whose fiber over a point is the corresponding $k$-plane.\\
	Using the result from Example \ref{eg:O(d)} (with a different torus action, induced from the action on the Pl\"ucker coordinates) and the pullback formula (\ref{eqn:pullback}), we find that the class $[\mathcal{O}(1)]^T$ in equivariant $K$-theory is the map
	\[
	[\mathcal{O}(1)]^T : Gr(k,n)^T \to \ZZ[\Char(T)]:
	p_I \mapsto t_{i_1}\cdots t_{i_k},
	\]
	where we wrote $I = \{i_1, \ldots, i_k\}$.
\end{example}

Let $p$ be a point in $Gr(k,n)$ and $M=M_p$ be the corresponding matroid. Then $\overline{Tp}$ is a closed subvariety of $Gr(k,n)$; in particular, it is given by a coherent sheaf. We want to compute its class in $T$-equivariant $K$-theory, which is a map $[\overline{Tp}]^T: Gr(k,n)^T \to \ZZ[\Char(T)]$. As above, let $p_I \in Gr(k,n)^T$ be the torus-invariant point given by $P_J=0$ for $J \neq I$, and let $U_I$ be the affine open neighbourhood $U_I$ of $p_I$ defined by $P_I = 1$.

If $I$ is not a basis of $M$, then $\overline{Tp}$ does not intersect $U_I$, hence $[\overline{Tp}]^T(p_I)=0$. Hence, we will assume that $I$ is a basis of $M$, i.e.\ that $p \in U_I$.


The coordinate ring of $\overline{Tp} \cap U_I$ is isomorphic to $\K[s_i^{-1}s_j]$, where $s_i^{-1}s_j$ is a generator if and only if $(I - i) \cup j$ is a basis of $M$. We will denote this ring by $R_{M,I}$.
This ring should be viewed as a $T$-module, with $\bt \cdot s_i^{-1}s_j = t_i^{-1}t_j s_i^{-1}s_j$.
The Hilbert series of $R_{M,I}$ is a rational function with denominator dividing $\prod_{i\in I}\prod_{j\notin I}(1-t_i^{-1}t_j)$. Thus, by (\ref{eqn:KclassE}),
\begin{equation} \label{eqn:Kmatroid1}
[\overline{Tp}]^T(p_I) = \Hilb(R_{M,I})\prod_{i\in I}\prod_{j\notin I}(1-t_i^{-1}t_j) \text{.}
\end{equation}
\begin{definition} \label{def:conevp}
	For any lattice polytope $P$ and $v$ a vertex of $P$, we define $\Cone_v(P)$ to be the cone spanned by all vectors of the form $u-v$ with $u \in P$. For $I \in \binom{[n]}{k}$, we write $\Cone_I(M) := \Cone_{\be_I}(P(M))$ if $I$ is a basis of $M$, and $\Cone_I(M) := \emptyset$ otherwise.
\end{definition}
Since  $\Cone_I(M)$ is the positive real span of all vectors $e_J-e_I$, where $J \in \mathcal{B}(M)$, we find that $\Hilb(R_{M,I}) =  \Hilb(\Cone_I(M))$.
So (\ref{eqn:Kmatroid1}) can also be written as
\begin{equation} \label{eqn:Kmatroid2}
[\overline{Tp}]^T(p_I) = \Hilb(\Cone_I(M))\prod_{i\in I}\prod_{j\notin I}(1-t_i^{-1}t_j) \text{.}
\end{equation}
We note that (\ref{eqn:Kmatroid2}) only depends on the matroid $M$ and not on the chosen point $p$ or even the torus orbit $\overline{Tp}$. Moreover, the formulas make sense even for non-representable matroids. Thus we can use them as a definition for the class in $K$-theory for a matroid:
\begin{definition}[{\kern-0.5em \cite{FinkSpeyer}}]
	For any rank $k$ matroid $M$ on $[n]$, we define $y(M): Gr(k,n)^T \to \ZZ[\Char(T)]$ by
	\[
	y(M)(p_I) = \Hilb(\Cone_I(M))\prod_{i\in I}\prod_{j\notin I}(1-t_i^{-1}t_j) \text{.}
	\]
\end{definition}
\begin{theorem}[{\kern-0.5em \cite[Proposition 3.3]{FinkSpeyer}}]
	The class $y(M) \in K^0_T(Gr(k,n)^T)$ satisfies the condition of Theorem \ref{thm:equilocim}, and hence defines a class in $K^0_T(Gr(k,n))$.
\end{theorem}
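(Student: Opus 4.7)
The plan is to check the condition of Theorem \ref{thm:equilocim} directly. Every one-dimensional $T$-orbit in $G(k,n)$ joins two torus-fixed points $p_I, p_J$ with $|I\cap J|=k-1$; writing $I-J=\{i\}$ and $J-I=\{j\}$, the orbit carries the character $\chi=t_j^{-1}t_i$. I must show
\[
y(M)(p_I)\equiv y(M)(p_J)\pmod{1-t_j^{-1}t_i},
\]
and the verification splits into cases according to which of $I,J$ are bases of $M$.

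The trivial case is when neither $I$ nor $J$ is a basis: then both sides vanish. If exactly one of them, say $I$, is a basis, then $y(M)(p_J)=0$ and I must show $(1-t_j^{-1}t_i)\mid y(M)(p_I)$. Because $J=(I-i)\cup j$ is not a basis, by Theorem \ref{thm:edges} the vector $\be_j-\be_i$ is not the direction of an edge of $P(M)$ emanating from $\be_I$, so it is not a primitive ray of the tangent cone $\Cone_I(M)$. Consequently $(1-t_i^{-1}t_j)$ does not divide the denominator of $\Hilb(\Cone_I(M))$ written in lowest terms: this denominator is a product of factors $(1-t_a^{-1}t_b)$ indexed by the primitive ray generators $\be_b-\be_a$ of $\Cone_I(M)$, and for distinct pairs $(a,b)$ these factors are pairwise coprime in $\ZZ[\Char(T)]$. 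Hence the factor $(1-t_i^{-1}t_j)$ appearing in $\prod_{a\in I,b\notin I}(1-t_a^{-1}t_b)$ survives in $y(M)(p_I)$, and since $(1-t_i^{-1}t_j)=-t_i^{-1}t_j(1-t_j^{-1}t_i)$, the required divisibility follows.

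The main case is when both $I$ and $J$ are bases, so that $[\be_I,\be_J]$ is an edge of $P(M)$ parallel to $\be_j-\be_i$. Setting $C=I\cap J$ and $D=[n]-(I\cup J)$, a direct expansion shows that the truncated products $\prod_{(a,b)\neq(i,j),\,a\in I,\,b\notin I}(1-t_a^{-1}t_b)$ and $\prod_{(a,b)\neq(j,i),\,a\in J,\,b\notin J}(1-t_a^{-1}t_b)$ each split into a $C\times D$-piece, a $C$-piece, and a $D$-piece, and these matching pieces become literally equal after substituting $t_i=t_j$. The content of the claim therefore reduces to the Hilbert-series identity
\[
\Hilb(\Cone_I(M))\,(1-t_i^{-1}t_j)\big|_{t_i=t_j}\;=\;\Hilb(\Cone_J(M))\,(1-t_j^{-1}t_i)\big|_{t_i=t_j}.
\]
Each side has a clean geometric meaning: since $\be_j-\be_i$ is a primitive ray of $\Cone_I(M)$ (and $\be_i-\be_j$ of $\Cone_J(M)$), multiplying by $(1-t_i^{-1}t_j)$ cancels the associated pole, and specialising at $t_i=t_j$ yields the Hilbert series of the image $\pi(\Cone_I(M))$ under the lattice quotient $\pi\colon\ZZ^n\to\ZZ^n/\ZZ(\be_j-\be_i)$; the analogous statement holds for $J$. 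The step I expect to be the main obstacle is therefore the polyhedral identity $\pi(\Cone_I(M))=\pi(\Cone_J(M))$. I plan to handle this via the classical fact that the transverse tangent cone of a polytope along an edge is independent of the endpoint: for $p$ in the relative interior of $[\be_I,\be_J]$, the tangent cone $\Cone_p(P(M))$ contains $\RR(\be_j-\be_i)$ and coincides with both $\Cone_I(M)+\RR(\be_j-\be_i)$ and $\Cone_J(M)+\RR(\be_j-\be_i)$, which yields the identity after quotienting by $\RR(\be_j-\be_i)$. Combined with the matching of the auxiliary products this finishes the proof.
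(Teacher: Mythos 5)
The paper itself does not supply a proof of this statement: it is recorded as a cited theorem, deferring entirely to Fink--Speyer \cite[Proposition~3.3]{FinkSpeyer}. Your argument is a correct reconstruction of what is, in essence, their proof. The case split (neither, exactly one, or both of $I,J$ a basis), the reduction of the ``both bases'' case to a Hilbert-series identity after matching up the extraneous factors of $\prod(1-t_a^{-1}t_b)$, and the identification of both specialised Hilbert series with that of the common quotient cone $\pi(\Cone_I(M))=\pi(\Cone_J(M))$ under $\ZZ^n\to\ZZ^n/\ZZ(\be_j-\be_i)$ all line up with their treatment. The one geometric fact doing the real work --- that the tangent cones at the two endpoints of an edge agree after modding out by the edge direction, because both recover the tangent cone of $P(M)$ along the interior of that edge --- is exactly the right lemma, and you supply it cleanly.

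A few small points worth tightening if you were to write this up in full. In the ``exactly one is a basis'' case, the pairwise-coprimality claim should be phrased with care: $1-t_a^{-1}t_b$ and $1-t_b^{-1}t_a$ are associates, not coprime; what saves you is that every ray of $\Cone_I(M)$ has the form $\be_b-\be_a$ with $a\in I$, $b\notin I$, so only one orientation of each unordered pair can occur, and the relevant divisors $\{t_a=t_b\}$ are genuinely distinct. In the ``both bases'' case you should record explicitly that $[\be_I,\be_J]$ really is an edge of $P(M)$ (two bases differing by a single exchange lie on a common edge --- this follows, for instance, by maximising $\sum_{c\in I\cap J}x_c$ over $P(M)$, which cuts out a simplex having $\be_I,\be_J$ as vertices). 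Finally, when passing from the equality of real image cones to the equality of their lattice-point Hilbert series, one should note that $\pi(\Cone_I(M)\cap\ZZ^n)$ equals $\pi(\Cone_I(M))\cap(\ZZ^n/\ZZ(\be_j-\be_i))$ (lift any lattice point of the image and translate by a large multiple of $\be_j-\be_i$, which lies in the cone); this is easy but should not be left implicit. None of these affect the correctness of the strategy.
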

\subsection{Flag matroids and the $K$-theory of flag varieties}
In this section, we generalize the results from the previous section replacing ``matroids" by ``flag matroids" and ``Grassmannians" by ``flag varieties".

We first describe the equivariant $K$-theory of a flag variety $Fl(\bk,n)$.
The torus-fixed points are given as follows: for every (set-theoretic) flag $F=(F_1 \subseteq \ldots \subseteq F_s)$ of rank $\bk$ on $[n]$, we define a (vector space) flag $V_1 \subseteq \ldots \subseteq V_s$ by $V_i = \Span(\{e_j | j \in F_i\})$. We will denote the corresponding point in $Fl(\bk,n)$ by $p_F$. The Pl\"ucker coordinates of $p_F$ are given by $P_S=1$ if $S$ is a constituent of $F$ and $P_S=0$ otherwise. Here, the Pl\"ucker coordinates of a point in $Fl(\bk,n)$ are the ones induced from the embedding $Fl(\bk,n) \hookrightarrow \prod{G(k_i,n)}$.

We can also describe the one-dimensional torus orbits: let $p_F$ be a torus-fixed point. We define $S(F)$ to be the set of all pairs $(i,j) \in [n] \times [n]$ for which there exists an $\ell$ such that $i \in F_{\ell}$ and $j \notin F_{\ell}$.  For every $(i,j) \in S(F)$, we define a new flag $F'=F_{i \to j}$ by switching the roles of $i$ and $j$. More precisely: if $i \in F_r$ but $j \notin F_r$, then $F'_r := (F_r - i) \cup j$; in any other case $F'_r := F_r$. Then there is a unique one-dimensional torus orbit between $p_F$ and $p_{F'}$, and all one-dimensional torus orbits arise in this way. 
The torus $T$ acts on this orbit with character $t_j^{-1}t_i$.

\begin{lemma}
	The action of $T$ on $Fl(\bk,n)$ is contracting.
\end{lemma}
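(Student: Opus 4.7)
The plan is to exhibit, at each torus-fixed point $p_F$, an explicit affine neighbourhood $U_F\cong\AA^N$ with $N=\dim Fl(\bk,n)$, compute the characters by which $T$ acts on this chart, and then produce a linear functional on $\ZZ^n=\Char(T)$ that is strictly positive on every such character.

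To construct $U_F$, extend $F=(F_1\subset\cdots\subset F_s)$ by $F_0=\emptyset$, $F_{s+1}=[n]$, and for $i\in[n]$ let $l_i$ be the unique index with $i\in F_{l_i}\setminus F_{l_i-1}$. Take $U_F\subset Fl(\bk,n)$ to be the open locus where each $V_l$ projects isomorphically onto $\CC^{F_l}$ under the quotient $\CC^n\twoheadrightarrow\CC^{F_l}$; this is cut out by the non-vanishing of the Pl\"ucker coordinate at $F_l$ on each constituent Grassmannian. For a flag in $U_F$ and each $i\in F_s$, there is then a unique vector $v_i\in V_{l_i}$ having $e_i$-coefficient $1$ and vanishing $e_{i'}$-coefficient for all $i'\in F_{l_i}\setminus\{i\}$, and one writes
\[
v_i \;=\; e_i + \sum_{j:\,l_j>l_i} a_{ij}\, e_j.
\]
The assignment $(V_\bullet)\mapsto(a_{ij})$ is an algebraic bijection between $U_F$ and $\AA^N$ with $N=\#\{(i,j):l_i<l_j\}=\sum_{l=1}^s(k_l-k_{l-1})(n-k_l)$, which one checks agrees with both $|S(F)|$ and $\dim Fl(\bk,n)$.

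For the weights, the action $\bt\cdot e_r=t_r^{-1}e_r$ sends $v_i$ to $t_i^{-1}(e_i+\sum_j t_it_j^{-1}a_{ij}e_j)$; rescaling back into staircase form gives $a_{ij}\mapsto t_it_j^{-1}a_{ij}$, so $a_{ij}$ has character $t_j^{-1}t_i$, i.e.\ the lattice point $e_i-e_j\in\ZZ^n$. Finally, define $\ell\colon\ZZ^n\to\RR$ by $\ell(e_r)=-l_r$. For every pair $(i,j)$ parameterising $U_F$ one has $l_i<l_j$, hence $\ell(e_i-e_j)=l_j-l_i>0$, so all weights lie in the open halfspace $\{\ell>0\}$; equivalently, they generate a pointed cone, and the $T$-representation on $U_F$ is contracting in the sense of Definition \ref{def:contracting}.

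The only substantive verification is that the staircase parameterisation is genuinely an isomorphism of varieties $\AA^N\cong U_F$, not merely a set-theoretic bijection. Both directions are manifestly algebraic --- the forward map via the polynomial formulas for Pl\"ucker coordinates in the $(a_{ij})$, the inverse by solving a triangular linear system inside each $V_l$ to recover $v_i$ --- so this is bookkeeping rather than a genuine obstacle; in the language of Lie theory $U_F$ is the ``big cell'' at $p_F$ in the $G/P$-description of $Fl(\bk,n)$.
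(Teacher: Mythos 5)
Your proof takes essentially the same route as the paper's: you use the same affine chart $U_F$ cut out by non-vanishing of the Pl\"ucker coordinates at the constituents of $F$, you obtain the same collection of characters $t_j^{-1}t_i$ for $(i,j)\in S(F)$, and your linear functional $\ell(e_r)=-l_r$ agrees, up to an irrelevant constant shift of all coordinates, with the paper's functional $\sum_{r=1}^s\sum_{i\in F_r}a_i$ when evaluated on vectors of the form $e_i-e_j$. The only cosmetic difference is that you parameterise $U_F$ by staircase matrix entries $a_{ij}$ rather than directly by Pl\"ucker ratios $P_{F_r - i\cup j}/P_{F_r}$; both give the same $T$-weights, and your dimension count correctly matches the paper's.
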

\begin{proof}
	For every torus-fixed point $p_F$, we consider the open neighbourhood $U_F$ given by $P_{F_r} \neq 0$ for all $r$. Then $U_F \cong \AA^N$, where $N = \dim(Fl(\bk,n)) = \sum_{i=1}^s{k_i(k_{i+1}-k_i)}$ (here $k_{s+1}:=n$).
	We will denote the coordinates of a point $q$ in $U_F$ by $(u_{i,j})_{(i,j) \in S(F)}$, where $u_{i,j}=\frac{P_{F_{r}-i \cup j}}{P_{F_{r}}}$ for any $r$ which satisfies $i \in F_{r}$ and $j \notin F_{r}$. Then $\bt \cdot q$ has coordinates $(t_j^{-1}t_i u_{i,j})_{(i,j) \in S(F)}$.
	So $T$ acts on $U_F$ with characters $t_j^{-1}t_i$, $(i,j) \in S(F)$. 
	As before, identifying $t_1^{a_1} \cdots t_n^{a_n}$ with $(a_1,\ldots,a_n)$, all these characters lie on the open halfspace $\sum_{r = 1}^{s} \sum_{i \in F_r}{a_i} >0$.
\end{proof}

Let $p$ be a point in $Fl(\bk,n)$, and let $\mathcal{F}$ be the corresponding flag matroid. 
We want to compute the $T$-equivariant class of $\overline{Tp}$, which is a map $[\overline{Tp}]^T: Fl(\bk,n)^T \to \ZZ[\Char(T)]$.
We fix a point $p_F \in Fl(\bk,n)^T$, and consider the affine neighbourhood $U_F$ described above.

If $F$ is not a basis of $\mathcal{F}$, then $\overline{Tp}$ does not intersect $U_F$, hence $[\overline{Tp}]^T(p_F)=0$. Thus, we will assume that $F$ is a basis of $\mathcal{F}$, i.e.\ that $p \in U_F$.

The coordinate ring of $\overline{Tp} \cap U_F$ is isomorphic to $k[s_i^{-1}s_j]$, where $s_i^{-1}s_j$ is a generator if and only if $F_{i \to j} \in \mathcal{F}$.
We will denote this ring by $R_{\mathcal{F},F}$. This ring should be viewed as a $T$-module, with $\bt \cdot s_i^{-1}s_j = t_i^{-1}t_j s_i^{-1}s_j$. The Hilbert series of this $T$-module is a rational function with denominator dividing $\prod_{(i,j) \in S(F)}(1-t_i^{-1}t_j)$. Thus, by (\ref{eqn:KclassE}),
\begin{equation} \label{eq:Kflagmatroid1}
[\overline{Tp}]^T(p_F) = \Hilb(R_{\mathcal{F},F})\prod_{(i,j) \in S(F)}(1-t_i^{-1}t_j) \text{.}
\end{equation}
\begin{definition}
	We define $\Cone_F(\mathcal{F})$ to be the cone $\Cone_{\be_F}(P(\mathcal{F}))$, as in Definition \ref{def:conevp}.
\end{definition}
As before, we find that $\Hilb(R_{\mathcal{F},F}) =  \Hilb(\Cone_F(\mathcal{F}))$.
Hence, (\ref{eq:Kflagmatroid1}) can also be written as
\begin{equation} \label{eq:Kflagmatroid2}
[\overline{Tp}]^T(p_F) = \Hilb(\Cone_F(\mathcal{F}))\prod_{(i,j) \in S(F)}(1-t_i^{-1}t_j) \text{.}
\end{equation}
As before, (\ref{eq:Kflagmatroid2}) only depends on the flag matroid $\mathcal{F}$ and not on the chosen point $p$ or even the torus orbit $\overline{Tp}$. Moreover the formulas make sense even for non-representable flag matroids. Thus we can use them as a definition for the class in $K$-theory for a flag matroid:
\begin{definition} \label{def:Kclassflag}
For any rank $\bk$ flag matroid $\mathcal{F}$ on $[n]$, we define $y(\mathcal{F}): Fl(\bk,n)^T \to \ZZ[\Char(T)]$ by
\[
y(\mathcal{F})(p_F) = \Hilb(\Cone_F(\mathcal{F}))\prod_{(i,j) \in S(F)}(1-t_i^{-1}t_j) \text{.}
\]
\end{definition}
\begin{prop}
	The class $y(\mathcal{F}) \in K^0_T(Fl(\bk,n)^T)$ satisfies the condition of Theorem \ref{thm:equilocim}, and hence defines a class in $K^0_T(Fl(\bk,n))$.
\end{prop}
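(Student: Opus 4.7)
The plan is to verify directly that $y(\mathcal{F})$ satisfies the congruence condition of Theorem \ref{thm:equilocim}, following the strategy of Fink and Speyer for ordinary matroids. The $T$-fixed points of $Fl(\bk,n)$ are the $p_F$ and the one-dimensional orbits are precisely those connecting $p_F$ to $p_{F_{i\to j}}$ with character $t_j^{-1}t_i$, for $(i,j)\in S(F)$, so the task is to prove
\[
y(\mathcal{F})(p_F) \equiv y(\mathcal{F})(p_{F_{i\to j}}) \pmod{1-t_j^{-1}t_i}
\]
for every flag $F$ and every $(i,j)\in S(F)$.

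The first key step is to observe that $y(\mathcal{F})(p_F)$ is a genuine Laurent polynomial. By Theorem \ref{thm:flagMatroidBasePolytope} every edge of $P(\mathcal{F})$ emanating from $\be_F$ is parallel to some $\be_b-\be_a$, so every primitive ray generator of the tangent cone $\Cone_F(\mathcal{F})$ has this form. Using the Minkowski decomposition $P(\mathcal{F})=P(M_1)+\cdots+P(M_s)$ one checks that such a direction can only arise when $a\in F_\ell$ and $b\notin F_\ell$ for some constituent, i.e.\ when $(a,b)\in S(F)$: otherwise the corresponding translate of some $\be_{F_\ell}$ would leave $P(M_\ell)\subseteq [0,1]^n$ immediately. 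Consequently every factor in the denominator of $\Hilb(\Cone_F(\mathcal{F}))$ already appears in the product of Definition \ref{def:Kclassflag}, so after cancellation $y(\mathcal{F})(p_F)$ is a Laurent polynomial, divisible moreover by $(1-t_a^{-1}t_b)$ for every $(a,b)\in S(F)$ such that $\be_b-\be_a$ is not a ray of $\Cone_F(\mathcal{F})$.

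The degenerate cases now follow at once. If neither $F$ nor $F':=F_{i\to j}$ is a basis of $\mathcal{F}$, both values are zero. If $F\in\mathcal{F}$ but $F'\notin\mathcal{F}$, then $\be_j-\be_i$ is not a ray of $\Cone_F(\mathcal{F})$, so $(1-t_i^{-1}t_j)\mid y(\mathcal{F})(p_F)$ by the previous paragraph; using $1-t_i^{-1}t_j=-t_i^{-1}t_j(1-t_j^{-1}t_i)$ this yields $y(\mathcal{F})(p_F)\equiv 0\equiv y(\mathcal{F})(p_{F'})\pmod{1-t_j^{-1}t_i}$, and the opposite asymmetric case is symmetric.

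The principal case is when both $F$ and $F'$ are bases of $\mathcal{F}$; then $\be_F$ and $\be_{F'}$ are joined by an edge of $P(\mathcal{F})$ of length $c:=|\{\ell:i\in F_\ell,\, j\notin F_\ell\}|\geq 1$ in direction $\be_j-\be_i$, and $\be_j-\be_i$ (respectively $\be_i-\be_j$) is a primitive ray generator of $\Cone_F(\mathcal{F})$ (respectively $\Cone_{F'}(\mathcal{F})$). To compare the two sides modulo $1-t_j^{-1}t_i$ I substitute $t_i=t_j$: the pole of $\Hilb(\Cone_F(\mathcal{F}))$ along its $\be_j-\be_i$-ray exactly cancels the vanishing factor $(1-t_i^{-1}t_j)$ in the product, leaving, up to a unit, the Hilbert series of the facet of $\Cone_F(\mathcal{F})$ transverse to that ray, multiplied by the remaining factors indexed by $S(F)\setminus\{(i,j)\}$. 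An analogous expression is obtained for $y(\mathcal{F})(p_{F'})$. The main obstacle is to identify these two residues: geometrically the two transverse facets are related by translation by $c(\be_j-\be_i)$ along the collapsing edge, an operation that becomes trivial after $t_i=t_j$; combinatorially the pairs in $S(F)\setminus\{(i,j)\}$ and $S(F')\setminus\{(j,i)\}$ match up via the bijection exchanging $(a,i)\leftrightarrow(a,j)$ and $(i,a)\leftrightarrow(j,a)$, which also becomes the identity after this substitution. Matching the resulting Hilbert series and products factor by factor yields the required congruence and completes the proof.
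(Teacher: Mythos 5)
Your strategy is the intended one---the paper's own proof is just ``a straightforward generalisation of Fink--Speyer Proposition~3.3''---and your first two paragraphs correctly establish the needed groundwork: every primitive ray of $\Cone_F(\mathcal{F})$ has the form $\be_b-\be_a$ with $(a,b)\in S(F)$, so $y(\mathcal{F})(p_F)$ is a Laurent polynomial, and moreover it is divisible by $(1-t_a^{-1}t_b)$ for every $(a,b)\in S(F)$ whose direction $\be_b-\be_a$ is \emph{not} a ray of $\Cone_F(\mathcal{F})$.

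The case split in your third paragraph has a genuine gap. You assert that if $F$ and $F'=F_{i\to j}$ are both bases then $\be_F$ and $\be_{F'}$ are joined by an edge of $P(\mathcal{F})$. That is true for ordinary matroid polytopes, but it fails for flag matroids once $c\geq 2$: for the uniform flag matroid of rank $(1,2)$ on $[3]$ the base polytope is the hexagon whose vertices are the permutations of $(2,1,0)$; taking $F=(\{1\}\subset\{1,2\})$ and $(i,j)=(1,3)$ gives $F'=(\{3\}\subset\{2,3\})$, both bases, yet $\be_F=(2,1,0)$ and $\be_{F'}=(0,1,2)$ are opposite, non-adjacent vertices, and $\be_3-\be_1$ is not a ray of $\Cone_F(\mathcal{F})$, so your residue computation does not apply. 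The fix uses only what you already have: split instead on whether $\be_j-\be_i$ is a ray of $\Cone_F(\mathcal{F})$. This condition is symmetric under $(F,i,j)\leftrightarrow(F',j,i)$, because the only rank-$\bk$ vectors on the line $\be_F+\RR(\be_j-\be_i)$ are $\be_F$ and $\be_{F'}$, so a ray in one tangent cone forces an edge of $P(\mathcal{F})$ joining $\be_F$ to $\be_{F'}$ and hence the matching ray in the other. When $\be_j-\be_i$ is not a ray of $\Cone_F(\mathcal{F})$ (in particular whenever $F\notin\mathcal{F}$), your divisibility observation gives $y(\mathcal{F})(p_F)\equiv 0\pmod{1-t_j^{-1}t_i}$, and likewise for $F'$, so the congruence is automatic; your residue argument is then only needed in the edge case, and there it works as you describe. (One further minor point: for non-simplicial $\Cone_F(\mathcal{F})$ the phrase ``Hilbert series of the facet transverse to the ray'' should be replaced by the Hilbert series of the image of $\Cone_F(\mathcal{F})$ under the lattice quotient by $\ZZ(\be_j-\be_i)$; the rest of your matching of factors goes through unchanged.)
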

\begin{proof}
	The proof is a straightforward generalisation of the proof of {\cite[Proposition 3.3]{FinkSpeyer}}.
\end{proof}
\begin{example} \label{eg:flagmatroidclass}
	Let $\mathcal{F}$ be the flag matroid of Example \ref{eg:flagmatroid}. We first compute $y(\mathcal{F})(p_F)$, where $F$ is the flag $2 \subseteq 12$ (so $e_F=(1,2,0)$). From Figure \ref{fig:flagMatroidPolytope}, it is clear that $\Cone_F(\mathcal{F})=\Cone((1,-1,0),(0,-1,1))$, which has Hilbert series $\frac{1}{(1-t_2^{-1}t_1)(1-t_2^{-1}t_3)}$. Furthermore, we have $S(F)=\{(2,1),(2,3),(1,3)\}$. We find that $y(\mathcal{F})(p_F) = 1-t_1^{-1}t_3$. We can do the same for the other torus-fixed points. The result is summarised in Figure \ref{fig:flagMatroidPolytope2}.
	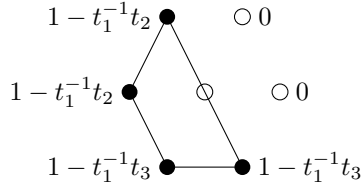
\begin{figure}[h!]
		\begin{tikzpicture}
		\coordinate (a) at (0,0);
		\coordinate (b) at (1,0);
		\coordinate (c) at (-0.5,1);
		\coordinate (d) at (0.5,1);
		\coordinate (e) at (1.5,1);
		\coordinate (f) at (0,2);
		\coordinate (g) at (1,2);
		\node (1) at (-0.9,0) {$1-t_1^{-1}t_3$};
		\node (2) at (-1.4,1) {$1-t_1^{-1}t_2$};
		\node (3) at (-0.9,2) {$1-t_1^{-1}t_2$};
		\node (4) at (1.9,0) {$1-t_1^{-1}t_3$};
		\node (5) at (1.8,1) {0};
		\node (6) at (1.3,2) {0};
		\draw (a) -- (c) -- (f)--(b)--(a);
		\foreach \pt in {a,b,c,f} \fill[black] (\pt) circle (3pt);
		\foreach \pt in {e,d,g}  \draw (\pt) circle (3pt);
		
		\end{tikzpicture}
		\caption{The class in $K$-theory of a flag matroid} \label{fig:flagMatroidPolytope2}
	\end{figure}
\end{example}

\subsection{The Tutte polynomial via $K$-theory} \label{sec:Ktutte}
In \cite{FinkSpeyer}, a geometric description of the Tutte polynomial of a matroid is given. Consider the following diagram, where all the maps are natural projections or inclusions:
\begin{center}
	\begin{tikzcd}
		& Fl(1,k,n-1;n) \arrow[ld,"\pi_k"'] \arrow[rd] \arrow[rdd,"\pi_{1(n-1)}"'] &  \\
		G(k,n) & & Fl(1,n-1;n) \arrow[d,hook] \\
		& & \PP^{n-1} \times \PP^{n-1}\simeq G(1,n)\times G(n-1,n)
	\end{tikzcd}
\end{center}
It is known that $K^0(\PP^{n-1} \times \PP^{n-1}) \cong \ZZ[\alpha,\beta]/(\alpha^n,\beta^n)$, where $\alpha$ and $\beta$ are the structure sheaves of hyperplanes.
\begin{theorem}[\kern-0.5em  {\cite[Theorem 7.1]{FinkSpeyer}}] The following equality holds:
	\[
	(\pi_{1(n-1)})_*\pi_d^*(Y(M)\cdot[\mathcal{O}(1)]) = T_M(\alpha,\beta),
	\]
	where $Y(M)$ is the class associated to the matroid $M$ in the non-equivariant K-theory of the Grassmannian.
\end{theorem}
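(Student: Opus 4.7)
The plan is to compute the right-hand side via equivariant localisation and then descend to ordinary $K$-theory. First I would replace $Y(M)$ by its equivariant refinement $y(M) \in K^0_T(G(k,n))$ (Definition \ref{def:Kclassflag}) and take the equivariant Plücker class $[\mathcal{O}(1)]^T$ whose value at the fixed point $p_I$ is $\prod_{\ell \in I} t_\ell$ (as in Example \ref{eg:O(d)}, transferred through the Plücker embedding). By Theorem \ref{thm:smallTorus}, it suffices to prove a $T$-equivariant version of the identity and then specialise $t_i \to 1$ to obtain the desired statement in $K^0(\PP^{n-1}\times\PP^{n-1})=\ZZ[\alpha,\beta]/(\alpha^n,\beta^n)$.

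The torus-fixed points of $Fl(1,k,n-1;n)$ are triples $(\{i\},I,[n]-\{j\})$ with $i\in I$, $j\notin I$, $|I|=k$, while those of $Fl(1,n-1;n)$ are pairs $(\{i\},[n]-\{j\})$ with $i \neq j$. By the pullback formula \eqref{eqn:pullback}, the class $\pi_d^*\bigl(y(M)\cdot [\mathcal{O}(1)]^T\bigr)$ takes the value $y(M)(p_I)\prod_{\ell\in I}t_\ell$ on a triple, which vanishes unless $I$ is a basis of $M$. Next, the fibre of $\pi_{1(n-1)}$ over $p_{(\{i\},[n]-\{j\})}$ consists precisely of the triples with the prescribed $i,j$; by the pushforward formula \eqref{eqn:pushforward}, the value of the pushed-forward class at $p_{(\{i\},[n]-\{j\})}$ is then a sum over bases $B$ of $M$ with $i\in B$, $j\notin B$ of a rational function whose numerator is $\Hilb(\Cone_B(M))\prod_{\ell\in B} t_\ell$ and whose denominator records the tangent characters at the fibre point and at the image point. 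Most of the factors $(1-t_a^{-1}t_b)$ cancel in pairs (the tangent characters at a fibre point and at the corresponding base point share many of the same directions), leaving a compact expression indexed by $(i,j)$.

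To descend to ordinary $K$-theory I would invoke Theorem \ref{thm:smallTorus} together with the identifications $\alpha = 1-t_i^{-1}$ at the first-factor fixed point labelled by $i$ and $\beta = 1-t_j^{-1}$ at the second-factor fixed point labelled by $j$. The apparent singularities of the pushforward at $t_i=1$ cancel against the cone Hilbert series (whose denominator equals the tangent contribution by construction of $y(M)$), and the resulting expression is a genuine polynomial in $\alpha$ and $\beta$ of bidegree at most $(n-1,n-1)$, i.e.\ a well-defined element of $K^0(\PP^{n-1}\times \PP^{n-1})$.

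The final and most delicate step is to identify this polynomial with $T_M(\alpha,\beta)$. The cleanest combinatorial route is to expand each $\Hilb(\Cone_B(M))$ as a sum over lattice points and reorganise the double sum (over $(i,j)$ and over bases in the fibre) according to the internal and external activity of each basis $B$ with respect to some fixed linear order on $E$; the activity-based formula $T_M(x,y)=\sum_B x^{I(B)}y^{E(B)}$ then appears directly. A second, more structural route is to verify that the left-hand side satisfies the deletion-contraction recurrence of Lemma \ref{recurrance} together with the base cases at loops and coloops: this is made geometric by the fact that $P(M\backslash e)$ and $P(M/e)$ are opposite facets of $P(M)$ cut out by fixing the $e$-coordinate, so $y(M)$ decomposes accordingly. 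The principal obstacle lies here: the cone Hilbert series mixes contributions from many subsets $S\subseteq E$ at once, so turning the geometric formula into the clean combinatorial form of Definition \ref{dfn:Tutte} demands either a careful activity bookkeeping or a slightly delicate induction.
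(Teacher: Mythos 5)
The paper does not prove this statement; it cites \cite[Theorem~7.1]{FinkSpeyer} and uses it as a black box to motivate Definition~\ref{def:Kclassflag} and the flag-matroid analogue. So there is no ``paper's proof'' to compare against, only the cited one.

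Your high-level framework — lift $Y(M)$ to its equivariant refinement $y(M)$, compute $\pi_k^*$ and $(\pi_{1(n-1)})_*$ by localisation using \eqref{eqn:pullback} and \eqref{eqn:pushforward}, then specialise to ordinary $K$-theory via Theorem~\ref{thm:smallTorus} — is exactly the setup Fink and Speyer use, and your description of the fixed points of $Fl(1,k,n-1;n)$ and of the fibres of $\pi_{1(n-1)}$ (bases $B$ with $i\in B$, $j\notin B$) is correct. But the proposal stops precisely at the step where the theorem actually lives, and the two routes you sketch for that step do not go through as stated. The activity route is a mismatch of shapes: the localisation pushforward is a sum over pairs $(i,j)$ of cone Hilbert series, while $\sum_B x^{I(B)}y^{E(B)}$ is a single sum over bases with exponents depending on a fixed linear order; no reorganisation ``according to internal and external activity'' is indicated that would turn one into the other, and Fink--Speyer do not take this route (their argument is an Ehrhart/lattice-point-counting computation of Euler characteristics $\chi(\mathcal{O}(a,b)\otimes -)$, matched against the corank-nullity expansion). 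The deletion-contraction route has a more structural problem: $y(M\backslash e)$ and $y(M/e)$ live in $K^0_T(G(k,n-1))$ and $K^0_T(G(k-1,n-1))$ respectively, not in $K^0_T(G(k,n))$, so the facet decomposition of $P(M)$ does not give a $K$-theoretic identity $y(M)=y(M\backslash e)+y(M/e)$ or anything of that shape without first constructing comparison maps between these ambient varieties — none of which is set up. Finally, the identification ``$\alpha=1-t_i^{-1}$ at the fixed point labelled $i$'' conflates the equivariant and non-equivariant pictures: $\alpha$ is a global class in $K^0(\PP^{n-1})$ obtained only after tensoring with $\ZZ$ over $\ZZ[\Char(T)]$; there is no pointwise substitution that produces it, and the cautionary remark following Theorem~\ref{thm:equilocim} in the paper is exactly about the danger of such pointwise specialisations. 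In short: the scaffolding is right, but the load-bearing combinatorial identity is missing and the two proposed shortcuts would each need to be replaced by the actual Fink--Speyer argument.
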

In other words, the Tutte polynomial of a matroid can be viewed as a Fourier-Mukai transform of its associated class in $K$-theory.

We can now now generalize this construction to get a \emph{definition} of the Tutte polynomial of a flag matroid.

\begin{definition}
	Consider the following diagram.
	\begin{center}
		\begin{tikzcd}
			& Fl(1,\bk,n-1;n) \arrow[ld,"\pi_{\bk}"'] \arrow[rd] \arrow[rdd,"\pi_{1(n-1)}"'] &  \\
			Fl(\bk;n) & & Fl(1,n-1;n) \arrow[d,hook] \\
			& & \PP^{n-1} \times \PP^{n-1}
		\end{tikzcd}
	\end{center}
Let $\mathcal{F}$ be a flag matroid on $[n]$ of rank $\bk$, and let $Y(\mathcal{F})\in K^0(Fl(\bk;n))$ be its class in non-equivariant $K$-theory, as in Definition \ref{def:Kclassflag}. Then the \emph{$K$-theoretic Tutte polynomial} of $\mathcal{F}$ is defined to be
\[
T_{\mathcal{F}}(\alpha,\beta):=(\pi_{1(n-1)})_*\pi_d^*(Y(\mathcal{F})\cdot[\mathcal{O}(1)]).
\]
\end{definition}
We computed the Tutte polynomial for some small examples using Sage \cite{sage}, Macaulay2 \cite{M2}, and Normaliz \cite{Normaliz}. Our code is available on \url{software.mis.mpg.de}. The program first computes the equivariant class $(\pi_{1(n-1)})_*\pi_d^*(y(M)\cdot[\mathcal{O}(1)]) \in K^0_T(\PP^{n-1} \times \PP^{n-1})$ using equivariant localisation, and then computes the underlying non-equivariant class.\\
 \begin{example} \label{eg:Tutte1}
 	We consider again the flag matroid of Examples \ref{eg:flagmatroid} and \ref{eg:flagmatroidclass}. We first compute $y(\mathcal{F})\cdot[\mathcal{O}(1)]$, which is displayed in figure \ref{fig:flagTutte}.\\
 		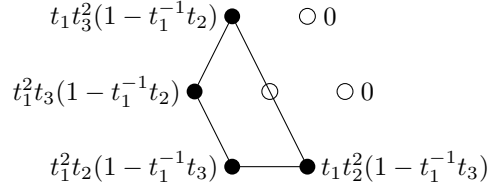
\begin{figure}[h!]
 		\begin{tikzpicture}
 		\coordinate (a) at (0,0);
 		\coordinate (b) at (1,0);
 		\coordinate (c) at (-0.5,1);
 		\coordinate (d) at (0.5,1);
 		\coordinate (e) at (1.5,1);
 		\coordinate (f) at (0,2);
 		\coordinate (g) at (1,2);
 		\node (1) at (-1.3,0) {$t_1^2t_2(1-t_1^{-1}t_3)$};
 		\node (2) at (-1.8,1) {$t_1^2t_3(1-t_1^{-1}t_2)$};
 		\node (3) at (-1.3,2) {$t_1t_3^2(1-t_1^{-1}t_2)$};
 		\node (4) at (2.3,0) {$t_1t_2^2(1-t_1^{-1}t_3)$};
 		\node (5) at (1.8,1) {0};
 		\node (6) at (1.3,2) {0};
 		\draw (a) -- (c) -- (f)--(b)--(a);
 		\foreach \pt in {a,b,c,f} \fill[black] (\pt) circle (3pt);
 		\foreach \pt in {e,d,g}  \draw (\pt) circle (3pt);
 		
 		\end{tikzpicture}
 		\caption{$y(\mathcal{F})\cdot[\mathcal{O}(1)]$} \label{fig:flagTutte}
 	\end{figure}
 The two projections from $Fl(1,1,2,2;3)$ to $Fl(1,2;3)$ are isomorphisms, hence pulling back and pushing forward along them does nothing. Next we need to push our class $X=y(\mathcal{F})\cdot[\mathcal{O}(1)] \in K^0_T(Fl(1,2;3))$ to a class $Y \in K^0_T(\PP^{n-1} \times \PP^{n-1})$, using formula (\ref{eqn:pushforward}).

 The $T$-fixed points of $\PP^{n-1} \times \PP^{n-1}$ are given by pairs $p=(\ell,H)$, where $\ell \in G(1,3)^T =\{\langle e_1\rangle ,\langle e_2\rangle,\langle e_3\rangle \}$ and $H \in G(2,3)^T = \{\langle e_1,e_2 \rangle,\langle e_1,e_3 \rangle, \langle e_2,e_3 \rangle\}$. If $\ell \not\subset H$, then $Y(p)=0$.
 If $\ell \subset H$, then $p \in Fl(1,2;3) \subset \PP^2 \times \PP^2$. Since we are pushing forward along an embedding, the formula (\ref{eqn:pushforward}) has a simple form: we can find characters $\chi_1,\chi_2,\chi_3,\eta$ and open neighbourhoods $p \ni U_1 \subset Fl(1,2;3)$ and $p \ni U_2 \subset \PP^2 \times \PP^2$, such that $T$ acts on $U_1$ with characters $\chi_1,\chi_2,\chi_3$, and on $U_2$ with characters $\chi_1,\chi_2,\chi_3,\eta$. Then (\ref{eqn:pushforward}) becomes:\[
  Y(p) = (1-\eta^{-1})X(p).
 \]

 Consider for example $p=(\langle e_1\rangle,\langle e_1,e_2 \rangle)$. Then $p \in Fl(1,2;3)$ has an open neighbourhood where $T$ acts by characters $t_2t_3^{-1},t_1t_3^{-1},t_1t_2^{-1}$, while $p \in \PP^{2} \times \PP^{2}$ has an open neighbourhood where $T$ acts by characters $t_2t_3^{-1},t_1t_3^{-1},t_1t_3^{-1},t_1t_2^{-1}$. We compute that
 \[
 Y((\langle e_1\rangle,\langle e_1,e_2 \rangle)) =t_1^2t_2(1-t_1^{-1}t_3)(1-t_1^{-1}t_3)=t_2(t_1-t_3)^2.
 \]

 Similarily, we find that
 \begin{align*}
 Y((\langle e_1\rangle,\langle e_1,e_3 \rangle)) &= t_3(t_1-t_2)^2\\
 Y((\langle e_3\rangle,\langle e_1,e_3 \rangle)) &= t_3(t_1-t_2)(t_3-t_2)\\
 Y((\langle e_2\rangle,\langle e_1,e_2 \rangle)) &= t_2(t_1-t_3)(t_2-t_3)
 \end{align*}
 and $Y(p)=0$ in all other cases.

 Finally, we need to find the underlying class in non-equivariant $K$-theory. This is quite tedious to do by hand, so we just refer to the algorithm provided in \url{software.mis.mpg.de} for this.
 In the end, we find that
 \[
 T_{\mathcal{F}}(x,y) = x^2y^2 + x^2y + xy^2 + x^2 + xy.
 \]
 \end{example}
\begin{example} \label{eg:Tutte2}
As another example, consider the uniform flag matroid $\mathcal{U}_{(2,3);5}$ of rank $(2,3)$ on $[5]$ (that is, the constituents of $\mathcal{U}_{2,3}$ are the uniform matroids $U_{2,5}$ and $U_{3,5}$). Using our program, we find that its $K$-theoretic Tutte polynomial $T_{\mathcal{U}_{(2,3):5}}(x,y)$ equals
\[
x^3y^3 + 2x^3y^2 + 2x^2y^3 + 3x^3y + 8x^2y^2 + 3xy^3 + 4x^3 + 8x^2y + 8xy^2 + 4y^3 + 2x^2 + 4xy + 2y^2.
\]
\end{example}

\section{Open Problems}
Our definition of the Tutte polynomial of a flag matroid is admittedly quite involved. It is natural to wonder whether there is an easier definition, avoiding geometry:
\begin{problem}
	Is there a purely combinatorial description of the $K$-theoretic Tutte polynomial of a flag matroid? In particular, can one obtain the $K$-theoretic Tutte polynomial from the Tutte polynomials of the constituents?
\end{problem}
We can also ask which properties of (the coefficients of) the Tutte polynomial for matroids carry over to polymatroids. For example:
\begin{conjecture}
	The coefficients of the $K$-theoretic Tutte polynomial of a flag matroid are always nonnegative integers.
\end{conjecture}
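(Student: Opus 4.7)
The plan is to combine computational verification, a reduction to the matroid case via the Minkowski-sum structure of $P(\mathcal{F})$, and, as a fallback, a direct combinatorial expansion. As a starting point I would extend Examples \ref{eg:Tutte1} and \ref{eg:Tutte2} by running the authors' algorithm at \url{software.mis.mpg.de} on all flag matroids with $|E|$ up to some small bound. Beyond ruling out easy counterexamples, this may expose a pattern (for instance, a positive expansion of $T_{\mathcal{F}}$ in monomials built from the constituents) that guides the proof.

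The main line of attack would be to relate $Y(\mathcal{F})$ to the classes $Y(M_i)$ of its constituents, exploiting that $P(\mathcal{F})=P(M_1)+\cdots+P(M_s)$ and that $\Cone_F(\mathcal{F})$ at a vertex $F$ splits compatibly as a (Minkowski) sum of the cones $\Cone_{F_i}(M_i)$ at the constituent vertices. Using the projections $q_i:Fl(\bk;n)\to G(k_i,n)$, one would attempt to prove an identity of the form
\[
Y(\mathcal{F}) \;=\; \prod_{i=1}^{s} q_i^{*}\,Y(M_i) \qquad \text{in } K^{0}\bigl(Fl(\bk;n)\bigr)
\]
by checking it vertex by vertex via equivariant localisation, using the formula of Definition \ref{def:Kclassflag}. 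Granting such an identity, functoriality of pullback/pushforward together with the Fink--Speyer theorem (identifying $T_{M_i}$ with the classical Tutte polynomial, which is manifestly nonneg with integral coefficients) would reduce the conjecture to a statement about expanding products of classical Tutte polynomials on $\PP^{n-1}\times\PP^{n-1}$, which is a routine check in $\ZZ[\alpha,\beta]/(\alpha^{n},\beta^{n})$ once one tracks how the multiplication by $[\mathcal{O}(1)]$ distributes across the factors.

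The hard part will be the $K$-theoretic identity itself. In the representable case one has $\overline{Tp}\subseteq\bigcap_i q_i^{-1}(\overline{Tp_i})$, but Example \ref{eg:nonrepflag} shows the inclusion is strict for non-representable flag matroids, and even in the representable case the intersection need not be transverse; so any proof must be combinatorial, manipulating Hilbert series of cones rather than structure sheaves of orbit closures. If this $K$-theoretic Künneth approach stalls, the fallback is to develop a flag-matroid analogue of basis activity that is compatible with the concordance relations $M_i \leftarrow M_{i+1}$, and to prove an expansion
\[
T_{\mathcal{F}}(x,y) \;=\; \sum_{F\in\mathcal{F}} x^{I(F)} y^{E(F)}
\]
directly, mirroring Tutte's classical activity formula. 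Here the obstruction is the absence of a canonical notion of ``smallest element in a unique cocircuit'' for flags; making a consistent choice compatible with all constituents simultaneously, and matching it to the geometric pushforward, is the principal difficulty along this combinatorial route.
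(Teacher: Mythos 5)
The statement you were asked to prove is one of the paper's open conjectures (in the final section ``Open Problems''); the paper offers no proof, only the two sample computations in the preceding examples. So there is nothing to compare against directly, but I can assess whether your strategy could work.

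Your main line of attack rests on the identity $Y(\mathcal{F}) = \prod_i q_i^*\,Y(M_i)$ in $K^0\bigl(Fl(\bk;n)\bigr)$, and this identity is \emph{false}. Here is a small counterexample, readable directly off the localisation formula for $y(\mathcal{F})$. Take $n=4$, let $M_1$ be the rank-$1$ matroid on $[4]$ with bases $\{1\},\{2\}$ (so $3,4$ are loops), and let $M_2$ be the rank-$2$ matroid on $[4]$ with bases $\{1,2\},\{1,3\},\{2,3\}$ (so $4$ is a loop). One checks that $M_1$ is a quotient of $M_2$, so $(M_1,M_2)$ determines a flag matroid $\mathcal{F}$ of rank $(1,2)$. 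At the fixed point $p_F$ with $F=(\{1\}\subset\{1,2\})$ one computes $\Cone_F(\mathcal{F})=\Cone\bigl((0,-1,1,0),(-1,1,0,0)\bigr)$, $S(F)=\{(1,2),(1,3),(1,4),(2,3),(2,4)\}$, and hence
\[
y(\mathcal{F})(p_F) = (1-t_1^{-1}t_3)(1-t_1^{-1}t_4)(1-t_2^{-1}t_4),
\]
whereas the constituents give $y(M_1)(p_{\{1\}})=(1-t_1^{-1}t_3)(1-t_1^{-1}t_4)$ and $y(M_2)(p_{\{1,2\}})=(1-t_1^{-1}t_4)(1-t_2^{-1}t_4)$, so
\[
q_1^*y(M_1)(p_F)\cdot q_2^*y(M_2)(p_F) = (1-t_1^{-1}t_3)(1-t_1^{-1}t_4)^2(1-t_2^{-1}t_4).
\]
The factor $(1-t_1^{-1}t_4)$ is overcounted because the pair $(1,4)$ lies in $S(F_1)\cap S(F_2)$: in general $S(F)=\bigcup_i S(F_i)$ as a \emph{set}, not a multiset, while the proposed product treats it as a multiset. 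This is not an equivariant artifact either. In the even simpler case where $\mathcal{F}$ has two equal constituents $M_1=M_2=M$ with $M$ the rank-$1$ matroid on $[3]$ with bases $\{1\},\{2\}$, the flag variety is $\PP^2$, $Y(\mathcal{F})=Y(M)$ is the class $\alpha$ of the line $\{x_3=0\}$, but $q_1^*Y(M)\cdot q_2^*Y(M)=\alpha^2$, the class of a point; these differ already in $K^0(\PP^2)=\ZZ[\alpha]/(\alpha^3)$.

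Even if a corrected K\"unneth-type identity were available, the claimed reduction would not be ``routine'': pushforward along $\pi_{1(n-1)}$ does not commute with products, so $(\pi_{1(n-1)})_*\pi_{\bk}^*\bigl(\prod_i q_i^*Y(M_i)\cdot[\mathcal{O}(1)]\bigr)$ is not a product or a simple combination of the individual $T_{M_i}(\alpha,\beta)$; moreover the localisation pushforward formula involves alternating signs, so positivity of the inputs does not formally propagate — this is precisely why positivity is already nontrivial in the single-matroid case of Fink and Speyer. Your first step (systematic computation via the provided software) is sensible and is essentially what the paper itself does to produce supporting evidence. Your fallback (a flag-matroid activity statistic matching the geometric pushforward) is closer in spirit to what a proof would likely require, but as written it is a research programme rather than an argument, and you correctly identify the obstruction (no canonical choice of internal/external activity compatible with all the constituents at once).
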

For matroids, one can define the \emph{characteristic polynomial} (also called \emph{chromatic polynomial}, as it generalises the chromatic polynomial of a graph) by
\[
\chi_M(\lambda) = (-1)^{r(M)} T_M(1-\lambda,0).
\]
In 2015, Adiprasito, Huh and Katz proved the following conjecture by Rota-Heron-Welsh:
 \begin{theorem}[\kern-0.5em {\cite{AdiprasitoHuhKatz} }] \label{thm:AHK}
 	Let $w_i(M)$ be the absolute value of the coefficient of $\lambda^{r(M)-i}$ in the characteristic polynomial of $M$. Then the sequence $w_i(M)$ is log-concave:
 	\[
 	w_{i-1}(M)w_{i+1}(M) \leq w_i(M)^2 \text{ for all } 1 \leq i < r(M).
 	\]
 \end{theorem}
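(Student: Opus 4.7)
The plan is to follow the strategy of Adiprasito, Huh and Katz: realise the coefficients $w_i(M)$ as intersection numbers in a combinatorially defined graded ring $A^{\bullet}(M)$, and then derive log-concavity as a formal consequence of Hodge--Riemann bilinear relations on this ring. The toy model to keep in mind is log-concavity of $\deg(\alpha^{r-1-i}\beta^i)$ on a smooth projective surface: once the Hodge index theorem is available, the restriction of the intersection form to the plane spanned by $\alpha$ and $\beta$ has signature $(1,1)$, forcing the corresponding $2\times 2$ Gram determinant to be nonpositive. The whole proof is an elaborate combinatorial transplantation of this one-line argument.

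Concretely, I would associate to a loopless matroid $M$ of rank $r(M)$ the Feichtner--Yuzvinsky \emph{Chow ring}
\[
A^{\bullet}(M):=\ZZ[x_F:F\text{ a nonempty proper flat of }M]/(I_M+J_M),
\]
where $I_M=(x_Fx_{F'}:F,F'\text{ incomparable})$ and $J_M$ is generated by the linear forms $\sum_{F\ni i}x_F-\sum_{F\ni j}x_F$ for $i,j\in E$. This is a graded ring concentrated in degrees $0,\ldots,r(M)-1$. The first main step is to establish the \emph{K\"ahler package} for $A^{\bullet}(M)$: a degree isomorphism $\deg:A^{r(M)-1}(M)\xrightarrow{\sim}\ZZ$ inducing a perfect pairing (Poincar\'e duality); for every strictly submodular class $\ell\in A^1(M)$ a hard Lefschetz isomorphism $A^k(M)\to A^{r(M)-1-k}(M)$ via multiplication by $\ell^{r(M)-1-2k}$; and the Hodge--Riemann form $(x,y)\mapsto(-1)^k\deg(\ell^{r(M)-1-2k}xy)$ being positive definite on the primitive part of $A^k(M)$, together with the mixed version of this statement.

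The second step is the combinatorial identification. Setting $\alpha:=\sum_{F\ni 0}x_F$ and $\beta:=\sum_{F\not\ni 0}x_F$ in $A^1(M)$ (after adjoining a distinguished element $0$ to the ground set as in Huh--Katz), a direct computation with the defining relations shows
\[
\mu^i(M)=\deg(\alpha^{r(M)-1-i}\beta^i)
\]
for the reduced Whitney numbers $\mu^i(M)$ defined by $\chi_M(\lambda)/(\lambda-1)=\sum_{i=0}^{r(M)-1}(-1)^i\mu^i(M)\lambda^{r(M)-1-i}$. A formal application of the mixed Hodge--Riemann inequalities to the plane $\langle\alpha,\beta\rangle\subset A^1(M)$ then yields
\[
\mu^{i-1}(M)\,\mu^{i+1}(M)\leq\mu^i(M)^2,
\]
and the identity $w_k(M)=\mu^k(M)+\mu^{k-1}(M)$ (obtained by expanding $\chi_M=(\lambda-1)\overline{\chi}_M$) transfers log-concavity from the $\mu$'s to the $w$'s by an elementary inequality manipulation.

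The hard part is the first step: $M$ need not be realisable, so $A^{\bullet}(M)$ is not a priori the cohomology of any smooth projective variety, and one cannot import the K\"ahler package from complex geometry. The AHK solution is the technology of \emph{matroid flips}: order the flats of $M$ and factor the identity on $A^{\bullet}(M)$ through a finite sequence of intermediate rings indexed by order filters, each transition being the combinatorial analogue of a blow-up/blow-down along a smooth centre and modelled on the decomposition theorem for semismall maps. One then propagates Poincar\'e duality, hard Lefschetz, and --- crucially --- the precise Hodge--Riemann signature from the base case (the Boolean matroid, whose Chow ring is that of a product of projective lines) through each flip by a delicate simultaneous induction. Verifying that the Hodge--Riemann signature survives every flip, including the borderline case of degenerate ample classes where several submodular parameters collide, is the technical heart of the proof.
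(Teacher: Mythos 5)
The paper states Theorem \ref{thm:AHK} as an external result, citing \cite{AdiprasitoHuhKatz}, and supplies no proof of its own; there is therefore no in-paper argument to compare against. Your proposal is a faithful high-level outline of the Adiprasito--Huh--Katz proof: you correctly name the Feichtner--Yuzvinsky Chow ring $A^{\bullet}(M)$, the combinatorial K\"ahler package (Poincar\'e duality, hard Lefschetz, Hodge--Riemann) established through the matroid-flip induction starting from the Boolean matroid, the identification of the reduced Whitney numbers with the mixed degrees $\deg(\alpha^{r(M)-1-i}\beta^i)$, and the elementary passage from log-concavity of the $\mu^i$ to that of the $w_i$ via $w_k=\mu^k+\mu^{k-1}$. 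One small inaccuracy: no distinguished element is adjoined to the ground set. In the AHK setup one simply fixes $i\in E$ and sets $\alpha=\sum_{F\ni i}x_F$ and $\beta=\sum_{F\not\ni i}x_F$; the linear relations generating $J_M$ guarantee these classes are independent of the choice of $i$, so $\alpha,\beta\in A^1(M)$ are well defined without any extension of $E$. As you honestly acknowledge, the proposal remains a summary rather than a self-contained proof --- essentially all the content lies in verifying that the Hodge--Riemann signature is preserved across each flip, including the degenerate boundary cases --- but as an outline it accurately reflects the structure of the published argument and correctly locates its technical heart.
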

Since we now have a definition for the Tutte polynomial of a flag matroid, we can define the \emph{characteristic polynomial} of a rank $\bk$ flag matroid $\mathcal{F}$ by
\[
\chi_{\mathcal{F}}(\lambda) = (-1)^{r(\mathcal{F})} T_{\mathcal{F}}(1-\lambda,0),
\]
where $r(\mathcal{F}) := |\bk| := \sum{k_i}$.
\begin{conjecture} \label{conj:AHK}
	Theorem \ref{thm:AHK} holds for the characteristic polynomial of an arbitrary flag matroid.
\end{conjecture}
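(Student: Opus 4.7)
The plan is to attempt this conjecture by extending the Hodge-theoretic strategy of Adiprasito--Huh--Katz from matroids to flag matroids, using the geometric description of $T_{\mathcal{F}}$ given in Section \ref{sec:Ktutte} as the bridge.

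First I would look for a purely combinatorial reduction to the matroid case. Given a flag matroid $\mathcal{F}$ with constituents $M_1,\ldots,M_s$, the first question is whether $\chi_{\mathcal{F}}(\lambda)$ admits a factorisation or convolution in terms of the $\chi_{M_i}(\lambda)$ (which are log-concave by Theorem \ref{thm:AHK}). I would test this on the examples computed in Examples \ref{eg:Tutte1} and \ref{eg:Tutte2}, and more generally with the authors' Macaulay2/Normaliz implementation. If such a decomposition exists, log-concavity could then be transferred using the theory of Lorentzian polynomials of Br\"{a}nd\'{e}n--Huh, which is closed under many natural operations (products, Hadamard products, certain convolutions). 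In particular, one could attempt to prove that the bivariate polynomial $T_{\mathcal{F}}(x,y)$ itself is Lorentzian after a suitable normalisation, which would immediately yield log-concavity of $\chi_{\mathcal{F}}$ along with stronger conclusions.

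If no such combinatorial reduction is available, I would pursue a direct Hodge-theoretic route. The goal would be to construct a graded $\QQ$-algebra $A^*(\mathcal{F})$, analogous to the Feichtner--Yuzvinsky Chow ring used in \cite{AdiprasitoHuhKatz}, such that the absolute coefficients $w_i(\mathcal{F})$ arise as intersection numbers $\deg(\alpha^i \beta^{r(\mathcal{F})-i})$ for two explicit classes $\alpha,\beta \in A^1(\mathcal{F})$. The natural candidate is the Chow ring of the toric variety associated to the normal fan of the flag matroid polytope $P(\mathcal{F})$; since $P(\mathcal{F})$ is the Minkowski sum of the $P(M_i)$, this fan is a common refinement of the Bergman fans of the constituents, which suggests that many flat/flip combinatorics carry over. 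The classes $\alpha,\beta$ should be read off from the Fourier--Mukai diagram defining $T_{\mathcal{F}}$: they are the pullbacks of the two hyperplane classes on $\PP^{n-1}\times\PP^{n-1}$ via $\pi_{1(n-1)} \circ \pi_{\bk}^{-1}$. Once the identification of $w_i(\mathcal{F})$ with intersection numbers is in place, log-concavity follows by the standard signature argument from the Hodge--Riemann bilinear relations.

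The main obstacle will be establishing the K\"{a}hler package (Poincar\'{e} duality, hard Lefschetz, Hodge--Riemann) for $A^*(\mathcal{F})$. In the matroid case this rests on a delicate induction via matroid flips, and no such local combinatorial modification theory is currently available for flag matroids. Two partial attacks look reasonable: (i) handle the representable case first, where $\overline{Tp}\subset Fl(\bk,n)$ is an actual projective variety and hard Lefschetz on a smooth resolution (or on the ambient flag variety, by Deligne) can be invoked directly, and then try to extend to arbitrary flag matroids by valuative/deformation arguments using Theorem \ref{thm:sumofpoly}; (ii) exploit the quotient relations $M_i \twoheadleftarrow M_{i+1}$ to run an outer induction on $s$, reducing the K\"{a}hler package for $A^*(\mathcal{F})$ to that of a single constituent (handled by \cite{AdiprasitoHuhKatz}) together with a ``fibration'' statement relating the Chow rings. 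The fibration step is where I expect the hardest work to lie, since it requires understanding how Hodge--Riemann behaves under the Minkowski sum decomposition of matroid polytopes.
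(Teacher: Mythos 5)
This statement is a \emph{conjecture} in the paper, not a theorem: the authors offer no proof, only a numerical verification on the two small examples from Examples \ref{eg:Tutte1} and \ref{eg:Tutte2}. Your submission is likewise not a proof but a research programme --- you explicitly hedge with ``if such a decomposition exists,'' ``if no such combinatorial reduction is available,'' and you yourself flag the K\"ahler package as an open obstacle. So there is nothing to compare at the level of completed arguments; the conjecture remains open.

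That said, there is one concrete confusion in your plan worth correcting. You propose taking $A^*(\mathcal{F})$ to be ``the Chow ring of the toric variety associated to the normal fan of the flag matroid polytope $P(\mathcal{F})$,'' and you assert that this fan ``is a common refinement of the Bergman fans of the constituents.'' This conflates two genuinely different constructions. The Feichtner--Yuzvinsky Chow ring used in \cite{AdiprasitoHuhKatz} is built from the Bergman fan (the fan of chains of flats, living in the tropical linear space), not from the normal fan of the matroid base polytope. These fans live in different ambient spaces and encode different combinatorics; the normal fan of a Minkowski sum is the common refinement of the normal fans of the summands, not of their Bergman fans. The paper itself warns about exactly this: ``the varieties and Chow rings studied by Adiprasito, Huh and Katz are not the same as those we introduce in this article.'' Any Hodge-theoretic attack along your second route would first have to identify the correct flag-matroid analogue of the Bergman fan, and then show that the $K$-theoretic $w_i(\mathcal{F})$ --- which a priori come from a Fourier--Mukai transform on $Fl(\bk,n)$, not from an intersection theory on a Bergman-type fan --- actually arise as degrees of monomials $\alpha^i\beta^{r(\mathcal{F})-i}$ in that ring. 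That translation step is itself nontrivial even in the matroid case and is nowhere addressed in your proposal.

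Your first route (look for a factorisation of $\chi_{\mathcal{F}}$ through the constituent $\chi_{M_i}$, then apply Lorentzian polynomial machinery) is a sensible thing to test computationally, but as stated it is pure speculation: no candidate formula is offered, and the paper gives no evidence that such a decomposition exists. Until you either exhibit such a formula on a family of examples or build the correct Chow-ring analogue, this remains a plan, not a proof.
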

In Examples \ref{eg:Tutte1} and \ref{eg:Tutte2}, the characteristic polynomials are $-\lambda^2 + 2\lambda - 1$ and $4\lambda^3 - 14\lambda^2 + 16\lambda - 6$, respectively. Thus, we see that Conjecture \ref{conj:AHK} holds for these examples.

Flag matroids are a special class of Coxeter matroids. Hence, another possible direction of research would be:
\begin{problem}
	Explore how our constructions and results could be generalised to arbitrary Coxeter matroids.
\end{problem}
Flag matroids can also be viewed as a special class of polymatroids. In particular, we can apply the construction of Section \ref{TuttePoly} to them.
\begin{problem}
	Is there a connection between the Tutte polynomial of a polymatroid, as defined by Cameron and Fink, and our construction of the Tutte polynomial of flag matroid? How about the construction from Remark \ref{rmk:polymatroidmatroid}? 
\end{problem}
Next, we note that by Section \ref{sec:repPoly} we know how to associate to a rank $k$ representable $r$-polymatroid on $[n]$ a class in $K^0_T(G(k,rn))$.
\begin{problem}
	Can we associate a class in $K^0_T(G(k,rn))$ to a non-representable $r$-polymatroid?
Can this be used to define a $K$-theoretic Tutte polynomial for $r$-polymatroids?
\end{problem}
Finally, we could apply the construction of Section \ref{sec:Ktutte} to \emph{any} subvariety of a Grassmannian (or even a flag variety), not just to torus orbits. It could be interesting to study the properties of this invariant.
\bibliography{FlagMatroids}{}
\bibliographystyle{plain}
\end{document}